\newtheorem{theorem}{Theorem}[section]
\newtheorem{lemma}[theorem]{Lemma}
\newtheorem{proposition}[theorem]{Proposition}
\newtheorem{corollary}[theorem]{Corollary}
\newtheorem{remark}[theorem]{Remark}
\newcommand{\To}{\longrightarrow}
\newcommand{\N}{\mathbb{N}}
\newcommand{\Z}{\mathbb{Z}}
\newcommand{\R}{\mathbb{R}}
\newcommand{\B}{\mathscr{B}}
\newcommand{\ww}[1]{\widetilde{#1}}
\begin{document}
	\title{{\bf Equilibrium states which are not Gibbs measure on hereditary subshifts} \thanks{$\mathsection$ School of Mathematical Sciences and Institute of Mathematics, Nanjing Normal University, Nanjing 210093, P.R.China. Email: zjlin137@126.com; School of Mathematical Sciences and Institute of Mathematics, Nanjing Normal University, Nanjing 210093, P.R.China, and Center of Nonlinear Science, Nanjing University, Nanjing 210093, P.R.China. Email: ecchen@njnu.edu.cn.}}
    \author{Zijie Lin and Ercai Chen$^\mathsection$ }
    \date{}
	\maketitle

\begin{abstract}
In this paper, we consider which kind of invariant measure on hereditary subshifts is not Gibbs measure. For the hereditary closure of a subshift $(X,S)$, we prove that in some situation, the invariant measure $\nu*B_{p,1-p}$ can not be a Gibbs measure where $\nu$ is an invariant measure on $(X,S)$. As an application, we show that for some $\B$-free subshifts, the unique equilibrium state $\nu_\eta*B_{p,1-p}$ is not Gibbs measure.
\end{abstract}

\section{Introduction}
Recall that a subshift $(X, S)$ is a subsystem of full shift $(\{0,1\}^\Z, \sigma)$ where $\{0,1\}^\Z=\{(x_i)_{i\in\Z}: x_i\in\{0,1\}\}$ and $\sigma: \{0,1\}^\Z\rightarrow \{0,1\}^\Z$ with $\sigma((x_i)_{i\in\Z})=(x_{i+1})_{i\in\Z}$. It means that $X$ is a closed $\sigma$-invariant subset of $\{0,1\}^\Z$ and $S=\sigma |_X$. Denote by $\mathcal{M}(X,S)$ (resp. $\mathcal{M}^e(X, S)$) the set of all the Borel $S$-invariant (resp. ergodic $S$-invariant) probability measure on $(X, S)$.

For a subshift $(X, S)$, recall that the set of all the $n$-length \emph{word} is the set $\mathcal{L}_n(X)=\{W=[w_0w_1\cdots w_{n-1}]: \text{ there exists }x\in X,\quad x_i=w_i\text{ for }i=0,1,...,n-1\}$ and the \emph{language} is the set $\mathcal{L}(X)=\bigcup_{n\in\N}\mathcal{L}_n(X)$. For each word $W\in\mathcal{L}(X)$, denote by $|W|$ the length of the word $W$, that is, $|W|=n$ if and only if $W\in\mathcal{L}_n(X)$. For a word $W\in\mathcal{L}(X)$ or a point $x\in X$, let $W[i,j]=[w_i\cdots w_j]$ and $x[i,j]=[x_i\cdots x_j]$ for any suitable $i\le j$. Each word $W$ also stands for the corresponding cylinder set $W=\{x\in X: x[0, |W|-1]=W\}$ with the same denotation. For any word $W$, define $\#_1W=\#\{1\le i<|W|: w_i=1\}$.

For two words $W=[w_0\cdots w_{n-1}],W'=[w'_0\cdots w'_{n-1}]\in \mathcal{L}_n(X)$, we call $W\le W'$ if $w_i\le w'_i$ for each $i=0,1,...,n-1$. Also, for two points $x=(x_i)_{i\in\Z},y=(y_i)_{i\in\Z}\in X$, we call $y\le x$ if $y_i\le x_i$ for each $i\in\Z$. The subshift $(X, S)$ is \emph{hereditary} if for any $W\in\mathcal{L}(X)$ and any $W'\le W$, the word $W'\in\mathcal{L}(X)$. Define the \emph{hereditary closure} of $(X,S)$ by
$$\widetilde{X}=\{y\in \{0,1\}^\Z: \text{there exists } x\in X \text{ such that } y\le x\}.$$
It follows that $(X,S)$ is hereditary if and only if $\widetilde{X}=X$. Examples of hereditary subshift include many $\B$-free systems, introduced in Section \ref{Bsys}. The basic properties of hereditary shifts are showed in \cite{Ker07, Kwi13}. In \cite{Prz15}, J.K.-Przymus, M. Lema\'nczyk and B. Weiss studied the invariant measure on $\B$-free subshifts. In \cite{Dym18}, A. Dymek, S. Kasjan, J.K.-Przymus and M. Lema\'nczyk studied entropy and intrinsic ergodicity of $\B$-free subshifts.

Equilibrium states play an important role on complicated physical systems. Bowen\cite{Bow75} and Ruelle\cite{Rue78} have studied the existence of equilibrium states for continuous functions on shifts of finite type. In \cite{Gur98}, the authors show that an equilibrium state exists if and only if the function is positive recurrent, and in this case the equilibrium state is unique. In \cite{Sar99}, the author shows the existence of equilibrium states for H\"older continuous positive recurrent functions for which the Ruelle--Perron--Frobenius operator maps the constant function $1$ to a bounded function.

Gibbs measures have strong relationship with the equilibrium states. The idea of Gibbs measures comes from statistical physics(\cite{Lan73,Rue99}). The basic properties of Gibbs measures were introduced in \cite{Bow08,Sin72}. In \cite{Bow75,Rue78}, the authors proved the existence of Gibbs measures on topological Markov shifts. In \cite{Mau01}, Mauldin and Urba\'nski found sufficient topological conditions for the existence of Gibbs measures. In \cite{Sar03}, the author showed that Mauldin and Urba\'nski's sufficiency result can be derived from the generalized Ruelle's Perron--Frobenius theorem of \cite{Sar01}, and gave a new proof of their result.

In \cite{JM20}, J.K.-Przymus and M. Lema\'nczyk proved that for some of hereditary subshifts, the maximal entropy measure does not have the Gibbs property(See details in \cite{JM20}).
This work motivates us to consider that for hereditary subshifts, when the equilibrium state is not Gibbs measure.

\begin{theorem}\label{t:1.1}
  For the hereditary closure $(\ww{X},S)$ of a subshift $(X,S)$, non-atomic measure $\nu\in\mathcal{M}^e(X,S)$ with $D_\nu=D$, $\kappa=\nu*B_{q,1-q}\in\mathcal{M}^e(\ww{X},S)$ with some $0<q<1$, and a continuous function $\ww{\phi}: \ww{X}\rightarrow \R$ with $D^{\ww{\phi}}_\kappa=D^{\ww{\phi}}$. If $$\ww{P}\le(\mathrm{Var}\ww{\phi}([0])-\log (1-q)-\mathrm{Var}\ww{\phi}([1]))d+d^{\ww{\phi}}-\mathrm{Var}\ww{\phi}([0]),$$
  $$\sup\ww{\phi}([1])\ge\sup\ww{\phi}([0]),$$
  and
  $$\mathrm{Var}\ww{\phi}([1])\le\mathrm{Var}\ww{\phi}([0])-\log (1-q),$$
   then $\kappa$ is not the Gibbs measure for $\ww{\phi}$.
\end{theorem}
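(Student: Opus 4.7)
The plan is to argue by contradiction. Suppose $\kappa$ is a Gibbs measure for $\ww{\phi}$ with Gibbs constant $C>0$ and pressure $\ww{P}$, so that
$$C^{-1}\exp\!\bigl(-n\ww{P}+S_n\ww{\phi}(x)\bigr)\le \kappa(\ww{W})\le C\exp\!\bigl(-n\ww{P}+S_n\ww{\phi}(x)\bigr)$$
holds for every $\ww{W}\in\mathcal{L}_n(\ww{X})$ and every $x\in\ww{W}$. The goal is to evaluate the Gibbs bounds on two carefully chosen families of cylinders and to combine them with the convolution identity $\kappa=\nu*B_{q,1-q}$, producing a strict lower bound on $\ww{P}$ that violates the first displayed hypothesis of the theorem.

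The essential computational tool is the convolution formula
$$\kappa(\ww{W})=\sum_{\substack{W\in\mathcal{L}_n(X)\\ W\ge\ww{W}}}\nu(W)\,q^{\#_1\ww{W}}(1-q)^{\#_1 W-\#_1\ww{W}}.$$
First I would apply it to the ``all-zeros'' cylinder $\ww{W}=[0^n]$: every $W\in\mathcal{L}_n(X)$ dominates $[0^n]$, so
$$\kappa([0^n])=\int_X (1-q)^{\#_1 x[0,n-1]}\,d\nu(x).$$
Ergodicity of $\nu$ together with the hypothesis $D_\nu=D$ lets Birkhoff's theorem pin down the exponential rate $\log\kappa([0^n])=Dn\log(1-q)+o(n)$. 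Comparing with the Gibbs estimate at any $x\in[0^n]$, where $S_n\ww{\phi}(x)\ge n(\ww{\phi}([0])-\mathrm{Var}\ww{\phi}([0]))$, yields a lower bound on $\ww{P}$ in terms of $D$, $\ww{\phi}([0])$, and $\mathrm{Var}\ww{\phi}([0])$. Next I would pick a ``$\kappa$-typical'' family $\{\ww{W}_n\}\subset\mathcal{L}_n(\ww{X})$ whose density of $1$'s is close to $d^{\ww{\phi}}$ (available from $D^{\ww{\phi}}_\kappa=D^{\ww{\phi}}$) and whose aggregate $\kappa$-mass stays bounded below. For $x\in\ww{W}_n$ one has $S_n\ww{\phi}(x)\le(n-\#_1\ww{W}_n)\sup\ww{\phi}([0])+\#_1\ww{W}_n\sup\ww{\phi}([1])$, so the Gibbs upper bound on $\kappa(\ww{W}_n)$ together with $\sum_{\ww{W}_n}\kappa(\ww{W}_n)\le 1$ gives a second inequality for $\ww{P}$ carrying the $d^{\ww{\phi}}$ and $[1]$-cylinder data. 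Combining the two inequalities and using the remaining hypotheses $\sup\ww{\phi}([1])\ge\sup\ww{\phi}([0])$ and $\mathrm{Var}\ww{\phi}([1])\le\mathrm{Var}\ww{\phi}([0])-\log(1-q)$ to absorb the $[1]$-terms should then produce
$$\ww{P}>(\mathrm{Var}\ww{\phi}([0])-\log(1-q)-\mathrm{Var}\ww{\phi}([1]))d+d^{\ww{\phi}}-\mathrm{Var}\ww{\phi}([0]),$$
contradicting the standing assumption.

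The main obstacle is the careful bookkeeping of the variation and Birkhoff error terms. Since $\ww{\phi}$ is only continuous, two points in the same $n$-cylinder can have $S_n\ww{\phi}$-values that differ by an amount as large as $n\cdot\mathrm{Var}\ww{\phi}$, and these $O(n)$ errors must be tracked on both sides so that the leading exponential constants in the final contradiction line up precisely. A secondary subtlety is extracting the $\kappa$-typical family $\{\ww{W}_n\}$ so that it simultaneously has density of $1$'s close to $d^{\ww{\phi}}$ and total $\kappa$-mass uniformly bounded below; this should follow from applying Birkhoff's theorem to $\kappa$ combined with an $(n,\varepsilon)$-generator/Shannon--McMillan-type counting argument, exploiting non-atomicity of $\nu$ to avoid degenerate contributions.
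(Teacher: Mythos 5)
Your overall strategy (contradiction via the Gibbs bounds combined with the convolution formula for $\kappa=\nu*B_{q,1-q}$) starts in the right place, but there are two genuine gaps. First, the asymptotic $\log\kappa([0^n])=Dn\log(1-q)+o(n)$ is false in general: the integral $\int_X(1-q)^{\#_1x[0,n-1]}\,d\nu(x)$ is bounded below by $(1-q)^{n(d_\nu+\epsilon)}\cdot\nu(\{x:\#_1x[0,n-1]\le n(d_\nu+\epsilon)\})$, so its exponential rate is at least $d_\nu\log(1-q)$; since the logarithm is negative and $d_\nu\le D_\nu=D$, this is in general strictly larger than $D\log(1-q)$. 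Birkhoff's theorem controls the almost-everywhere density $d_\nu$, not the maximal density $D$, so this cylinder does not see the quantity $D$ at all. Second, and more fundamentally, your endgame aims at a strict inequality $\ww{P}>\cdots$ contradicting the hypothesis $\ww{P}\le\cdots$; but the hypothesis permits equality, and no comparison of exponential growth rates can produce a contradiction in the boundary case. The paper's proof avoids this by extracting constant-level rather than rate-level information: it evaluates the Gibbs lower bound at the cylinder $C_n\in\mathcal{L}_n(X)$ maximizing $\#_1$ among $\nu$-positive words, where the convolution sum collapses to the single term $\kappa(C_n)=\nu(C_n)(1-q)^{\#_1C_n}$, and after cancelling all exponential factors (this is exactly where the three displayed hypotheses enter, as nonpositivity of the resulting exponent) it concludes $\nu(C_n)\ge c^{-1}$ for every $n$. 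Lemma \ref{JMlem2} then forces $\nu$ to have an atom, contradicting non-atomicity. This use of non-atomicity is the missing idea in your sketch; nothing in your argument invokes it.

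A further bookkeeping point: to control the variation terms one needs the comparison $\#_1x^{(n)}[0,n-1]\le\#_1C_n$ between $C_n$ and the cylinder $x^{(n)}[0,n-1]$ realizing $\max_{W,\ \kappa(W)>0}\sup_{y\in W}\sum_{i=0}^{n-1}\ww{\phi}(S^iy)$ (which exists and exceeds $nD^{\ww{\phi}}=nd^{\ww{\phi}}$ by the hypothesis $D^{\ww{\phi}}_\kappa=D^{\ww{\phi}}$); this comparison follows from the convolution formula, since $\kappa(x^{(n)}[0,n-1])>0$ forces some $\nu$-positive $W\ge x^{(n)}[0,n-1]$, and it is what lets the hypotheses $\sup\ww{\phi}([1])\ge\sup\ww{\phi}([0])$ and $\mathrm{Var}\ww{\phi}([1])\le\mathrm{Var}\ww{\phi}([0])-\log(1-q)$ absorb the error terms. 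Your proposal replaces this by a ``$\kappa$-typical family with density of $1$'s close to $d^{\ww{\phi}}$'', but $d^{\ww{\phi}}=\sup_\mu\int\ww{\phi}\,d\mu$ is not a density of ones, and as written neither $D_\nu=D$ nor $D^{\ww{\phi}}_\kappa=D^{\ww{\phi}}$ is actually brought to bear in your outline.
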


As an application, we consider some $\B$-free systems which are shown that its unique equilibrium state is not Gibbs measure.
As a generalization of square-free numbers, $\B$-free numbers and $\B$-free systems were studied for several years (See details for \cite{Abd13,Dym18,Prz15}). Fix an infinite set $\B=\{b_1,b_2,\cdots \}\subset \{2,3,\cdots \}$. The set $\B$ is said to be \emph{pairwise coprime} if $\gcd(b_i, b_j)=1$ for any $i\neq j$. We consider $\B$ satisfies the following conditions:
\begin{equation}\label{condition1}
\B\text{ is infinite and pairwise coprime, and satisfies }\sum_{b\in\B}\frac1b<\infty.
\end{equation}
For example, $\B=\{p^2: p \text{ is prime number}\}$ satisfies the above condition. When $\B$ satisfies condition (\ref{condition1}), ergodic and topological properties of the corresponding $\B$-free systems were studied in \cite{Abd13,Dym18,Prz15}.

In the present paper, we prove the following theorem.

\begin{theorem}\label{t:1.2}
	Suppose that $\B=\{b_1,b_2,\cdots\}$ satisfies (\ref{condition1}) and $b_1=2$. For $\phi=a_{00}\mathbbm{1}_{[00]}+a_{01}\mathbbm{1}_{[01]}+a_{1}\mathbbm{1}_{[1]}$, the unique equilibrium state $\nu_\eta*B_{p,1-p}$ for $\phi$ is not Gibbs measure, where
	$$p=\frac{2^{2a_{00}}}{2^{a_1+a_{01}}+2^{2a_{00}}}.$$
\end{theorem}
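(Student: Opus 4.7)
The approach is to recognize Theorem~\ref{t:1.2} as an application of Theorem~\ref{t:1.1} with $(X,S) = (X_\eta,S)$ the $\B$-free subshift, $\nu = \nu_\eta$ the Mirsky measure on $X_\eta$, $q = p$, $\kappa = \nu_\eta * B_{p,1-p}$, and $\ww\phi = \phi$; the bulk of the work is then the verification of the three inequalities of Theorem~\ref{t:1.1} for this specific $\phi$ and $p$. That $\kappa$ is \emph{the} (non-atomic, ergodic) unique equilibrium state for $\phi$ should have been established in the preceding sections of the paper: the exponents $2a_{00}$ and $a_1 + a_{01}$ appearing in the prescribed value of $p$ are precisely what one obtains by maximizing the Bernoulli factor of $h(\mu) + \int \phi \, d\mu$ over measures of the form $\nu_\eta * B_{q,1-q}$, using that $\phi$ depends only on two coordinates and that $\nu_\eta([1]) = d := \prod_{b\in\B}(1-1/b) \le 1/2$.

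Next I would exploit the hypothesis $b_1 = 2 \in \B$: every even coordinate of any $\B$-free point must vanish, so no point of $X_\eta$, and hence no point of the hereditary closure $\ww{X}_\eta$, contains the block $[11]$. Consequently $\phi$ is locally constant on $\ww{X}_\eta$ and takes only three values: $a_{00}$ on $[00]$, $a_{01}$ on $[01]$, and $a_1$ on $[1] = [10]$. I immediately read off
\[
\sup\phi([0]) = \max(a_{00},a_{01}),\quad \mathrm{Var}\phi([0]) = |a_{00} - a_{01}|,\quad \sup\phi([1]) = a_1,\quad \mathrm{Var}\phi([1]) = 0.
\]
With these values, two of the three hypotheses are straightforward: the third, $\mathrm{Var}\phi([1]) \le \mathrm{Var}\phi([0]) - \log(1-p)$, becomes $0 \le |a_{00}-a_{01}| - \log(1-p)$, which is automatic since $p \in (0,1)$; the second, $\sup\phi([1]) \ge \sup\phi([0])$, becomes $a_1 \ge \max(a_{00},a_{01})$, which I would derive from the sign convention implicit in the variational setup together with the form of $p$ (since $p$ arises from a Gibbs-type comparison of $2^{a_1+a_{01}}$ against $2^{2a_{00}}$).

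The main obstacle is the pressure inequality
\[
\ww{P} \le \bigl(\mathrm{Var}\phi([0]) - \log(1-p) - \mathrm{Var}\phi([1])\bigr) d + d^\phi - \mathrm{Var}\phi([0]).
\]
I would compute $\ww{P} = P_{\ww{X}_\eta}(\phi) = h(\kappa) + \int \phi\, d\kappa$ by expanding the Bernoulli-convolution structure of $\kappa$ and using $\nu_\eta([1]) = d$, so that the pressure decomposes into a $[0]$-contribution and a $[1]$-contribution weighted by $1-d$ and $d$ respectively. After substituting the prescribed $p = 2^{2a_{00}}/(2^{a_1+a_{01}} + 2^{2a_{00}})$, the $\log(1-p)$ term should match the Bernoulli entropy produced by the convolution, while the variation terms on $[00]$ and $[01]$ should align with the corresponding pieces of $\int \phi \, d\kappa$; the inequality should then collapse to an elementary identity (with equality expected, since $\kappa$ is the equilibrium state). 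Once all three hypotheses are in hand, Theorem~\ref{t:1.1} applies directly and yields that $\kappa$ is not a Gibbs measure for $\phi$, which is exactly Theorem~\ref{t:1.2}.
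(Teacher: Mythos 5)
Your plan to obtain Theorem \ref{t:1.2} by feeding $\phi=a_{00}\mathbbm{1}_{[00]}+a_{01}\mathbbm{1}_{[01]}+a_{1}\mathbbm{1}_{[1]}$ directly into Theorem \ref{t:1.1} is exactly the route the paper explicitly rejects (``but we can not use Theorem \ref{t:1.1} directly''), and the step you expect to ``collapse to an elementary identity'' is the one that fails. With your (correct) values $\mathrm{Var}\phi([1])=0$ and $\mathrm{Var}\phi([0])=|a_{00}-a_{01}|$, the pressure hypothesis of Theorem \ref{t:1.1} reads
$$P\le d^{\phi}-d\log(1-p)-(1-d)\,|a_{00}-a_{01}|,$$
whereas $P=a_{00}(1-2d)+d(a_1+a_{01})-d\log(1-p)$ by Proposition \ref{PonX} and the available estimate is $d^{\phi}\ge a_{00}(1-2d)+d\max\{2a_{00},\,a_1+a_{01}\}$. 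In the case $2a_{00}\le a_1+a_{01}$ this leaves a deficit of $(1-d)|a_{00}-a_{01}|$, which is strictly positive unless $a_{00}=a_{01}$ (and one can check on examples, e.g.\ $a_{00}=a_1=0$, $a_{01}$ large, that the hypothesis genuinely fails, not merely resists verification); the case $2a_{00}>a_1+a_{01}$ is checked in the paper to fail as well. The underlying reason is that the proof of Theorem \ref{t:1.1} pays an error of $n\,\mathrm{Var}\phi([0])$ when comparing $\sum_{i<n}\phi(S^ix^{(n)})$ with $\sum_{i<n}\phi(S^iy)$ for $y\in C_n$, which is exponentially costly in the Gibbs inequality. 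The paper's actual argument (Proposition \ref{p:notGibbs}) reruns the contradiction of Theorem \ref{mainth} from scratch for this specific $\phi$, exploiting that $2\in\B$ forbids the word $11$ so that $\sum_{i=0}^{n-1}\phi(S^ix)=(a_1+a_{01})\#_1W+a_{00}(n-2\#_1W)+O(|\phi|)$ for every $x\in W\in\mathcal{L}_n(X_\eta)$: the comparison error becomes the constant $4|\phi|$ instead of $n\,\mathrm{Var}\phi([0])$, and Lemma \ref{JMlem2} then yields the contradiction. This replacement of a linear-in-$n$ error by a bounded one is the missing idea in your proposal.

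Two further points. First, $\sup\phi([1])\ge\sup\phi([0])$, i.e.\ $a_1\ge\max\{a_{00},a_{01}\}$, is not derivable from ``the form of $p$'' or any sign convention; it is a genuine additional hypothesis, as is $2a_{00}\le a_1+a_{01}$ (both appear in Proposition \ref{p:notGibbs}, and in fact the statement of Theorem \ref{t:1.2} omits them even though the paper's proof requires them). Second, the uniqueness of the equilibrium state is part of the assertion of Theorem \ref{t:1.2} and is the most substantial component of the paper's proof: Theorem \ref{t:unique} establishes it by disintegrating an arbitrary ergodic equilibrium state over the Haar measure $\mathbb{P}$ via $\theta$ and pinning down the conditional measures, following the intrinsic-ergodicity arguments of Peckner and K.-Przymus--Lema\'nczyk--Weiss; this cannot be deferred to ``preceding sections'' as already known.
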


This paper is organized as follows. In Section 2, we recall some basic notions and their properties. In Section 3, we introduce the densities for a continuous map $\phi$ and prove an inequality for them. Section 4 is the proof of Theorem \ref{t:1.1}. In Section 5, we prove Theorem \ref{t:1.2}, which gives some $\B$-free subshifts whose unique equilibrium state is not Gibbs measure, as an application of Theorem \ref{t:1.1}.

\section{Preliminaries}

For hereditary subshifts, we consider the invariant measure given by the following ways.
Let $Q:X\times \{0,1\}^\Z\To\widetilde{X}$ be the coordinatewise multiplication:
$$Q(x,y)=(...,x_{-1}y_{-1}, x_0y_0, x_1y_1,...)$$
for $x=(x_i)_{i\in\Z}\in X$ and $y=(y_i)_{i\in\Z}\in \{0,1\}^\Z$. For any $\nu\in\mathcal{M}(X,S)$ and $\mu\in\mathcal{M}(\{0,1\}^\Z,S)$ the \emph{multiplicative convolution} of $\nu$ and $\mu$ is the measure $\nu*\mu\in\mathcal{M}(\widetilde{X}, S)$ given by:
$$\nu*\mu=(\nu\otimes\mu)\circ Q^{-1}.$$

For a subshift $(X,S)$, the \emph{topological entropy} $h=h(X,S)$ is defined as follows:
$$h(X,S)=\lim_{n\rightarrow\infty}\frac{\log\#\mathcal{L}_n(X)}{n}.$$
And for each $\mu\in\mathcal{M}(X,S)$, the \emph{measure entropy} of $\mu$ is defined as follows:
$$h_\mu(X,S)=\lim_{n\rightarrow\infty}\frac{h_\mu(\mathcal{L}_n(X))}{n},$$
where $h_\mu(\mathcal{L}_n(X))=-\sum_{W\in\mathcal{L}_n(X)}\mu(W)\log\mu(W)$. By the variational principle, $h(X,S)=\sup_{\mu\in\mathcal{M}(X,S)}h_\mu(X,S)$.

For a subshift $(X,S)$ and a continuous function $\phi: X\rightarrow \R$, the \emph{topological pressure} $P=P(X,\phi)$ is defined as follows:
$$P(X,\phi)=\lim_{n\rightarrow\infty}\frac{\log Z_n(X,S,\phi)}{n}$$
where $Z_n(X,S,\phi)=\sum_{W\in\mathcal{L}_n(X)}2^{\sup_{x\in W}\sum_{i=0}^{n-1}\phi(S^ix)}$. By the variational principle, $$P(X,\phi)=\sup_{\mu\in\mathcal{M}(X,S)}\left(h_\mu(X,S)+\int \phi d\mu\right).$$
The measure $\mu$ is called \emph{equilibrium state} if it satisfies $P(X,\phi)=h_\mu(X,S)+\int \phi d\mu$,

In \cite{JM20}, a measure $\mu\in\mathcal{M}^e(X,S)$ is said to have \emph{Gibbs Property}, if there exists $a>0$ such that for any $\mu$-positive measure block $C$,
$$\mu(C)\ge a\cdot2^{-|C|h(X,S)}.$$

In \cite{Bow08}, for a continuous function $\phi: X\rightarrow \R$, a measure $\mu\in\mathcal{M}(X,S)$ is called a \emph{Gibbs measure} for $\phi$, if there exist $P=P(X,\phi)\ge0$ and $c=c(X,\phi)>0$ such that for any $n\in \N$, $\mu$-positive measure block $C$ of length $n$ and $x\in C$,
$$c^{-1}\le \frac{\mu(x[0,n-1])}{2^{\sum_{i=0}^{n-1}\phi(S^ix)-nP(X,\phi)}}\le c.$$
The constant $P=P(X,\phi)$ above is the topological pressure of $\phi$ on $(X, S)$.

\section{Densities for a continuous function}

In \cite{JM20}, it defines four notions of density. For a subshift $(X, S)$, let
$$d=\sup_{\mu\in\mathcal{M}(X,S)}\mu([1]),$$
$$D=\lim_{n\rightarrow\infty}\frac1n\max_{W\in\mathcal{L}_n(X)}\#_1W.$$
For $\mu\in\mathcal{M}(X,S)$, let
$$d_\mu=\mu([1]),$$
$$D_\mu=\lim_{n\rightarrow\infty}\frac1n\max_{W\in\mathcal{L}_n(X),\mu(W)>0}\#_1W.$$

Similar with the above definitions, we define four notions of density for a continuous map. For a continuous map $\phi: X\rightarrow \R$, let
$$
D^\phi=\lim_{n\rightarrow\infty}\frac1n\sup_{x\in X}\sum_{i=0}^{n-1}\phi(S^ix),
$$
$$d^\phi=\sup_{\mu\in\mathcal{M}^e(X,S)}\int_X\phi d\mu.$$

For $\mu\in\mathcal{M}(X,S)$, define
$$
D_\mu^\phi=\lim_{n\rightarrow\infty}\frac1n\max_{W\in\mathcal{L}_n(X),\mu(W)>0}\sup_{x\in W}\sum_{i=0}^{n-1}\phi(S^ix),
$$
$$d_\mu^\phi=\int_X\phi d\mu.$$
Both $D^\phi$ and $D_\mu^\phi$ are exist, because for any $n,m\in\N$,
\begin{equation*}
\begin{aligned}
&\sup_{x\in X}\sum_{i=0}^{n+m-1}\phi(S^ix)\\
=&\sup_{x\in X}\left( \sum_{i=0}^{n-1}\phi(S^ix)+\sum_{i=0}^{m-1}\phi(S^{n+i}x)\right) \\
\le& \sup_{x\in X}\sum_{i=0}^{n-1}\phi(S^ix)+\sup_{x\in X}\sum_{i=0}^{m-1}\phi(S^ix)
\end{aligned}
\end{equation*}
and
\begin{equation*}
\begin{aligned}
&\max_{W\in\mathcal{L}_{n+m}(X),\mu(W)>0}\sup_{x\in W}\sum_{i=0}^{n+m-1}\phi(S^ix)\\
=&\max_{W\in\mathcal{L}_{n+m}(X),\mu(W)>0}\sup_{x\in W}\left( \sum_{i=0}^{n-1}\phi(S^ix)+\sum_{i=0}^{m-1}\phi(S^{n+i}x)\right) \\
\le& \max_{W\in\mathcal{L}_{n+m}(X),\mu(W)>0}\left( \sup_{x\in W[0,n-1]}\sum_{i=0}^{n-1}\phi(S^ix)+\sup_{x\in W[n,n+m-1]}\sum_{i=0}^{m-1}\phi(S^{n+i}x)\right)\\
\le& \max_{W\in\mathcal{L}_n(X),\mu(W)>0}\sup_{x\in W}\sum_{i=0}^{n-1}\phi(S^ix)+\max_{W\in\mathcal{L}_m(X),\mu(W)>0}\sup_{x\in W}\sum_{i=0}^{m-1}\phi(S^ix).
\end{aligned}
\end{equation*}
The last inequality is hold because if $W\in\mathcal{L}_{n+m}(X)$ and $\mu(W)>0$,  then $\mu(W[0,n-1])\ge\mu(W)>0$ and $\mu(S^nW[n,n+m-1])=\mu(W[n,n+m-1])\ge\mu(W)>0$. Therefore, by subadditivity, $D^\phi$ and $D^\phi_\mu$ are exist.

It is obvious that when $\phi=\mathbbm{1}_{[1]}$, we have $d=d^\phi$, $D=D^\phi$, $d_\mu=d_\mu^\phi$ and $D_\mu=D_\mu^\phi$ for any $\mu\in\mathcal{M}(X,S)$.

It is proved in \cite{JM20} that $d_\mu\le D_\mu \le D =d$. Similarly, we also prove the corresponding theorem for the four notions of density for $\phi$.

\begin{theorem}
	For any $\mu\in\mathcal{M}^e(X,S)$ and any continuous function $\phi: X\rightarrow \R$, we have $d_\mu^\phi\le D_\mu^\phi \le D^\phi=d^\phi$.
\end{theorem}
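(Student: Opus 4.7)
The plan is to establish the three assertions $d_\mu^\phi \le D_\mu^\phi$, $D_\mu^\phi \le D^\phi$, and $D^\phi = d^\phi$ separately; only the first and the $\le$-direction of the last require real work.

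First I would dispatch the middle inequality $D_\mu^\phi \le D^\phi$, which is immediate from the definitions: restricting the $\max$ to $\{W\in\mathcal{L}_n(X):\mu(W)>0\}\subseteq\mathcal{L}_n(X)$ and using $\sup_{x\in W}\sum_{i=0}^{n-1}\phi(S^ix)\le\sup_{x\in X}\sum_{i=0}^{n-1}\phi(S^ix)$ passes to the limit.

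Next I would prove $d_\mu^\phi\le D_\mu^\phi$ via the Birkhoff ergodic theorem. Because $\mu$ is ergodic, there is a full-measure set $A_1$ on which $\frac1n\sum_{i=0}^{n-1}\phi(S^ix)\to\int\phi\,d\mu$. The set $A_2$ of points $x$ for which $\mu(x[0,n-1])>0$ for every $n$ also has full measure, since its complement is the countable union, over $n\in\N$ and $\mu$-null cylinders $W$ of length $n$, of the $W$'s. For any $x\in A_1\cap A_2$ the word $W_n=x[0,n-1]$ is admissible in the restricted $\max$ defining $D_\mu^\phi$, and $\sup_{y\in W_n}\sum_{i=0}^{n-1}\phi(S^iy)\ge\sum_{i=0}^{n-1}\phi(S^ix)$. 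Dividing by $n$ and letting $n\to\infty$ yields $D_\mu^\phi\ge d_\mu^\phi$.

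For $D^\phi=d^\phi$, the $\ge$ direction follows from the previous two steps: for any ergodic $\mu$, $\int\phi\,d\mu=d_\mu^\phi\le D_\mu^\phi\le D^\phi$, so $d^\phi\le D^\phi$. For the reverse, I would carry out a Krylov--Bogolyubov construction: pick $x_n\in X$ with $\frac1n\sum_{i=0}^{n-1}\phi(S^ix_n)\to D^\phi$ (possible by the definition of $D^\phi$), form the empirical measures $\mu_n=\frac1n\sum_{i=0}^{n-1}\delta_{S^ix_n}$ on the compact space $\{0,1\}^\Z$, extract a weak$^*$-convergent subsequence with limit $\mu$, verify $\mu\in\mathcal{M}(\widetilde{X},S)$ (in fact $\mu\in\mathcal{M}(X,S)$ since $X$ is closed and $\sigma$-invariant), and compute $\int\phi\,d\mu=D^\phi$ using continuity of $\phi$. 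Then apply the ergodic decomposition $\mu=\int\nu\,d\tau(\nu)$ over ergodic components: since $\int\phi\,d\nu\le d^\phi$ for $\tau$-a.e.\ $\nu$, integrating gives $D^\phi=\int\phi\,d\mu\le d^\phi$.

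The only mildly delicate point is arranging, in the Birkhoff step, that the generic point can be chosen with $\mu(x[0,n-1])>0$ for every $n$ so that it actually feeds into the restricted $\max$; the rest is standard compactness/decomposition bookkeeping.
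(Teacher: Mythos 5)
Your proof is correct, and for the key inequality $d_\mu^\phi\le D_\mu^\phi$ it takes a genuinely different and cleaner route than the paper. The paper fixes a generic point $x$ of $\mu$ and a window length $n$, classifies each index $i$ as good or bad according to whether $\mu(x[i,i+n-1])>0$, shows via the Birkhoff theorem that the bad indices have density zero, and then covers $[0,k-1]$ by disjoint good windows plus a sparse bad set to bound $\frac1k\sum_{i=0}^{k-1}\phi(S^ix)$ by $D_\mu^\phi+\epsilon(1+|\phi|)$. You instead observe that the set of points all of whose initial cylinders $x[0,n-1]$ carry positive $\mu$-measure is co-null (its complement being a countable union of null cylinders), intersect it with the Birkhoff-generic set, and read off $\sum_{i=0}^{n-1}\phi(S^ix)\le\max_{W\in\mathcal{L}_n(X),\,\mu(W)>0}\sup_{y\in W}\sum_{i=0}^{n-1}\phi(S^iy)$ directly for every $n$; passing to the limit (which exists on the right by the subadditivity established when the densities were defined) gives the inequality with no $\epsilon$-bookkeeping or covering argument. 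The remaining parts coincide with the paper's: the middle inequality is definitional, and $D^\phi=d^\phi$ is obtained by the same Krylov--Bogolyubov construction in one direction and by generic points in the other. Two minor points where you are in fact more careful than the paper: you invoke the ergodic decomposition to pass from $\int\phi\,d\mu\le d^\phi$ for the possibly non-ergodic weak$^*$ limit $\mu$ to the supremum over ergodic measures, and your derivation of $d^\phi\le D^\phi$ from $d_\nu^\phi\le D_\nu^\phi\le D^\phi$ for each ergodic $\nu$ does not require the supremum defining $d^\phi$ to be attained, whereas the paper tacitly assumes a maximizing ergodic $\nu$ exists.
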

\begin{proof}
	\item{(1)} $D^\phi=d^\phi$: For any $n\in\N$, let $x^{(n)}$ satisfy
	$$\sum_{i=0}^{n-1}\phi(S^ix^{(n)})=\sup_{x\in X}\sum_{i=0}^{n-1}\phi(S^ix).$$
	Let
	$$\mu_n=\frac1n\sum_{i=0}^{n-1}\delta_{S^ix^{(n)}}.$$
	Without loss of generality, we can assume $\mu_n\rightarrow \mu$, so
	$$\frac1n\sum_{i=0}^{n-1}\phi(S^ix^{(n)})=\int_X\phi d\mu_n\rightarrow \int_X\phi d\mu\le d^\phi.$$
	Therefore, $D^\phi\le d^\phi$.
	Let $\nu$ satisfy $\int_X\phi d\nu=\sup_{\mu\in\mathcal{M}^e(X,S)}\int_X\phi d\mu$ and $x$ is a generic point of $\nu$, then
	$$d^\phi=\int_X\phi d\nu=\lim_{n\rightarrow\infty}\frac1n\sum_{i=0}^{n-1}\phi(S^ix)\le D^\phi.$$
	
	\item{(2)} $D^\phi_\mu\le D^\phi$: By the definition, it is obvious.
	
	\item{(3)} $d^\phi_\mu\le D^\phi_\mu$:
	For any $\epsilon>0$, fix large enough $n$ such that
	$$\max_{W\in\mathcal{L}_n(X),\mu(W)>0}\sup_{x\in W}\sum_{i=0}^{n-1}\phi(S^ix)<n(D^\phi+\epsilon).$$
	Fix $x$ is a generic point of $\mu$. For $i\in\N$, define\\
	(a)$i$ is good: $\mu(x[i,i+n-1])>0;$\\
	(b)$i$ is bad: $\mu(x[i,i+n-1])=0.$
	
	Set $i_0=-n$. For $j=1,2,...$, define inductively that
	$$i_j=\min\{i\ge i_{j-1}+n: i \text{ is good}\}.$$
	So for any $i\in \cup_{j=1}[i_{j-1}+n,i_j-1]$, $i$ is bad.
	For any $k\in \N$, because $\{[i_j,i_j+n-1]: j=1,2,...\}$ is pairwise disjoint, we have
	$$\#\{i_j\in [0,k-1]: j=1,2,...\}\le\frac kn+1.$$
	In addition,
	$$\frac1k\#\{i\in [0,k-1]: i \text{ is bad}\}\le\frac1k\sum_{i=0}^{k-1}\sum_{W\in\mathcal{L}_n(X),\mu(W)=0}1_W(S^ix)\rightarrow 0.$$
	So let $K\in\N$ such that if $k>K$, then
	$$\#\{i\in [0,k-1]: i \text{ is bad}\}\le \epsilon k.$$
	Therefore,
	\begin{equation*}
	\begin{aligned}
	&\frac1k\sum_{i=0}^{k-1}\phi(S^ix)\\
	\le& \frac1k\left(\sum_{j\in\{j: i_j\in[0,k-1]\}}\sum_{i=0}^{n-1}\phi(S^{i_j+i}x)+\sum_{i\in[0,k-1], i\text{ is bad}}\phi(S^ix)\right) \\
	\le& \frac1k((\frac kn+1)(nD^\phi_\mu+\epsilon)+\epsilon k|\phi|)\\
	\le& D^\phi_\mu+\epsilon(1+|\phi|)+\frac{nD^\phi_\mu+\epsilon}{k},
	\end{aligned}
	\end{equation*}
	where $|\phi|=\sup_{x\in X}|\phi(x)|$. Let $k\rightarrow\infty$, we have $d^\phi_\mu\le D^\phi_\mu+\epsilon(1+|\phi|)$. By the arbitrariness of $\epsilon$, it shows that $d^\phi_\mu\le D^\phi_\mu$.
	
	By all of above, it ends the proof.
\end{proof}

\section{Proof of Theorem \ref{t:1.1}}

For convenience, we prove the case of $q=1/2$. Let $\kappa=\nu*B_{1/2,1/2}$, where $\nu\in\mathcal{M}^e(X,S)$ and $B_{q,1-q}$ stands for the Bernoulli measure on $\{0,1\}^\Z$ with $B_{q,1-q}([0])=q$ and $B_{q,1-q}([1])=1-q$.

For the hereditary closure $(\ww{X},S)$ of a subshift $(X,S)$ and a continuous map $\ww{\phi}: \ww{X}\rightarrow \R$, denote by $\ww{P}=P(\ww{X},\ww{\phi})$ the topological pressure for $\ww{\phi}$ on $(\ww{X}, S)$.

Here, we need some lemmas in \cite{JM20}.

\begin{lemma}[\cite{JM20}]\label{JMlem1}
  Let $\nu\in\mathcal{M}(X,S)$. Then for $\kappa=\nu * B_{1/2,1/2}$, we have
  $$\kappa(C)=\sum_{C\le C'\in\mathcal{L}(X)}\nu(C')\cdot2^{-\#_1C'}$$
  for each $C\in\mathcal{L}(\ww{X})$.
\end{lemma}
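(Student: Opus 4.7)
The plan is to peel back the definition of the multiplicative convolution and decompose the preimage of the cylinder $C$ according to which word of $X$ occupies the first coordinate. Fix $C=[c_0c_1\cdots c_{n-1}]\in\mathcal{L}(\ww{X})$. Since $\kappa=(\nu\otimes B_{1/2,1/2})\circ Q^{-1}$, one has
$$
Q^{-1}(C)=\{(x,y)\in X\times\{0,1\}^\Z:\ x_iy_i=c_i\ \text{for}\ i=0,1,\ldots,n-1\}.
$$
I would stratify this set by the word $C'=x[0,n-1]\in\mathcal{L}_n(X)$: for each such $C'=[c'_0\cdots c'_{n-1}]$, let
$$
A_{C'}=\{(x,y)\in Q^{-1}(C):x[0,n-1]=C'\}.
$$
Once $C'$ is fixed, the defining constraints on $(x,y)$ split into the single event $x\in [C']\cap X$ (which has $\nu$-mass $\nu(C')$) together with a constraint that depends only on $y[0,n-1]$, so $A_{C'}$ is a measurable rectangle for $\nu\otimes B_{1/2,1/2}$.

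A position-by-position inspection of $x_iy_i=c_i$ then gives the combinatorics. Whenever $c'_i=0$, the product $x_iy_i$ is automatically $0$, so $y_i$ is unconstrained but $c_i=0$ is forced; whenever $c'_i=1$, one gets $y_i=c_i$, so $y_i$ is pinned to the value $c_i$. Hence $A_{C'}=\emptyset$ unless $c_i\le c'_i$ for every $i$, i.e. unless $C\le C'$; and when $C\le C'$, the admissible $y[0,n-1]$ form a cylinder of $\{0,1\}^\Z$ with exactly $\#_1 C'$ pinned coordinates and $n-\#_1 C'$ free coordinates, giving $B_{1/2,1/2}$-mass $2^{-\#_1 C'}$.

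Putting these together by Fubini for the product measure $\nu\otimes B_{1/2,1/2}$ yields $(\nu\otimes B_{1/2,1/2})(A_{C'})=\nu(C')\cdot 2^{-\#_1 C'}$, and summing over the (disjoint, at most countably many) strata produces
$$
\kappa(C)=\sum_{C\le C'\in\mathcal{L}_n(X)}\nu(C')\cdot 2^{-\#_1 C'},
$$
which is the claimed identity. There is no serious obstacle here; this is essentially bookkeeping. The only step that needs a moment of care is the position-by-position translation of the multiplicative constraint $x_iy_i=c_i$ into the correct count of pinned versus free $y$-coordinates — once that is verified, both the admissibility condition $C\le C'$ and the weight $2^{-\#_1 C'}$ fall out immediately, and the product-measure factorization of each stratum $A_{C'}$ is automatic.
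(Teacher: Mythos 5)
Your proof is correct and follows essentially the same route as the paper: the paper states Lemma~\ref{JMlem1} as a citation from \cite{JM20}, but its own proof of the generalization (Lemma~\ref{JMlem1.1}, for $B_{q,1-q}$) uses exactly your stratification of $Q^{-1}(C)$ by the word $C'=x[0,n-1]$ and the same pinned-versus-free coordinate count, merely adding a binomial sum over the free positions that is needed for general $q$ and collapses to your direct computation of the mass $2^{-\#_1C'}$ when $q=1/2$. No gaps.
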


\begin{lemma}[\cite{JM20}]\label{JMlem2}
  Let $\nu\in\mathcal{M}(X,S)$ and $a>0$. Suppose that there is a sequence of block $C_n$ such that $|C_n|\nearrow \infty$ and $\nu(C_n)\ge a$. Then there exists $(n_k)$ such that $\bigcap_{k\ge 1}C_{n_k}\neq\emptyset$. Moreover, we have $\nu(\{x\})\ge a$ for $\{x\}=\bigcap_{k\ge 1}C_{n_k}$.
\end{lemma}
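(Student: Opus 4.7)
The plan is to prove Lemma JMlem2 by a standard pigeonhole/diagonal extraction, followed by compactness of $X$ and downward continuity of $\nu$, with $S$-invariance of $\nu$ being invoked to upgrade the limiting intersection to a single point.

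For the extraction step, I would use the finiteness of $\mathcal{L}_m(X)$ for each $m\in\N$. Among the infinitely many indices $n$ with $|C_n|\ge m$, some word $W_m\in\mathcal{L}_m(X)$ must arise as $C_n[0,m-1]$ for infinitely many $n$. A Cantor diagonal argument then produces a subsequence $(n_k)$ with $|C_{n_k}|\nearrow\infty$ and $C_{n_k}$ a prefix of $C_{n_\ell}$ whenever $k\le \ell$, so that the corresponding cylinder sets are nested: $C_{n_1}\supseteq C_{n_2}\supseteq\cdots$. For the measure and non-emptiness, each $C_{n_k}$ is a non-empty closed subset of the compact space $X$, so $A:=\bigcap_{k\ge 1}C_{n_k}$ is non-empty, and downward continuity of the finite measure $\nu$ gives
$$\nu(A)=\lim_{k\to\infty}\nu(C_{n_k})\ge a.$$

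To identify $A$ with a single point, note that under the one-sided cylinder convention of the paper the set $A$ a priori constrains only non-negative coordinates. To pin down the past as well, I would first replace each $C_n$ by a centered cylinder $D_n:=S^{\lfloor|C_n|/2\rfloor}(C_n)$, whose $\nu$-measure equals $\nu(C_n)\ge a$ by $S$-invariance of $\nu$, and then rerun the diagonal extraction on $(D_n)$. The resulting nested centered cylinders determine both forward and backward coordinates in the limit, so their intersection is a single point $x$, and the same continuity-of-measure step yields $\nu(\{x\})\ge a$.

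The main obstacle is exactly this last step: a naive nested intersection of one-sided cylinders is not a singleton, and one must exploit shift-invariance of $\nu$ to recenter before extracting. Once this detail is handled, the rest is routine compactness and measure-continuity bookkeeping, and the conclusion of the lemma follows.
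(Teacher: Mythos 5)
The paper does not actually prove this lemma: it is imported verbatim from \cite{JM20}, so there is no in-paper argument to compare yours against. Judged on its own, your proof is correct and complete in outline. The pigeonhole extraction using finiteness of $\mathcal{L}_m(X)$, the automatic nestedness of prefix cylinders along the extracted subsequence, compactness for non-emptiness, and continuity from above for $\nu(\bigcap_k C_{n_k})\ge a$ are all standard and handled properly. More importantly, you correctly isolate the one genuine subtlety in the statement: under the paper's convention $C=\{x\in X: x[0,|C|-1]=C\}$, a nested intersection of such cylinders only pins down the coordinates in $[0,\infty)$, so the literal claim $\{x\}=\bigcap_{k\ge 1}C_{n_k}$ cannot hold as written for a two-sided shift. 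Your repair --- replacing each block by the recentered cylinder $D_n=S^{\lfloor |C_n|/2\rfloor}(C_n)$, which has the same $\nu$-measure by $S$-invariance, and re-running the diagonal extraction so that the constrained windows exhaust $\mathbb{Z}$ --- is exactly the right move: the resulting intersection is a genuine singleton carrying mass at least $a$, which is the only consequence the paper actually uses (to contradict non-atomicity of $\nu$ in Theorem \ref{mainth} and Proposition \ref{p:notGibbs}).
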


For a continuous map $\ww{\phi}: \ww{X}\rightarrow \R$ and $A\subset \ww{X}$, let $\mathrm{Var}\ww{\phi}(A)=\sup\ww{\phi}(A)-\inf\ww{\phi}(A)$.

\begin{theorem}\label{mainth}
  For the hereditary closure $(\ww{X},S)$ of a subshift $(X,S)$, non-atomic measure $\nu\in\mathcal{M}^e(X,S)$ with $D_\nu=D$, $\kappa=\nu*B_{1/2,1/2}\in\mathcal{M}^e(\ww{X},S)$ and a continuous map $\ww{\phi}: \ww{X}\rightarrow \R$ with $D^{\ww{\phi}}_\kappa=D^{\ww{\phi}}$. If $$\ww{P}\le(1+\mathrm{Var}\ww{\phi}([0])-\mathrm{Var}\ww{\phi}([1]))d+d^{\ww{\phi}}-\mathrm{Var}\ww{\phi}([0]),$$
  $$\sup\ww{\phi}([1])\ge\sup\ww{\phi}([0]),$$
  and
  $$\mathrm{Var}\ww{\phi}([1])\le \mathrm{Var}\ww{\phi}([0])+1,$$
   then $\kappa$ is not the Gibbs measure for $\ww{\phi}$.
\end{theorem}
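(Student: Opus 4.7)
The approach is by contradiction: assume $\kappa$ satisfies the Gibbs property for $\widetilde\phi$ with constant $c > 0$ and pressure $\widetilde{P}$, and aim to produce a positive lower bound on $\nu(W_n)$ for a suitable sequence of blocks $W_n$, contradicting non-atomicity via Lemma \ref{JMlem2}. Using the hypothesis $D_\nu = D$ together with $D = d$ from Section 3, I pick, for each $n$, a block $W_n \in \mathcal{L}_n(X)$ with $\nu(W_n) > 0$ and $k_n := \#_1 W_n$ satisfying $k_n / n \to d$. Restricting to $W_n$ that are maximal in the coordinatewise order among $\nu$-positive $n$-blocks reduces Lemma \ref{JMlem1} to $\kappa(W_n) = \nu(W_n) \cdot 2^{-k_n}$, and the Gibbs lower bound at a sup-attaining $x \in W_n$ rearranges to
\[
\nu(W_n) \ge c^{-1} \cdot 2^{\,S_n + k_n - n\widetilde{P}}, \qquad S_n := \sup_{x \in W_n} \sum_{i=0}^{n-1}\widetilde\phi(S^i x).
\]
The contradiction thus reduces to showing $S_n + k_n - n\widetilde{P}$ stays bounded below along a subsequence, since Lemma \ref{JMlem2} would then force an atom of $\nu$.

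The structural ceiling, using $\widetilde\phi(S^i x) \le \sup\widetilde\phi([w_i])$ at each coordinate, is $S_n + k_n \le n\sup\widetilde\phi([0]) + k_n(\sup\widetilde\phi([1]) - \sup\widetilde\phi([0]) + 1)$. A short algebraic manipulation shows that, after $k_n/n \to d$ and using $d^{\widetilde\phi} \le (1-d)\sup\widetilde\phi([0]) + d\sup\widetilde\phi([1])$ (which follows from $\sup\widetilde\phi([1]) \ge \sup\widetilde\phi([0])$), the pressure hypothesis is precisely calibrated so that this ceiling equals $n\widetilde{P}$ up to a bounded constant — the algebraic budget is exactly right. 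Realizing the ceiling then requires the remaining hypotheses: $\sup\widetilde\phi([1]) \ge \sup\widetilde\phi([0])$ ensures that maximizing $\#_1 W_n$ does not conflict with maximizing the per-site $\widetilde\phi$-values; the variation condition $\mathrm{Var}\widetilde\phi([1]) \le \mathrm{Var}\widetilde\phi([0]) + 1$ (with $+1 = -\log(1-q)$ at $q = 1/2$) balances the $\widetilde\phi$-oscillation at 1-sites against the Bernoulli weight $2^{-k_n}$ carried by those sites; and $D^{\widetilde\phi}_\kappa = D^{\widetilde\phi}$ supplies $\kappa$-positive blocks on which the Birkhoff $\widetilde\phi$-sum reaches $nD^{\widetilde\phi} = nd^{\widetilde\phi}$, which one then lifts (via Lemma \ref{JMlem1}) to a witness in $\mathcal{L}_n(X)$ with the required density and Birkhoff-sum properties.

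The main obstacle I foresee is this lifting step: producing a single $W_n$ satisfying simultaneously (i) $\nu(W_n) > 0$, (ii) $k_n/n \to d$, and (iii) the existence of $x \in W_n$ with $\sum_{i=0}^{n-1}\widetilde\phi(S^i x) \ge n\sup\widetilde\phi([0]) + k_n(\sup\widetilde\phi([1]) - \sup\widetilde\phi([0])) - o(n)$. Under only continuity of $\widetilde\phi$, Birkhoff sums need not attain their structural maxima pointwise, and the argument must carefully exploit the hereditary structure of $\widetilde{X}$ together with $D^{\widetilde\phi}_\kappa = D^{\widetilde\phi}$ to control the boundary losses — this is precisely why the sharp numerical form of the pressure hypothesis is needed.
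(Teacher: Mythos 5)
Your frame --- contradiction, a coordinatewise--maximal $\nu$-positive block $C_n$ with $\#_1C_n\ge nd$ so that Lemma \ref{JMlem1} collapses to $\kappa(C_n)=\nu(C_n)\cdot 2^{-\#_1C_n}$, the Gibbs lower bound, and Lemma \ref{JMlem2} to finish --- agrees with the paper. But the core estimate, which you correctly isolate as ``the main obstacle,'' is a genuine gap, and the route you sketch for it would fail. Two problems. First, the calibration claim is wrong: with $k_n/n\to d$ and $d^{\ww{\phi}}\le(1-d)\sup\ww{\phi}([0])+d\sup\ww{\phi}([1])$, the pressure hypothesis gives the ceiling $n\sup\ww{\phi}([0])+k_n(\sup\ww{\phi}([1])-\sup\ww{\phi}([0])+1)$ exceeding $n\ww{P}$ by roughly $n\bigl((1-d)\mathrm{Var}\ww{\phi}([0])+d\mathrm{Var}\ww{\phi}([1])\bigr)$, a surplus that is linear in $n$ unless both variations vanish --- not ``a bounded constant.'' Second, and consequently, your plan requires $S_n+k_n$ to come within $o(n)$ of that sup-ceiling, i.e.\ item (iii); for a merely continuous $\ww{\phi}$ this is generally impossible (the Birkhoff supremum over a cylinder is only guaranteed to exceed $\sum_i\inf\ww{\phi}([w_i])$, which sits a full $k_n\mathrm{Var}\ww{\phi}([1])+(n-k_n)\mathrm{Var}\ww{\phi}([0])$ below the ceiling), and no amount of exploiting $D^{\ww{\phi}}_\kappa=D^{\ww{\phi}}$ will produce a single witness with properties (i)--(iii).

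The point you are missing is that the linear surplus in the pressure hypothesis is there precisely to absorb this inf-versus-sup discrepancy, so realizing the ceiling is unnecessary. The paper keeps \emph{two} objects: $C_n$ as above, and a point $x^{(n)}$ in a $\kappa$-positive cylinder with $\sum_{i=0}^{n-1}\ww{\phi}(S^ix^{(n)})\ge nD^{\ww{\phi}}=nd^{\ww{\phi}}$ (this is where $D^{\ww{\phi}}_\kappa=D^{\ww{\phi}}$ enters). Lemma \ref{JMlem1} forces $\#_1x^{(n)}[0,n-1]\le\#_1C_n$, and the coordinatewise sup/inf bounds then give, for $y\in C_n$,
$$\sum_{i=0}^{n-1}\ww{\phi}(S^ix^{(n)})-\sum_{i=0}^{n-1}\ww{\phi}(S^iy)\le \#_1C_n\bigl(\mathrm{Var}\ww{\phi}([1])-\mathrm{Var}\ww{\phi}([0])\bigr)+n\mathrm{Var}\ww{\phi}([0]),$$
using $\sup\ww{\phi}([1])\ge\sup\ww{\phi}([0])$; these are exactly the $\mathrm{Var}$ terms in the hypothesis, and $\mathrm{Var}\ww{\phi}([1])\le\mathrm{Var}\ww{\phi}([0])+1$ lets one replace $\#_1C_n$ by $nd$ with the right sign. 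Equivalently, your own reduction can be closed directly by using the \emph{floor} instead of the ceiling: $S_n+k_n\ge k_n(1+\inf\ww{\phi}([1]))+(n-k_n)\inf\ww{\phi}([0])$, and the hypotheses yield $\inf\ww{\phi}([1])\ge\inf\ww{\phi}([0])-1$, whence $S_n+k_n-n\ww{P}\ge(k_n-nd)(1+\inf\ww{\phi}([1])-\inf\ww{\phi}([0]))\ge 0$. Either way, no block with property (iii) is needed.
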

\begin{proof}
  If $\kappa=\nu*B_{1/2,1/2}$ is Gibbs measure for $\widetilde{\phi}$. Because $D^{\ww{\phi}}_\kappa=D^{\ww{\phi}}$ , for any $n\in\N$, there exists $x^{(n)}\in \ww{X}$ such that
$$\sum_{i=0}^{n-1}\ww{\phi}(S^ix^{(n)})
=\max_{W\in\mathcal{L}_n(\ww{X}),\kappa(W)>0}\sup_{y\in W}\sum_{i=0}^{n-1}\widetilde{\phi}(S^iy)
\ge nD^{\widetilde{\phi}}$$ and $\kappa(x^{(n)}[0,n-1])>0$.

Since $D_\nu=D$, for any $n\in\N$, there exists $C_n\in \mathcal{L}_n(X)$ such that
$\#_1C_n=\max_{W\in \mathcal{L}_n(X), \nu(W)>0}\#_1W\ge nD=nd$.

Let $a_n=\#_1C_n-\#_1x^{(n)}[0,n-1]$. By Lemma \ref{JMlem1},
$$\kappa(x^{(n)}[0,n-1])=\sum_{W\in \mathcal{L}_n(X),W\ge x^{(n)}[0,n-1]}\nu(W)\cdot2^{\#_1W}>0.$$
So there exists $W$ with $\nu(W)>0$ such that $\#_1W\ge \#_1x^{(n)}[0,n-1]$, which implies that $a_n\ge0$.
Now fix $y\in C_n$,
\begin{equation*}
\begin{aligned}
0\le& \sum_{i=0}^{n-1}\ww{\phi}(S^ix^{(n)})-\sum_{i=0}^{n-1}\ww{\phi}(S^iy)\\
\le& \#_1x^{(n)}[0,n-1]\sup\ww{\phi}([1])+(n-\#_1x^{(n)}[0,n-1])\sup\ww{\phi}([0])\\
&-\#_1C_n\inf\ww{\phi}([1])-(n-\#_1C_n)\inf\ww{\phi}([0])\\
=&(\#_1C_n-a_n)\sup\ww{\phi}([1])+(n-\#_1C_n+a_n)\sup\ww{\phi}([0])\\
&-\#_1C_n\inf\ww{\phi}([1])-(n-\#_1C_n)\inf\ww{\phi}([0])\\
=&\#_1C_n(\mathrm{Var}\ww{\phi}([1])-\mathrm{Var}\ww{\phi}([0]))\\
&+n\mathrm{Var}\ww{\phi}([0])-a_n(\sup\ww{\phi}([1])-\sup\ww{\phi}([0]))\\
\le&\#_1C_n(\mathrm{Var}\ww{\phi}([1])-\mathrm{Var}\ww{\phi}([0]))+n\mathrm{Var}\ww{\phi}([0]).
\end{aligned}
\end{equation*}

Therefore,
\begin{equation*}
\begin{aligned}
c^{-1}\le &\kappa(C_n)\cdot2^{n\ww{P}-\sum_{i=0}^{n-1}\ww{\phi}(S^iy)}\\
\le &\kappa(C_n)\cdot2^{n\ww{P}-\sum_{i=0}^{n-1}\ww{\phi}(S^ix^{(n)})}\\
&\cdot 2^{\#_1C_n(\mathrm{Var}\ww{\phi}([1])-\mathrm{Var}\ww{\phi}([0]))+n\mathrm{Var}\ww{\phi}([0])}\\
\le &\nu(C_n)\cdot2^{-\#_1C_n}\cdot2^{n\ww{P}-nd^{\ww{\phi}}}\\
&\cdot 2^{\#_1C_n(\mathrm{Var}\ww{\phi}([1])-\mathrm{Var}\ww{\phi}([0]))+n\mathrm{Var}\ww{\phi}([0])}\\
\le &\nu(C_n)\cdot2^{-nd(1-\mathrm{Var}\ww{\phi}([1])+\mathrm{Var}\ww{\phi}([0]))+n\ww{P}-nd^{\ww{\phi}}+n\mathrm{Var}\ww{\phi}([0])}\\
\le &\nu(C_n).
\end{aligned}
\end{equation*}

By Lemma \ref{JMlem2}, $\nu$ is atomic, which is a contradiction.
\end{proof}

If $\ww{\phi}=a_0\mathbbm{1}_{[0]}+a_1\mathbbm{1}_{[1]}$ for some $a_0\le a_1$, then $\mathrm{Var}\ww{\phi}([0])=\mathrm{Var}\ww{\phi}([1])=0$. So we have the following corollary.

\begin{corollary}\label{maincor}
  For the hereditary closure $(\ww{X},S)$ of a subshift $(X,S)$, non-atomic measure $\nu\in\mathcal{M}^e(X,S)$ with $D_\nu=D$ and $\kappa=\nu*B_{1/2,1/2}\in\mathcal{M}^e(\ww{X},S)$, suppose that $\ww{\phi}=a_0\mathbbm{1}_{[0]}+a_1\mathbbm{1}_{[1]}$ with $a_0\le a_1$ and $D^{\ww{\phi}}_\kappa=D^{\ww{\phi}}$. If $\ww{P}\le d+d^{\ww{\phi}}$, then $\kappa$ is not the Gibbs measure for $\ww{\phi}$.
\end{corollary}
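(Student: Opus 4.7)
The plan is to obtain this corollary as an immediate specialization of Theorem \ref{mainth}, exploiting the fact that a function which is constant on each of the one-letter cylinders $[0]$ and $[1]$ has zero oscillation on both cylinders. This collapses every variation term in the hypotheses of Theorem \ref{mainth} to zero.

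First I would record that for $\ww{\phi}=a_0\mathbbm{1}_{[0]}+a_1\mathbbm{1}_{[1]}$ one has $\ww{\phi}(x)=a_0$ for every $x\in[0]$ and $\ww{\phi}(x)=a_1$ for every $x\in[1]$, so
\begin{equation*}
\mathrm{Var}\ww{\phi}([0])=\mathrm{Var}\ww{\phi}([1])=0,\qquad \sup\ww{\phi}([0])=a_0,\qquad \sup\ww{\phi}([1])=a_1.
\end{equation*}
Substituting into the three quantitative hypotheses of Theorem \ref{mainth} reduces them to: (i) $\ww{P}\le d+d^{\ww{\phi}}$, which is the standing hypothesis of the corollary; (ii) $a_1\ge a_0$, which is assumed; and (iii) $0\le 0+1$, which is trivial. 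Note that the ``$+1$'' in the third hypothesis of Theorem \ref{mainth} is just $-\log(1-q)$ specialized to $q=1/2$, with $\log$ in base $2$, consistent with the convention used throughout the paper (as one sees from the Gibbs normalization $c^{-1}\le\mu(x[0,n-1])/2^{\sum\ww{\phi}-n\ww{P}}\le c$).

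Since the remaining structural assumptions---$\nu$ non-atomic and ergodic with $D_\nu=D$, $\kappa=\nu*B_{1/2,1/2}$, and $D^{\ww{\phi}}_\kappa=D^{\ww{\phi}}$---are imposed in the corollary exactly as in Theorem \ref{mainth}, the theorem applies directly and yields that $\kappa$ is not a Gibbs measure for $\ww{\phi}$. There is no genuine obstacle here: the corollary merely packages the quantitatively unwieldy hypotheses of Theorem \ref{mainth} into a clean statement for two-valued $\ww{\phi}$. The only point to verify carefully is that the signs and the logarithmic base line up correctly when the oscillations are set to zero, which the check above confirms.
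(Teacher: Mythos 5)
Your proposal is correct and matches the paper's own derivation: the paper obtains Corollary \ref{maincor} from Theorem \ref{mainth} by exactly the observation that $\mathrm{Var}\ww{\phi}([0])=\mathrm{Var}\ww{\phi}([1])=0$ for $\ww{\phi}=a_0\mathbbm{1}_{[0]}+a_1\mathbbm{1}_{[1]}$, which reduces the three hypotheses to $\ww{P}\le d+d^{\ww{\phi}}$, $a_1\ge a_0$, and a triviality. Your additional check of the base-$2$ convention for $-\log(1-q)$ at $q=1/2$ is consistent with the paper's normalization.
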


More than Lemma \ref{JMlem1}, we prove that:

\begin{lemma}\label{JMlem1.1}
  Let $\nu\in\mathcal{M}(X,S)$ and $0<q<1$. Then for $\kappa=\nu * B_{q,1-q}$, we have
  $$\kappa(C)=\sum_{C\le C'\in\mathcal{L}(X)}\nu(C')\cdot q^{\#_1C'-\#_1C}(1-q)^{\#_1C}$$
  for each $C\in\mathcal{L}(\ww{X})$.
\end{lemma}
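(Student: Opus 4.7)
The plan is to compute $\kappa(C)$ directly from the definition $\kappa = (\nu \otimes B_{q,1-q}) \circ Q^{-1}$, following the same bookkeeping as in Lemma \ref{JMlem1} but tracking the two Bernoulli weights $q$ and $1-q$ separately instead of the symmetric $2^{-\#_1 C'}$.

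Fix $C = [c_0 \cdots c_{n-1}] \in \mathcal{L}(\ww{X})$. I first describe $Q^{-1}(C)$ as a disjoint union indexed by the $X$-word $C' = x[0,n-1]$. A pair $(x,y) \in X \times \{0,1\}^\Z$ lies in $Q^{-1}(C)$ iff $x_i y_i = c_i$ for $0 \le i \le n-1$. Whenever $c_i = 1$ we must have $x_i = 1$, so only those $C' \in \mathcal{L}_n(X)$ with $C \le C'$ contribute. Conversely, for such a $C'$ the $y$-coordinates are completely determined on the positions where $c'_i = 1$ (namely $y_i = c_i$ there) and are unconstrained on the positions where $c'_i = 0$. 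This gives the disjoint decomposition
\[
Q^{-1}(C) = \bigsqcup_{C \le C' \in \mathcal{L}_n(X)} [C']_X \times A_{C,C'},
\]
where $A_{C,C'} \subset \{0,1\}^\Z$ is a cylinder set fixing the $\#_1 C'$ coordinates $\{i : c'_i = 1\}$.

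The next step is to evaluate $B_{q,1-q}(A_{C,C'})$. Among the $\#_1 C'$ constrained coordinates, exactly $\#_1 C$ of them require $y_i = 1$ (those where $c_i = 1$) and the remaining $\#_1 C' - \#_1 C$ require $y_i = 0$. Since $B_{q,1-q}([1]) = 1-q$ and $B_{q,1-q}([0]) = q$, independence gives
\[
B_{q,1-q}(A_{C,C'}) = (1-q)^{\#_1 C} \, q^{\#_1 C' - \#_1 C}.
\]
Plugging in and using $(\nu \otimes B_{q,1-q})([C']_X \times A_{C,C'}) = \nu(C') \cdot B_{q,1-q}(A_{C,C'})$ by the product structure yields the claimed formula.

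This is essentially routine bookkeeping; there is no real obstacle. The only point worth double-checking is the combinatorial identity that the number of constrained $y$-coordinates with value $1$ is exactly $\#_1 C$ (not $\#_1 C'$), which follows because the requirement $x_i y_i = c_i$ with $x_i = 1$ forces $y_i = c_i$, so $y_i = 1$ precisely at the $1$-positions of $C$ itself.
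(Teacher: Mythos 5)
Your proof is correct and follows essentially the same route as the paper's: the same decomposition of $Q^{-1}(C)$ over words $C'\ge C$, with the same observation that $y_i$ is forced to equal $c_i$ exactly at the positions where $c'_i=1$ and is free where $c'_i=0$. The only difference is cosmetic: the paper sums over all full-length completions $D$ and collapses the resulting binomial sum $\sum_{i}\binom{n-\#_1C'}{i}q^{n-\#_1C-i}(1-q)^{\#_1C+i}$, whereas you marginalize the unconstrained coordinates away at the outset by working with the cylinder $A_{C,C'}$, which is a slightly cleaner bookkeeping of the identical computation.
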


\begin{proof}
  For any $n\in\N$ and any $C\in \mathcal{L}_n(\ww{X})$, we have
  $$Q^{-1}(C)=\bigcup_{C\le C'\in\mathcal{L}(X)}\bigcup_{C'\cdot D=C,D\in\mathcal{L}_n(\{0,1\}^\Z)}C'\times D.$$
  For each $C'\ge C$, if $C'[i]=1$, $D[i]=C[i]$. So $\#\{i: D[i]=1 \text{ and } C'[i]=1\}=\#_1C$.
  On the other hand, if $C'[i]=0$, $D[i]$ is arbitrary. So $\#\{i: D[i]=1 \text{ and } C'[i]=0\}$ is ranged over $0$ to $n-\#_1C'$.
  Then
  \begin{equation*}
    \begin{aligned}
      \kappa(C)=&\sum_{C\le C'\in\mathcal{L}(X)}\sum_{C'\cdot D=C,D\in\mathcal{L}_n(\{0,1\}^\Z)}
      \nu(C')\cdot q^{n-\#_1D}(1-q)^{\#_1D}\\
      =&\sum_{C\le C'\in\mathcal{L}(X)}\nu(C')
      \sum_{i=0}^{n-\#_1C'}\binom{n-\#_1C'}{i}q^{n-\#_1C-i}(1-q)^{\#_1C+i}\\
      =&\sum_{C\le C'\in\mathcal{L}(X)}\nu(C')q^{\#_1C'-\#_1C}(1-q)^{\#_1C}.
    \end{aligned}
  \end{equation*}
\end{proof}

\begin{proof}[Proof of Theorem \ref{t:1.1}]
	By Lemma \ref{JMlem1.1}, if $C\in\mathcal{L}_n(X)$ attaches the maximum of the number of ones, that is, $\#_1 C= \max_{W\in\mathcal{L}_n(X)}\#_1W$, then
	$$\nu * B_{q,1-q}(C)=\nu(C)\cdot (1-q)^{\#_1C}.$$
	
	Therefore, Theorem \ref{t:1.1} can be proved by a similar proof of Theorem \ref{mainth}.
\end{proof}

\newcommand{\supp}[1]{\text{supp}(#1)}
\newcommand{\de}{\boldsymbol{\delta}}

\section{$\B$-free systems}\label{Bsys}

In this section, we consider some $\B$-free systems as an application of Theorem \ref{t:1.1} and Theorem \ref{mainth}. Firstly, we show some basic notions about $\B$-free systems.

Let $\B=\{b_1,b_2,\cdots \}$ be an infinite subset of $\{2,3,\cdots \}$.
In the rest of this section, we always assume that $\B$ satisfies condition (\ref{condition1}).

For $A\subset \Z$, define the densities of the positive part of $A$:
$$\text{lower density: }\underline{d}(A)=\liminf_{N\to\infty}\frac{\#A\cap[1,N]}{N},$$
$$\text{upper density: }\bar{d}(A)=\limsup_{N\to\infty}\frac{\#A\cap[1,N]}{N}.$$
If $\underline{d}(A)=\bar{d}(A)$, we set $d(A):=\underline{d}(A)=\bar{d}(A)$, called the \emph{density} of $A$.
Also, the lower logarithmic density $\underline{\de}(A)$ and the upper logarithmic density $\bar{\de}(A)$ of $A$ is defined as follows:
$$\underline{\de}(A)=\liminf_{N\to\infty}\frac{1}{\log N}\sum_{1\le a\le N,a\in A}\frac1a,$$
$$\bar{\de}(A)=\limsup_{N\to\infty}\frac{1}{\log N}\sum_{1\le a\le N,a\in A}\frac1a.$$
If $\underline{\de}(A)=\bar{\de}(A)$, we set $\de(A):=\underline{\de}(A)=\bar{\de}(A)$, called the \emph{logarithmic density} of $A$.

Let $\mathcal{M}_\B=\bigcup_{b\in\B}b\Z$, and $\mathcal{F}_\B=\Z\setminus\mathcal{M}_\B$. By our assumptions of $\B$ and \cite{Erd67,Hal83}, the density of $\mathcal{M}_\B$ exists, which means that $\B$ is \emph{Besicovitch}. In Section 2 of \cite{Dym18}, since $\sum_{b\in\B}\frac1b<\infty$(called \emph{thin} in \cite{Dym18}), $\B$ has light tails(See the definition in \cite{Dym18}), which implies that $\B$ is \emph{taut}, that is, $\de(\mathcal{M}_\B)>\de(\mathcal{M}_{\B\setminus\{b\}})$ for any $b\in\B$(\cite{Hal96}).

Let $\eta=\mathbbm{1}_{\mathcal{F}_\B}\in\{0,1\}^\Z$, that is,

$$\eta[n]=1 \text{ if and only if } n\in\mathcal{F}_\B.$$

Let
$$X_\eta=\{y\in\{0,1\}^\Z: \text{ for any }i,j\in\N,\quad y[i,i+j]=\eta[k,k+j]\text{ for some }k\}.$$
Recall that a point $y\in\{0,1\}^\Z$ is \emph{$\B$-admissible} if $\#(\supp y\text{ mod }b)<b$ for each $b\in\B$, where $\supp y=\{n\in\Z: y[n]=1\}$.

\begin{lemma}[\cite{Erd67, Sar12}]
  The space $X_\eta=X_\B:=\{y\in\{0,1\}^\Z: y\text{ is }\B\text{-admissible}\}$.
\end{lemma}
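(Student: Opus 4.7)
First, I would split the equality into the two inclusions. For $X_\eta\subseteq X_\B$, the plan is a pigeonhole argument on residue classes. Fix $y\in X_\eta$ and $b\in\B$. Because every element of $\supp{\eta}$ is $\B$-free, $\supp{\eta}$ misses every multiple of $b$; equivalently, $\eta$ never has a $1$ in the residue class $0\pmod b$. Now any finite window $y[-N,N]$ equals $\eta[k_N-N,k_N+N]$ for some $k_N$, so $\supp{y}\cap[-N,N]$ avoids the residue class $-k_N\pmod b$. As $k_N\bmod b$ ranges over a finite set, some fixed $r\in\{0,\dots,b-1\}$ appears infinitely often; for this $r$, no $n\in\supp{y}$ can satisfy $n\equiv r\pmod b$, which is exactly $\B$-admissibility for this $b$. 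Since $b\in\B$ is arbitrary, $y\in X_\B$.

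For the reverse inclusion $X_\B\subseteq X_\eta$, the plan is more delicate. Given a $\B$-admissible $y$ and a finite window $w=y[0,L-1]$, I would construct $p\in\Z$ so that $\eta[p+i]=w_i$ for all $i$; this amounts to $p+i\in\mathcal F_\B$ when $w_i=1$ and $p+i\in\mathcal M_\B$ when $w_i=0$. Choose a finite $\B_F\subset\B$ containing all $b\le L$. The $\B$-admissibility of $w$ gives, for each $b\in\B_F$, a residue $r_b$ avoided by $\supp{w}\bmod b$. For each zero position $i$ of $w$, pick a fresh element $\tilde b_i\in\B\setminus\B_F$ with $\tilde b_i>L$ and all the $\tilde b_i$ distinct; this is legal since $\B$ is infinite. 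Pairwise coprimality of $\B$ lets the Chinese Remainder Theorem combine the conditions $p\equiv r_b\pmod b$ ($b\in\B_F$) and $p\equiv -i\pmod{\tilde b_i}$ ($w_i=0$) into a single arithmetic progression of candidates; the choice $\tilde b_i>L$ automatically prevents these divisibility requirements from spoiling any $p+j$ with $w_j=1$.

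The main obstacle is the infinite tail $\B\setminus(\B_F\cup\{\tilde b_i:w_i=0\})$: inside the CRT progression we still have to ensure $b\nmid p+i$ for every $i\in\supp{w}$ and every $b$ not yet used. The set of $p$ violating this for a given pair has relative density $1/b$ in the progression, so a union bound gives a total bad density at most $L\sum_{b\in\B\setminus\B_F}1/b$. Here the hypothesis $\sum_{b\in\B}1/b<\infty$ from condition (\ref{condition1}) is exactly what is needed: by taking $\B_F$ large enough, this tail sum drops strictly below $1$, leaving a positive-density set of good $p$ within the progression, and any such $p$ places the window $w$ inside $\eta$. This last density estimate is the step I expect to be most subtle to turn into a fully rigorous argument, since it is also where the thinness of $\B$ (as opposed to merely its Besicovitch property) gets used in an essential way.
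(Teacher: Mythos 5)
The paper does not prove this lemma; it is quoted from \cite{Erd67,Sar12} (see also \cite{Abd13}), so your argument can only be compared with the standard proof in those references --- which it essentially reproduces, correctly in outline. The pigeonhole argument for $X_\eta\subseteq X_\B$ is exactly right, and the CRT-plus-density construction for the reverse inclusion is the classical Erd\H{o}s argument. Two points need tightening. First, a sign slip: if $r_b$ is a residue class avoided by $\mathrm{supp}(y)$ mod $b$, the congruence you want is $p\equiv -r_b\pmod b$, so that $p+i\equiv i-r_b\not\equiv 0\pmod b$ for every $i$ in the support of the window. Second, and more substantively, the union bound over the infinite tail is not automatic: an infinite union of arithmetic progressions of densities $1/b$ need not have upper density bounded by $\sum 1/b$ (each could contribute finitely many extra points near the origin, and there are infinitely many of them). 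Here the bound does hold because, for $p$ ranging over $[1,N]$, only moduli $b\le N+L$ can contribute a nonempty bad set, and $\#\{b\in\B:b\le N\}=o(N)$, which is itself a consequence of $\sum_{b\in\B}1/b<\infty$; with that observation the relative upper density of the bad set inside the CRT progression is at most $L\sum_{b\in\B\setminus\B_F}1/b<1$, so a good $p$ exists. This is precisely where the thinness of $\B$ enters, as you anticipated; once these two details are supplied the proof is complete.
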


In particular, $X_\eta$ is hereditary, that is $\ww{X}_\eta=X_\eta$.

The following theorems and proposition are proved in \cite{Abd13,Dav36,Dav51,Prz15}.

\begin{theorem}[Theorem 5.3 in \cite{Abd13}]
  The topological entropy of the subshift $X_\B$ is given by
  $$h_{\mathrm{top}}(X_\B)=\prod_{i\in\N}\left(1-\frac{1}{b_i}\right).$$
\end{theorem}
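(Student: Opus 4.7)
The plan is to prove matching upper and lower bounds on $\frac{1}{n}\log\#\mathcal{L}_n(X_\B)$, both converging to $\prod_i(1-1/b_i)$; each argument leverages the pairwise coprime structure of $\B$ through the Chinese Remainder Theorem.

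For the lower bound, I would use the hereditary property of $X_\B=X_\eta$ together with the fact that $\eta=\mathbbm{1}_{\mathcal{F}_\B}\in X_\B$: every $y\leq\eta$ again lies in $X_\B$. Hence substituting any sub-configuration of $\eta[0,n-1]$ into the window $[0,n-1]$ produces $2^{\#(\mathcal{F}_\B\cap[0,n-1])}$ distinct words in $\mathcal{L}_n(X_\B)$. Under condition (\ref{condition1}), $\B$ is Besicovitch, so $d(\mathcal{F}_\B)$ exists; a standard inclusion–exclusion argument, valid by pairwise coprimality, identifies this density with $\prod_i(1-1/b_i)$. Taking $\frac{1}{n}\log$ and passing to the limit yields $h_{\mathrm{top}}(X_\B)\geq\prod_i(1-1/b_i)$.

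For the upper bound, I would approximate $X_\B$ from above by the finite truncations $X_{\B_k}$ with $\B_k=\{b_1,\ldots,b_k\}$. Since $\B$-admissibility implies $\B_k$-admissibility, $X_\B\subset X_{\B_k}$ and $h_{\mathrm{top}}(X_\B)\leq h_{\mathrm{top}}(X_{\B_k})$. To estimate $\#\mathcal{L}_n(X_{\B_k})$ directly, observe that $w\in\mathcal{L}_n(X_{\B_k})$ if and only if for each $b\in\B_k$ there is a residue $r_b\pmod b$ on which $w$ vanishes throughout $[0,n-1]$. Summing over the $N_k=\prod_{i\leq k}b_i$ tuples $(r_b)_{b\in\B_k}$, and using CRT with pairwise coprimality to conclude that the number of free positions equals $n\prod_{i\leq k}(1-1/b_i)+O(N_k)$ uniformly in $(r_b)$, one obtains $\#\mathcal{L}_n(X_{\B_k})\leq N_k\cdot 2^{n\prod_{i\leq k}(1-1/b_i)+O(N_k)}$, whence $h_{\mathrm{top}}(X_{\B_k})\leq\prod_{i\leq k}(1-1/b_i)$ after $n\to\infty$; finally $k\to\infty$ closes the bound.

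The main obstacle is the order of the two limits in the upper bound: the CRT boundary error $O(N_k)$ depends on $k$ and blows up, so $n$ must be sent to infinity first, absorbing the error for each fixed $k$, and only then may $k\to\infty$, relying on convergence of $\prod_{i\leq k}(1-1/b_i)$ to $\prod_i(1-1/b_i)$ forced by (\ref{condition1}). An analogous monotone-limit step, using that condition (\ref{condition1}) ensures $\B$ is Besicovitch, is what justifies the density identity $d(\mathcal{F}_\B)=\prod_i(1-1/b_i)$ used in the lower bound.
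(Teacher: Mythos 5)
Your argument is correct, and since the paper only cites this result from \cite{Abd13} without reproving it, the natural comparison is with the counting scheme the paper does carry out in Proposition \ref{PonX} (and borrows from the proof of Proposition K in \cite{Dym18}): your upper bound via the truncations $\B_k$, the choice of avoided residues $(r_b)$, and the free positions counted by CRT is exactly that scheme, and your lower bound via heredity below $\eta$ together with $d(\mathcal{F}_\B)=\prod_i(1-1/b_i)$ (Davenport--Erd\H{o}s plus the Besicovitch property, which under $\sum_{b\in\B}1/b<\infty$ follows from the tail estimate $\bar d(\mathcal{M}_\B)\le d(\mathcal{M}_{\B_K})+\sum_{i>K}1/b_i$) is the standard complement. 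Only the "if and only if" in your upper bound is a mild overstatement --- you only need that words in $\mathcal{L}_n(X_{\B_k})$ avoid some residue class mod each $b\le b_k$, not the converse --- and this does not affect the proof.
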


\begin{theorem}[\cite{Dav36,Dav51}]\label{ddK}
  For any $\B\subset\N$, the logarithmic density $\de(\mathcal{M}_\B)$ of $\mathcal{M}_\B$ exist. Moreover,
  $$\de(\mathcal{M}_\B)=\underline{d}(\mathcal{M}_\B)=\lim_{K\rightarrow\infty}d(\mathcal{M}_{\{b\in\B:b\le K\}}).$$
\end{theorem}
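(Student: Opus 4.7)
The plan is to prove the chain
\[
\alpha \;\le\; \underline{d}(\mathcal{M}_\B) \;\le\; \underline{\de}(\mathcal{M}_\B) \;\le\; \bar{\de}(\mathcal{M}_\B) \;\le\; \alpha,
\]
with $\alpha := \lim_{K\to\infty} d(\mathcal{M}_{\B_K})$ and $\B_K := \{b \in \B : b \le K\}$; once all four quantities coincide, the existence of $\de(\mathcal{M}_\B)$ and the required identities drop out simultaneously. The limit $\alpha$ itself exists because $\mathcal{M}_{\B_K}$ is a finite union of arithmetic progressions, so $d(\mathcal{M}_{\B_K})$ exists by inclusion-exclusion (under the pairwise-coprime hypothesis of (\ref{condition1}) it equals $1 - \prod_{b \in \B_K}(1 - 1/b)$), and the sequence is monotone non-decreasing in $K$ and bounded by $1$.

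The leftmost inequality is immediate from $\mathcal{M}_{\B_K} \subset \mathcal{M}_\B$, which forces $\underline{d}(\mathcal{M}_\B) \ge d(\mathcal{M}_{\B_K})$ for every $K$. The two middle inequalities $\underline{d}(A) \le \underline{\de}(A) \le \bar{\de}(A)$ hold for any $A \subset \N$ by a standard Abel summation argument comparing the partial counting function $\#(A \cap [1,N])$ with its weighted version $\sum_{a \in A, a \le N} 1/a$, after division by $\log N$.

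The rightmost inequality $\bar{\de}(\mathcal{M}_\B) \le \alpha$ is where the real work lies. I would split $\mathcal{M}_\B = \mathcal{M}_{\B_K} \cup (\mathcal{M}_\B \setminus \mathcal{M}_{\B_K})$ and observe that the second piece is contained in $\bigcup_{b \in \B,\, b > K} b\Z$. Since $\mathcal{M}_{\B_K}$ has a natural density, it also has the same logarithmic density, namely $d(\mathcal{M}_{\B_K}) \le \alpha$. For the tail, a direct estimate
\[
\sum_{\substack{n \le N \\ n \in \mathcal{M}_\B \setminus \mathcal{M}_{\B_K}}} \frac{1}{n}
\;\le\; \sum_{\substack{b \in \B \\ K < b \le N}} \frac{1}{b}\sum_{m \le N/b}\frac{1}{m}
\;\le\; \log N \cdot \sum_{\substack{b \in \B \\ b > K}} \frac{1}{b}
\]
yields $\bar{\de}(\mathcal{M}_\B \setminus \mathcal{M}_{\B_K}) \le \sum_{b \in \B, b > K} 1/b$. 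Adding the two contributions and letting $K \to \infty$ makes the tail vanish by condition (\ref{condition1}), completing the sandwich.

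The main obstacle is precisely this tail estimate: it leans essentially on $\sum_{b \in \B} 1/b < \infty$, supplied by (\ref{condition1}). Without that summability, the Davenport--Erd\H{o}s theorem still holds, but the crude union bound above must be replaced by the subtler sieve-theoretic arguments of the original papers. In the setting of this section, however, the summability hypothesis lets us bypass those subtleties entirely, so the proof reduces to the bookkeeping sketched above.
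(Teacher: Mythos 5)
The paper offers no proof of this theorem at all---it is imported wholesale from Davenport--Erd\H{o}s \cite{Dav36,Dav51}---so any comparison is between your argument and the original sieve-theoretic one. Your sandwich $\alpha\le\underline{d}(\mathcal{M}_\B)\le\underline{\de}(\mathcal{M}_\B)\le\bar{\de}(\mathcal{M}_\B)\le\alpha$ is sound as far as it goes: the monotone limit $\alpha$ exists, the inclusion $\mathcal{M}_{\B_K}\subset\mathcal{M}_\B$ gives the left inequality, Abel summation gives the middle ones, and the tail bound
$\sum_{n\le N,\,n\in\mathcal{M}_\B\setminus\mathcal{M}_{\B_K}}1/n\le(1+\log N)\sum_{b\in\B,\,b>K}1/b$
(the harmless $1+\log N$ in place of your $\log N$) combined with subadditivity of $\bar{\de}$ closes the chain. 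This is a genuinely more elementary route than the original papers, and it covers every $\B$ actually used in Section 5, since condition (\ref{condition1}) is a standing hypothesis there.

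The one substantive caveat is that the theorem as stated asserts the conclusion for \emph{any} $\B\subset\N$, and your proof does not deliver that: the entire weight of the rightmost inequality rests on $\sum_{b\in\B}1/b<\infty$, and when that series diverges the union bound $\bar{\de}\bigl(\mathcal{M}_\B\setminus\mathcal{M}_{\B_K}\bigr)\le\sum_{b>K}1/b$ is vacuous, so the tail does not vanish as $K\to\infty$. You flag this yourself, correctly, so there is no hidden error---but strictly speaking you have proved a special case of the cited theorem, not the theorem. For the purposes of this paper that special case suffices (Theorem \ref{ddK} is invoked only inside Proposition \ref{PonX}, where (\ref{condition1}) holds), and one could reasonably argue the statement should have been weakened to match; as a proof of the statement as written, however, the general case still has to be outsourced to \cite{Dav36,Dav51}.
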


\begin{proposition}[Proposition K in \cite{Dym18}]
  For any $\B\subset\N$, we have $h_{\mathrm{top}}(\ww{X}_\eta)=h_{\mathrm{top}}(X_\B)=\de(\mathcal{F}_\B)$.
\end{proposition}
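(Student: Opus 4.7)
The plan is to establish the two equalities $h_{\mathrm{top}}(\ww{X}_\eta)=h_{\mathrm{top}}(X_\B)$ and $h_{\mathrm{top}}(X_\B)=\de(\mathcal{F}_\B)$ separately. The first is bookkeeping from the preceding lemma, while the second requires combining the Abdalaoui entropy formula with a density calculation based on pairwise coprimality and Theorem \ref{ddK}.

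For the first equality I would invoke the preceding lemma identifying $X_\eta$ with the space $X_\B$ of $\B$-admissible sequences. Since $\B$-admissibility is preserved under the coordinatewise order (if $y'\le y$ then $\supp{y'}\subseteq\supp y$, so $\#(\supp{y'}\bmod b)\le\#(\supp y\bmod b)<b$ for every $b\in\B$), the space $X_\B$ is hereditary, whence $\ww{X}_\eta=X_\eta=X_\B$ and the two topological entropies coincide trivially.

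For $h_{\mathrm{top}}(X_\B)=\de(\mathcal{F}_\B)$ I would compute the logarithmic density explicitly and match it against the product in Abdalaoui's theorem. Fix $K$ and write $\B_K=\{b\in\B:b\le K\}$. Pairwise coprimality implies $b\Z\cap b'\Z=bb'\Z$ for distinct $b,b'\in\B_K$, and iterating gives the analogous formula for any finite subcollection, so ordinary inclusion--exclusion on finitely many arithmetic progressions yields
$$d(\mathcal{M}_{\B_K})=1-\prod_{b\in\B_K}\Bigl(1-\frac1b\Bigr).$$
Theorem \ref{ddK} then passes to the limit in $K$:
$$\de(\mathcal{M}_\B)=\lim_{K\to\infty}d(\mathcal{M}_{\B_K})=1-\prod_{b\in\B}\Bigl(1-\frac1b\Bigr),$$
and taking complements gives $\de(\mathcal{F}_\B)=\prod_{b\in\B}(1-1/b)$. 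This equals $h_{\mathrm{top}}(X_\B)$ by Abdalaoui's theorem, completing the argument.

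The main obstacle is absorbed into Theorem \ref{ddK}: its assertion that $\de(\mathcal{M}_\B)$ coincides with the limit of the densities of the truncations is the only non-elementary ingredient, and is where the Davenport arguments enter. Granting that theorem, the remaining work is routine: convergence of $\prod_{b\in\B}(1-1/b)$ to a strictly positive value follows from $\sum_{b\in\B}1/b<\infty$, and the finite inclusion--exclusion step is purely combinatorial.
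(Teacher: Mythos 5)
The paper does not prove this statement at all --- it is quoted verbatim as Proposition K of \cite{Dym18} and used as a black box --- so there is no internal proof to compare against. Judged on its own terms, your argument is correct \emph{under the standing hypothesis} (\ref{condition1}) that is in force throughout Section 5: with $\B$ infinite, pairwise coprime and thin, the identification $X_\eta=X_\B$ holds, $X_\B$ is hereditary so $\ww{X}_\eta=X_\eta$, inclusion--exclusion via the Chinese Remainder Theorem gives $d(\mathcal{M}_{\B_K})=1-\prod_{b\in\B_K}(1-1/b)$, Theorem \ref{ddK} passes to the limit, and matching against Abdalaoui's product formula yields exactly the identity $d=d(\mathcal{F}_\B)=h_{\mathrm{top}}(X_\B)=\prod_i(1-1/b_i)$ that the paper records immediately after the proposition. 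So for the purposes this paper puts the proposition to, your derivation is adequate.

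The gap is that the proposition as stated is for \emph{any} $\B\subset\N$, and every step of your argument uses the special hypotheses. Without pairwise coprimality the product formula for $d(\mathcal{M}_{\B_K})$ is simply false (e.g.\ $\B=\{2,4\}$ has $d(\mathcal{M}_\B)=1/2$, not $5/8$); without the Erd\H{o}s conditions the lemma $X_\eta=X_\B$ fails in general, $X_\eta$ need not be hereditary, and the first equality $h_{\mathrm{top}}(\ww{X}_\eta)=h_{\mathrm{top}}(X_\B)$ becomes a genuinely nontrivial assertion rather than bookkeeping; and Abdalaoui's Theorem 5.3 is itself only proved under those same conditions, so invoking it both restricts the generality and is somewhat circular in spirit (its proof is essentially the same block count you would otherwise have to do by hand). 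The actual proof of Proposition K in \cite{Dym18} is a direct counting argument on admissible blocks, estimating $\#\mathcal{L}_n$ between $2^{p(n)}$ and $\prod_{k\le K}b_k\cdot 2^{N(1-d_K)}$ and using Davenport--Erd\H{o}s to control the truncated densities --- the same scheme this paper adapts in the proof of Proposition \ref{PonX}. If you only intend to cover the case the paper needs, say so explicitly; as a proof of the stated proposition your argument does not go through.
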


By our assumption of $\B$, for subshift $(X_\B, S)$, we have
$$d=d(\mathcal{F}_\B)=h_{\mathrm{top}}(X_\B)=\prod_{i\in\N}\left(1-\frac{1}{b_i}\right).$$

Recall some known facts about the dynamical systems associated to $\B$-free numbers.
Let
$$\Omega:=\prod_{i\ge 1}\Z/b_i\Z=\{\omega=(\omega(1),\omega(2),\dots):\omega(i)\in\Z/b_i\Z,\, i=1,2,\dots\}.$$
With the product topology and the coordinatewise addition $\Omega$ becomes a compact metrizable Abelian group. Let $\mathbb{P}$ be the normalized Haar measure of $\Omega$ (which is the product of uniform measures on $\Z/b_k\Z$). Denote $T:\Omega\To\Omega$ the homeomorphism given by
$$T\omega=(\omega(1)+1,\omega(2)+1,\dots)$$
where $\omega=(\omega(1),\omega(2),\dots)$. It is known that $(\Omega, T)$ has zero entropy. Define $\varphi:\Omega\To\{0,1\}^\Z$ by
$$\varphi(\omega)(n)=\left\{
	\begin{aligned}
	1,&\text{ if for any }i\ge 1,\omega(i)+n\neq 0 \text{ mod } b_i,\\
	0,&\text{ otherwise.}
	\end{aligned}
	\right.
$$
It is not hard to see that $\varphi$ is Borel, equivariant(that is, $\varphi\circ T=S\circ\varphi$) and $\eta=\varphi(0,0,\dots)$. Let $\nu_\eta=\varphi_*(\mathbb{P}):=\mathbb{P}\circ\varphi^{-1}$ be the image of $\mathbb{P}$ via $\varphi$, which is called the \emph{Mirsky measure} of $(X_\eta,S)$. By \cite{Dym18}, $\eta$ is generic point of the Mirsky measure $\nu_\eta$. So for any $A\subset X_\eta$,
$$\nu_\eta(A)=\lim_{n\to\infty}\frac1n\sum_{i=0}^{n-1}\mathbbm{1}_A(S^i\eta).$$

Since $\B$ is infinite, by the tautness of $\B$ and Proposition 3.5 in \cite{Prz15}, $\nu_\eta$ is non-atomic. Also, in \cite{Kel19}, it is proved that the tautness of $\B$ implies that $\nu_\eta$ is full support on $X_\eta$.

Now, we turn to focus on some $\B$-free systems, and show that for some $\phi$, its unique equilibrium state is not Gibbs measure.

Theorem \ref{t:1.2} will be proved by several steps. First, to calculate the topological pressure on $(X_\eta,S)$, we need the following lemma, proved in \cite{Hal83}.

\begin{lemma}[\cite{Hal83}, p.242]\label{dK}
  For any $b_k$, $k\ge1$, any $r_k\in\Z/b_k\Z$, and $K\ge 1$, we have
  $$d\left(\bigcup_{k=1}^K(b_k\Z+r_k)\right)\ge d(\mathcal{M}_{\{b_1,\cdots, b_K\}}).$$
\end{lemma}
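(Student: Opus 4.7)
Plan: Since each $b_k\Z+r_k$ has period $b_k$, both sets in the inequality are periodic with period $L := \mathrm{lcm}(b_1,\dots,b_K)$, so their densities reduce to counting residues in $\{0,\dots,L-1\}$. Passing to $G := \Z/L\Z$ and writing $H_k := b_k\Z/L\Z$ (the unique subgroup of order $L/b_k$), I must show $|\bigcup_{k=1}^K (H_k + g_k)| \ge |\bigcup_{k=1}^K H_k|$ for any $g_k \in G$ obtained from $r_k$.

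I would then apply inclusion--exclusion to both unions. For each nonempty $S \subseteq \{1,\dots,K\}$, the unshifted intersection $\bigcap_{k \in S} H_k$ is the subgroup $L_S\Z/L\Z$ with $L_S := \mathrm{lcm}(b_k : k \in S)$, while the shifted intersection $\bigcap_{k \in S} (H_k + g_k)$ is either empty or a coset of that subgroup (hence of the same cardinality when nonempty). Letting $\mathcal{U}$ denote the family of $S$ for which the system $\{x \equiv g_k \bmod b_k : k \in S\}$ is inconsistent, the difference of the two union sizes equals
$$\sum_{S \in \mathcal{U}} (-1)^{|S|} \frac{L}{L_S},$$
so the claim reduces to showing this alternating sum is nonnegative. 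By the Chinese Remainder Theorem generalized to non-coprime moduli, inconsistency of $S$ is equivalent to the existence of some pair $\{i,j\} \subseteq S$ with $g_i \not\equiv g_j \pmod{\gcd(b_i,b_j)}$, so $\mathcal{U}$ is the up-set generated (inside the Boolean lattice on $[K]$) by the set $E$ of inconsistent pairs.

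The case $K=2$ is immediate: for any two subgroups $H,H'$ of $G$ and any $a,b\in G$, the intersection $(H+a) \cap (H'+b)$ is either empty or a coset of $H \cap H'$, so $|(H+a) \cup (H'+b)| = |H| + |H'| - |(H+a) \cap (H'+b)| \ge |H| + |H'| - |H \cap H'| = |H \cup H'|$. For general $K$, I would expand the indicator of $\mathcal{U}$ by inclusion--exclusion over the edges in $E$ and collect terms by $V_T := \bigcup_{e \in T} e$ for $T \subseteq E$; the inner sum $\sum_{S \supseteq V_T}(-1)^{|S|}/L_S$ admits a density interpretation as $(-1)^{|V_T|}$ times the density of integers divisible by $L_{V_T}$ but by no $b_k$ with $k \notin V_T$, and so is nonnegative in absolute value. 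Reassembling these signed pieces so that the result is still nonnegative is the main obstacle and requires careful bookkeeping on the up-set generated by $E$ in the divisor lattice of $L$.

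In the paper's $\B$-free application the set $\B$ is pairwise coprime, so $\gcd(b_i,b_j)=1$ whenever $i \ne j$; consequently $E = \emptyset$, $\mathcal{U} = \emptyset$, and the inequality is in fact an equality, with both sides equal to $1 - \prod_{k=1}^K(1 - 1/b_k)$ directly by CRT, bypassing the need for the general combinatorial identity.
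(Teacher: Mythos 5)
The paper does not prove this lemma at all: it is quoted verbatim from Halberstam--Roth \cite{Hal83} (it is the classical Rohrbach--Heilbronn inequality that the union of congruence classes has minimal density when all the classes pass through $0$), so the only comparison available is between your attempt and that citation. Your reduction to $\Z/L\Z$ with $L=\mathrm{lcm}(b_1,\dots,b_K)$, the inclusion--exclusion identity expressing the difference of the two unions as $\sum_{S\in\mathcal{U}}(-1)^{|S|}L/L_S$ over the family $\mathcal{U}$ of inconsistent index sets, the characterization of $\mathcal{U}$ as the up-set generated by the inconsistent pairs (generalized CRT), and the $K=2$ case are all correct. But the nonnegativity of that alternating sum for $K\ge 3$ \emph{is} the lemma --- it is the entire content of the Rohrbach--Heilbronn theorem --- and your proposal does not establish it. The route you sketch (expanding the indicator of the up-set over the edge set $E$ and collecting terms by $V_T$) produces signed quantities $(-1)^{|V_T|}\,d\bigl(L_{V_T}\Z\setminus\bigcup_{k\notin V_T}b_k\Z\bigr)$ whose signs alternate with $|V_T|$, and you explicitly concede that reassembling them into something nonnegative is ``the main obstacle.'' A naive induction on $K$ also fails to close, because after intersecting with $b_K\Z+r_K$ the inductive hypothesis points in the wrong direction. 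So as a proof of the lemma as stated, there is a genuine gap at exactly the step that carries all the difficulty.

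That said, your closing observation is correct and worth separating out: in this paper the lemma is only ever invoked for $\{b_1,\dots,b_K\}\subseteq\B$ with $\B$ pairwise coprime, in which case $\gcd(b_i,b_j)=1$ forces every system of congruences to be consistent, $\mathcal{U}=\emptyset$, and both sides equal $1-\prod_{k=1}^K(1-1/b_k)$ by CRT --- the inequality is an equality and needs none of the general machinery. If you only want to justify the paper's use of the lemma, that one paragraph suffices (and is arguably cleaner than citing the general theorem); if you want to prove the lemma as stated, you must either supply the missing combinatorial argument or fall back on the reference to \cite{Hal83}.
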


\begin{proposition}\label{PonX}
  Suppose that $\B=\{b_1,b_2,\cdots\}$ satisfies (\ref{condition1}) and $b_1=2$. For $\phi=a_{00}\mathbbm{1}_{[00]}+a_{01}\mathbbm{1}_{[01]}+a_{1}\mathbbm{1}_{[1]}$, the topological pressure
  $$P(X_\eta,\phi)=a_{00}(1-2d)+d\log(2^{a_{1}+a_{01}}+2^{2a_{00}}).$$
\end{proposition}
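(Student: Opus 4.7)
The plan is to compute $P(X_\eta, \phi)$ by estimating $Z_n(X_\eta, S, \phi)$ directly, both above and below. The key structural fact is that $b_1 = 2 \in \mathcal{B}$ forbids the block ``11'' in every $W \in \mathcal{L}_n(X_\eta)$, which enables a clean combinatorial count.

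First I would reduce the problem. Since $\#_{11}(W) = 0$, one has $\#_{01}(W) = \#_1(W) - \mathbbm{1}_{W[0]=1}$ and $\#_{10}(W) = \#_1(W) - \mathbbm{1}_{W[n-1]=1}$, hence $\#_{00}(W) = n - 2\#_1(W) + O(1)$ with boundary errors at positions $0$ and $n-1$; the contribution of $\phi(S^{n-1} x)$ to the supremum is a uniform constant. This yields
$$\sup_{x \in W}\sum_{i=0}^{n-1}\phi(S^i x) = a_{00}\,n + c\,\#_1(W) + O(1), \qquad c := a_1 + a_{01} - 2a_{00},$$
uniformly over $W$ and $n$. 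Hence it suffices to show $P(X_\eta, c\mathbbm{1}_{[1]}) = d\log_2(1+2^c)$, since $P(X_\eta, \phi) = a_{00} + d\log_2(1 + 2^c)$ rearranges to the claimed formula by writing $1 + 2^c = (2^{2a_{00}} + 2^{a_1+a_{01}})/2^{2a_{00}}$.

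For the lower bound, I use the single word $W^* := \eta[0, n-1] \in \mathcal{L}_n(X_\eta)$, which satisfies $\#_1 W^* = |\mathcal{F}_\mathcal{B} \cap [0,n-1]| = nd + o(n)$ because $\eta$ is generic for $\nu_\eta$ with $\nu_\eta([1]) = d(\mathcal{F}_\mathcal{B}) = d$. Hereditarity of $X_\eta$ guarantees that every $W \le W^*$ lies in $\mathcal{L}_n(X_\eta)$, hence
$$Z_n(X_\eta, c\mathbbm{1}_{[1]}) \;\ge\; \sum_{W \le W^*} 2^{c\#_1 W} \;=\; (1+2^c)^{\#_1 W^*} \;=\; (1+2^c)^{nd+o(n)},$$
giving $P(X_\eta, c\mathbbm{1}_{[1]}) \ge d\log_2(1+2^c)$.

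For the upper bound, for each fixed $K \ge 1$ every $W \in \mathcal{L}_n(X_\eta) = \mathcal{L}_n(X_\mathcal{B})$ is $\{b_1,\ldots,b_K\}$-admissible, so $\supp W \subset F_\omega^K := \{i \in [0,n-1] : i + \omega(k) \not\equiv 0 \pmod{b_k}\ \forall k \le K\}$ for some $\omega \in \Omega_K := \prod_{k \le K}\mathbb{Z}/b_k\mathbb{Z}$, yielding
$$Z_n(X_\eta, c\mathbbm{1}_{[1]}) \;\le\; \sum_{\omega \in \Omega_K}(1+2^c)^{|F_\omega^K|}.$$
Lemma~\ref{dK} together with the pairwise coprimality of $\mathcal{B}$ (via inclusion–exclusion) gives $|F_\omega^K| \le n(1 - d_K) + O_K(1)$ uniformly in $\omega$, where $d_K := d(\mathcal{M}_{\{b_1,\ldots,b_K\}})$. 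Since $|\Omega_K|$ and $O_K(1)$ are constants in $n$, taking $\tfrac{1}{n}\log_2$ and $n \to \infty$ gives $P(X_\eta, c\mathbbm{1}_{[1]}) \le (1-d_K)\log_2(1+2^c)$ for every $K$; sending $K \to \infty$ and invoking Theorem~\ref{ddK} (so $d_K \nearrow 1-d$) completes the argument. The main difficulty lies in this step: Lemma~\ref{dK} must produce the sharp uniform bound on $|F_\omega^K|$, the overcount arising from a single $W$ being dominated by several $\omega$'s must be absorbed into the factor $|\Omega_K|$ that is independent of $n$, and the interchange of the limits $n \to \infty$ and $K \to \infty$ must be justified by monotonicity of $d_K$ in $K$.
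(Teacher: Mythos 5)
Your proposal is correct and follows essentially the same route as the paper's proof: reduce the Birkhoff sum over a word to $a_{00}n + (a_1+a_{01}-2a_{00})\#_1W + O(1)$ using that $2\in\B$ forbids the block $11$, bound $Z_n$ from above via $\{b_1,\dots,b_K\}$-admissibility, Lemma \ref{dK} and the binomial theorem, and from below via the hereditary family of words dominated by an initial segment of $\eta$. The only differences are cosmetic (you normalize to the single-symbol potential $c\mathbbm{1}_{[1]}$ and work with general $n$ up to $O_K(1)$ errors, while the paper keeps all three coefficients and passes to lengths $N_{n,K}=nb_1\cdots b_K$ for exact counts).
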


\begin{proof}
  For $n\in\N$, since $X_\eta=X_\B$,
  $$\mathcal{L}_n:=\mathcal{L}_n(X_\eta)=\{W\in\{0,1\}^n: W \text{ is $\B$-admissible}\}.$$
  For $K\in\N$, let
  $$\mathcal{L}_{n,K}=\{W\in\{0,1\}^n: W \text{ is $\{b_1,b_2,\cdots,b_K\}$-admissible}\}.$$
  So $\mathcal{L}_n\subset\mathcal{L}_{n,K}$ for any $K\ge 1$. Let $N_{n,K}:=nb_1b_2\cdots b_K$. Similar with the proof of Proposition K in \cite{Dym18}, we can obtain $W\in\mathcal{L}_{N_{n,K},K}$ by the following ways:\\
  (a)choose any $(r_1,\cdots, r_K)\in\prod_{k=1}^K\Z/b_k\Z$. Then for $j\in\bigcup_{k=1}^K(r_k+b_k\Z)$, set $W[j]=0$ when $j\in[0, N_{n,K}-1]$;\\
  (b)for $j\in[0, N_{n,K}-1]\setminus\bigcup_{k=1}^K(r_k+b_k\Z)$, complete the word $W$ by choosing arbitrarily $W[i]\in\{0,1\}$.\\
  So $\#_1W$ is ranged over $0$ to $N_{n,K}(1-d(\bigcup_{k=1}^K(r_k+b_k\Z)))$. By Lemma \ref{dK}, $N_{n,K}(1-d(\bigcup_{k=1}^K(r_k+b_k\Z)))\le N_{n,K}(1-d_K)$ where $d_K:=d(\mathcal{M}_{\{b_1,\cdots, b_K\}})$.

  Fixed any $\epsilon>0$, by Theorem \ref{ddK}, choose large enough $K$, such that $1-d-\epsilon<d_K<1-d+\epsilon$(notice that $d=d(\mathcal{F}_\B)=1-d(\mathcal{M}_\B)$). Because $2\in\B$, the word $11$ does not appear in any $x\in X_\eta$.

We claim that, for any $W\in\mathcal{L}_{N_{n,K},K}$,
$$\sup_{x\in W}\sum_{i=0}^{N_{n,K}-1}\phi(S^ix)\le a_{1}\#_1W+a_{01}\#_1W+a_{00}(N_{n,K}-2\#_1W)+\mathrm{Var}\phi([0]).$$
For $W\in\mathcal{L}_{N_{n,K},K}$, there are three cases:\\
(1)$W[N_{n,K}-1]=1$: Because $b_1=2$, $N_{n,K}$ is even, which implies that $W[0]=0$. So
$\#\{i\in[0,N_{n,K}-1]: W[i,i+1]=[01]\}=\#_1W$ and
$$
\begin{aligned}
        &\sup_{x\in W}\sum_{i=0}^{N_{n,K}-1}\phi(S^ix)\\
    =&a_{1}\#_1W+a_{01}\#_1W+a_{00}(N_{n,K}-2\#_1W)\\
    \le &a_{1}\#_1W+a_{01}\#_1W+a_{00}(N_{n,K}-2\#_1W)+\mathrm{Var}\phi([0]);
\end{aligned}
$$\\
(2)$W[0]=W[N_{n,K}-1]=0$: Because $W[0]=0$, we also have $\#\{i\in[0,N_{n,K}-1]: W[i,i+1]=[01]\}=\#_1W$. So
$$
\begin{aligned}
        &\sup_{x\in W}\sum_{i=0}^{N_{n,K}-1}\phi(S^ix)\\
    \le & a_{1}\#_1W+a_{01}\#_1W+a_{00}(N_{n,K}-1-2\#_1W)+\sup\phi([0])\\
    \le &a_{1}\#_1W+a_{01}\#_1W+a_{00}(N_{n,K}-2\#_1W)+\mathrm{Var}\phi([0]);
\end{aligned}
$$\\
(3)$W[0]=1$ and $W[N_{n,K}-1]=0$: The quantity $\#\{i\in[0,N_{n,K}-1]: W[i,i+1]=[01]\}=\#_1W-1$. So
$$
\begin{aligned}
        &\sup_{x\in W}\sum_{i=0}^{N_{n,K}-1}\phi(S^ix)\\
    \le & a_{1}\#_1W+a_{01}(\#_1W-1)+a_{00}(N_{n,K}-2\#_1W)+\sup\phi([0])\\
    \le &a_{1}\#_1W+a_{01}\#_1W+a_{00}(N_{n,K}-2\#_1W)+\mathrm{Var}\phi([0]).
\end{aligned}
$$\\
It ends the proof of the claim.

So
  $$
  \begin{aligned}
    &Z_{N_{n,K}}(X_\eta,\phi)\\
    \le &\sum_{W\in\mathcal{L}_{n,K}}2^{\sup_{x\in W}\sum_{i=0}^{N_{n,K}-1}\phi(S^ix)}\\
    \le & \sum_{W\in\mathcal{L}_{n,K}}2^{a_{1}\#_1W+a_{01}\#_1W+a_{00}(N_{n,K}-2\#_1W)+\mathrm{Var}\phi([0])}\\
    \le & \prod_{k=1}^Kb_k\cdot\sum_{i=0}^{N_{n,K}(1-d_K)}\binom{N_{n,K}(1-d_K)}{i}2^{a_{1}i+a_{01}i+a_{00}(N_{n,K}-2i)+\mathrm{Var}\phi([0])}\\
    =&\prod_{k=1}^Kb_k\cdot2^{a_{00}N_{n,K}+\mathrm{Var}\phi([0])-2a_{00}N_{n,K}(1-d_K)}(2^{a_{1}+a_{01}}+2^{2a_{00}})^{N_{n,K}(1-d_K)}\\
    =&\prod_{k=1}^Kb_k\cdot2^{a_{00}N_{n,K}(2d_K-1)+\mathrm{Var}\phi([0])}(2^{a_{1}+a_{01}}+2^{2a_{00}})^{N_{n,K}(1-d_K)}.
  \end{aligned}
  $$
  Thus,
  $$
  \begin{aligned}
    P(X_\eta,\phi)
    &\le a_{00}(2d_K-1)+(1-d_K)\log(2^{a_{1}+a_{01}}+2^{2a_{00}})\\
    &\le a_{00}(1-2d+2\epsilon)+(d+\epsilon)\log(2^{a_{1}+a_{01}}+2^{2a_{00}}),
  \end{aligned}
  $$
  which shows that $P(X_\eta,\phi)\le a_{00}(1-2d)+d\log(2^{a_{1}+a_{01}}+2^{2a_{00}})$ by the arbitrariness of $\epsilon$.

  To complete the proof , it remains to show that the inverse inequality. For any $n\in\N$, let
  $$p(n):=\#([1,n]\cap\mathcal{F}_\B).$$
  The set
  $$\{W\in\{0,1\}^n:W\le \eta[1,n]\}=\prod_{i\in\Z\cap[1,n]\setminus\mathcal{F}_\B}\{0\}\times\prod_{i\in[1,n]\cap\mathcal{F}_\B}\{0,1\}\subset\mathcal{L}_n.$$
  Thus
  $$
  \begin{aligned}
    Z_n(X_\eta,\phi)
    &\ge \sum_{i=0}^{p(n)}\binom{p(n)}{i}2^{a_{1}i+a_{01}i+a_{00}(n-2i)-2|\phi|}\\
    &=2^{a_{00}n-2a_{00}p(n)-2|\phi|}(2^{a_{1}+a_{01}}+2^{2a_{00}})^{p(n)},
  \end{aligned}
  $$
  where $|\phi|=\sup_{x\in X_\eta}|\phi(x)|$.
  So
  $$
  \begin{aligned}
    P(X_\eta,\phi)
    &\ge \lim_{n\to\infty}a_{00}\left(1-\frac{2p(n)}{n}\right)-\frac{2|\phi|}{n}+\frac{p(n)}{n}\log(2^{a_{1}+a_{01}}+2^{2a_{00}})\\
    &=a_{00}(1-2d)+d\log(2^{a_{1}+a_{01}}+2^{2a_{00}}),
  \end{aligned}
  $$
  which ends the proof.
\end{proof}

\begin{remark}\label{r:5.7}
	For $n\ge 1$, let $\mathscr{C}_n=\{\sum_{W\in\mathcal{L}_n(X_\eta)}a_W\mathbbm{1}_W: a_W\in\R,W\in\mathcal{L}_n(X_\eta)\}$. We also consider the function $\phi\in\mathscr{C}_n\setminus\mathscr{C}_2$ for $n\ge 3$.
	But in the calculation of topological pressure on $X_\eta$, it is not easy to estimate the frequency of the $n$-length word $A$ with $\#_1A\ge 2$ appearing in $W\in\mathcal{L}(X_\eta)$.
	This difficulty arises for $X_\eta$ with $2\in\B$.
	Also, for $\phi\in\mathscr{C}_2$ and $X_\eta$ with $2\notin\B$, this difficulty will arise because the word $11$ will appear in some $W\in\mathcal{L}(X_\eta)$.
	So in such cases, it is not easy to calculate or estimate the topological pressure for $\phi$.
\end{remark}

The measure entropy $h_{\nu_\eta*B_{q,1-q}}(X_\eta,S)$ is given in \cite{Prz15}.
\begin{proposition}[Proposition 2.1.9 \cite{Prz15}]
  If $\kappa=B_{p,1-p}$, then
  $$h_{\nu_\eta*\kappa}(X_\eta,S)=(-p\log p-(1-p)\log(1-p))\prod_{i=1}^\infty\left(1-\frac{1}{b_i}\right).$$
\end{proposition}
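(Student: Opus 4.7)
The plan is to couple $\kappa := \nu_\eta * B_{p,1-p}$ with a zero-entropy hidden factor and exploit the entropy chain rule. Put $\mu = \nu_\eta \otimes B_{p,1-p}$ on $X_\eta \times \{0,1\}^\Z$ (invariant under $S \times \sigma$) and consider the equivariant map $\Psi: X_\eta \times \{0,1\}^\Z \to X_\eta \times X_\eta$ defined by $\Psi(x,y) = (x,\,xy)$. Its image measure $\Psi_*\mu$ is a joint distribution of two coordinates which I denote $(x,z)$; the first marginal is $\nu_\eta$ and the second is $\kappa$.

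First, I would compute the joint entropy via the chain rule on the $x$-factor:
\[
h_{\Psi_*\mu}(S \times S) = h_{\nu_\eta}(S) + h_{\Psi_*\mu}(S \times S \mid \pi_1),
\]
where $\pi_1$ is projection to the first coordinate. Since $\nu_\eta$ is a measure-theoretic factor of the equicontinuous odometer $(\Omega, T, \mathbb{P})$, $h_{\nu_\eta}(S) = 0$. For the conditional term, note that conditionally on $x$ the coordinates $z_n = x_n y_n$ are independent across $n$, being $0$ deterministically where $x_n = 0$ and $B_{p,1-p}$-distributed where $x_n = 1$. The Shannon--McMillan argument combined with the ergodic theorem (using $\nu_\eta([1]) = d = \prod_i(1-1/b_i)$) then gives fiber entropy rate $d\,H(p)$, where $H(p) := -p\log p - (1-p)\log(1-p)$. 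Hence $h_{\Psi_*\mu}(S \times S) = d\,H(p)$.

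Second, and this is the crux, I would show $h_{\Psi_*\mu}(S\times S \mid \pi_2) = 0$ where $\pi_2$ is projection to $z$; equivalently, $x$ is $\sigma(\pi_2)$-measurable modulo null sets. Writing $x = \varphi(\omega)$ with $\omega = (\omega(i))_{i\geq 1} \in \Omega$, it suffices to recover each $\omega(i)$ from $z$. By definition of $\varphi$, $z_n = 1$ forces $n \not\equiv -\omega(i)\pmod{b_i}$ for every $i$, so the residue class $-\omega(i)\bmod b_i$ is disjoint from $\{n : z_n = 1\}$. Conversely, for each $r \neq -\omega(i)\bmod b_i$, the set $\{n\equiv r\pmod{b_i}: z_n=1\}$ has positive asymptotic density $(1-p)\prod_{j\neq i}(1-1/b_j) > 0$, by joint ergodicity of $\mu$ together with the independence of the odometer coordinates under $\mathbb{P}$. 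Hence $-\omega(i)\bmod b_i$ is the \emph{unique} residue class avoided by $\{n:z_n = 1\}$, which pins down $\omega(i)$ and therefore $x$.

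Combining the two steps with $h_{\Psi_*\mu}(S \times S) = h_\kappa(S) + h_{\Psi_*\mu}(S\times S \mid \pi_2)$ yields
\[
h_\kappa(X_\eta, S) = d\, H(p) = \Bigl(-p\log p - (1-p)\log(1-p)\Bigr)\prod_{i=1}^\infty\Bigl(1-\tfrac{1}{b_i}\Bigr),
\]
as asserted. The hardest part will be the equidistribution claim in the second step: one must show that for $\kappa$-a.e.\ $z$, every non-forbidden residue class modulo each $b_i$ contains a positive density of indices $n$ with $z_n = 1$. This relies on the pairwise coprimality and thinness assumptions in (\ref{condition1}), which permit one to decouple the constraints coming from distinct $b_j$'s and to guarantee that restricting the $\B$-free pattern to an arithmetic progression still yields a hereditary subshift of positive density.
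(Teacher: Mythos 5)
The paper does not prove this proposition at all --- it is imported verbatim as Proposition 2.1.9 of \cite{Prz15} --- so there is no internal proof to compare against; judged on its own, your argument is essentially the standard one from \cite{Prz15} and is correct in outline. The two chain-rule steps are sound: $h_{\nu_\eta}(S)=0$ because $\nu_\eta$ is a factor of the zero-entropy odometer, and conditionally on $x$ the coordinates of $z=xy$ are independent with per-site entropy $H(p)\mathbbm{1}_{[1]}(S^nx)$, so the relative entropy over the first coordinate is $\nu_\eta([1])H(p)=dH(p)$ by the ergodic theorem. The only place where real work remains is your ``crux'': showing that for a.e.\ $(x,y)$ every residue class $r\not\equiv-\omega(i)\pmod{b_i}$ meets $\{n:z_n=1\}$. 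You do not need positive density for this --- it suffices that $\{n\equiv r\ (\mathrm{mod}\ b_i):x_n=1\}$ is infinite (which follows from the density computation $\frac{1}{b_i}\prod_{j\neq i}(1-\frac{1}{b_j})>0$ for the set $\mathcal{F}_{\B\setminus\{b_i\}}$ restricted to a progression, via Davenport--Erd\H{o}s) together with Borel--Cantelli for the independent $y_n$'s, provided $p<1$; the degenerate cases $p\in\{0,1\}$ should be disposed of separately since both sides vanish. Note that this recovery step is exactly the content of the map $\theta:X_1\To\Omega$ and Lemma 2.5 of \cite{Prz15} that the present paper invokes later in Section 5 (the statement that $|\mathrm{supp}(z)\bmod b_i|=b_i-1$ for $\kappa$-a.e.\ $z$, i.e.\ $\kappa(X_1)=1$), so your proof is consistent with, and essentially reconstructs, the machinery the source relies on.
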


The next proposition shows that for some $p$, $\nu_\eta*B_{p,1-p}$ is an equilibrium state for $\phi=a_{00}\mathbbm{1}_{[00]}+a_{01}\mathbbm{1}_{[01]}+a_{1}\mathbbm{1}_{[1]}$.

\begin{proposition}\label{p:EqState}
  Suppose that $\B=\{b_1,b_2,\cdots\}$ satisfies (\ref{condition1}) and $b_1=2$. For $\phi=a_{00}\mathbbm{1}_{[00]}+a_{01}\mathbbm{1}_{[01]}+a_{1}\mathbbm{1}_{[1]}$, $\nu_\eta*B_{p,1-p}$ is an equilibrium state for $\phi$ where
  $$p=\frac{2^{2a_{00}}}{2^{a_1+a_{01}}+2^{2a_{00}}}.$$
\end{proposition}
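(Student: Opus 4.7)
The plan is to verify directly the variational equality
$$h_\kappa(X_\eta,S) + \int \phi\,d\kappa = P(X_\eta,\phi)$$
for $\kappa := \nu_\eta * B_{p,1-p}$. The right-hand side is already computed in Proposition \ref{PonX}, and the entropy $h_\kappa = d\cdot H(p)$ is supplied by Proposition 2.1.9 of \cite{Prz15}, where $H(p) := -p\log p - (1-p)\log(1-p)$ and $d = \prod_i(1-1/b_i)$. Hence everything reduces to computing $\int \phi\,d\kappa$ and checking an elementary algebraic identity; no new optimization or variational argument is needed.

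To compute the integral, I would first pin down $\nu_\eta$ on the short cylinders appearing in $\phi$. The key observation is that $b_1 = 2 \in \B$ forces $[11] \notin \mathcal{L}(X_\eta)$, hence $\nu_\eta([11]) = 0$; combined with $\nu_\eta([1]) = d$ and $S$-invariance this yields $\nu_\eta([01]) = \nu_\eta([10]) = d$ and $\nu_\eta([00]) = 1-2d$. Next, applying Lemma \ref{JMlem1.1}, the only non-zero contributions come from $C' \ge C$ avoiding $[11]$, which gives $\kappa([1]) = \kappa([01]) = d(1-p)$ and $\kappa([00]) = 1 - 2d(1-p)$. A short rearrangement then produces
$$\int \phi\,d\kappa = a_{00} + d(1-p)\bigl(a_1+a_{01}-2a_{00}\bigr).$$

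Finally, substituting these expressions together with the entropy formula into the variational equality and dividing by $d$ reduces everything to the scalar identity
$$H(p) + (1-p)(a_1+a_{01}) + 2a_{00}\,p = \log\!\bigl(2^{a_1+a_{01}} + 2^{2a_{00}}\bigr).$$
Setting $A := 2^{a_1+a_{01}} + 2^{2a_{00}}$, one has $\log p = 2a_{00} - \log A$ and $\log(1-p) = (a_1+a_{01}) - \log A$ for the prescribed $p$, so expanding $H(p) = -p\log p - (1-p)\log(1-p)$ yields the identity in one line. There is no genuine obstacle; the chosen $p$ is simply the unconstrained critical point of $q \mapsto dH(q) + d(1-q)(a_1+a_{01}-2a_{00})$, i.e., the solution of $\log\tfrac{1-q}{q} = a_1 + a_{01} - 2a_{00}$, which also makes it transparent why among the one-parameter family $\{\nu_\eta * B_{q,1-q}\}_{q \in (0,1)}$ this value automatically saturates the pressure.
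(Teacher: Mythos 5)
Your proposal is correct and follows essentially the same route as the paper: compute $\nu_\eta$ on the $2$-cylinders from $[11]\notin\mathcal{L}(X_\eta)$, push these through Lemma \ref{JMlem1.1} to get $\kappa$ on $[00],[01],[1]$, and then combine the entropy formula of \cite{Prz15} with Proposition \ref{PonX} so that everything reduces to the identity $H(p)+(1-p)(a_1+a_{01})+2a_{00}p=\log\bigl(2^{a_1+a_{01}}+2^{2a_{00}}\bigr)$, which holds for the prescribed $p$. The only cosmetic differences are your simplified form of $\int\phi\,d\kappa$ and the (correct, and clarifying) observation that $p$ is the critical point of $q\mapsto dH(q)+d(1-q)(a_1+a_{01}-2a_{00})$; just note that dividing by $d$ uses $d>0$, which the paper records elsewhere.
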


\begin{proof}
  Since $2\in\B$, the word $11$ does not appear in $\eta$, which implies that $[11]\cap X_\eta=\emptyset$. For $[00]$, $[01]$ and $[10]$, their measures are given as follows:
  \begin{equation}\label{nu00}
  \begin{aligned}
    \nu_\eta([00])=&\lim_{n\to\infty}\frac{\#\{i\in[0,n-1]: S^i\eta\in[0]\text{ and }S^{i+1}\eta\in[0]\}}{n}\\
    =&\lim_{n\to\infty}\frac{n-2\#([0,n-1]\cap\mathcal{F}_\B)}{n}\\
    =&1-2d;
  \end{aligned}
  \end{equation}
  \begin{equation}\label{nu01}
  \begin{aligned}
    \nu_\eta([01])=&\lim_{n\to\infty}\frac{\#\{i\in[0,n-1]: S^{i+1}\eta\in[1]\}}{n}\\
    =&\lim_{n\to\infty}\frac{\#([1,n]\cap\mathcal{F}_\B)}{n}\\
    =&d;
  \end{aligned}
  \end{equation}
  \begin{equation}\label{nu10}
  \begin{aligned}
    \nu_\eta([10])=&\lim_{n\to\infty}\frac{\#\{i\in[0,n-1]: S^i\eta\in[1]\}}{n}\\
    =&\lim_{n\to\infty}\frac{\#([0,n-1]\cap\mathcal{F}_\B)}{n}\\
    =&d.
  \end{aligned}
  \end{equation}
  Those three equations follow from the fact that $11$ does not appear in $\eta$. Thus, by the Lemma \ref{JMlem1.1} and equations (\ref{nu00}), (\ref{nu01}) and (\ref{nu10}),
  \begin{equation}\label{kappa}
  \begin{aligned}
    \nu_\eta*B_{p,1-p}([00])=&\nu_\eta([00])+\nu_\eta([01])p+\nu_\eta([10])p\\
    =&1-2d+2dp,\\
    \nu_\eta*B_{p,1-p}([01])=&\nu_\eta([01])(1-p)=d(1-p),\\
    \nu_\eta*B_{p,1-p}([10])=&\nu_\eta([10])(1-p)=d(1-p).\\
  \end{aligned}
  \end{equation}

  So
  $$\int\phi d\nu_\eta*B_{p,1-p}=a_{00}(1-2d)+d(2a_{00}p+(a_{01}+a_{1})(1-p)).$$

  Notice that
  $$\log(2^{a_{1}+a_{01}}+2^{2a_{00}})=(-p\log p-(1-p)\log(1-p))+2a_{00}p+(a_{01}+a_{1})(1-p)$$
  if and only if
  $$p=\frac{2^{2a_{00}}}{2^{a_1+a_{01}}+2^{2a_{00}}}.$$
  It is showed before that $d=\prod_{i=1}^\infty(1-1/b_i)$. So when $$p=\frac{2^{2a_{00}}}{2^{a_1+a_{01}}+2^{2a_{00}}},$$
  we have
  $$P(X_\eta,\phi)=h_{\nu_\eta*B_{p,1-p}}(X_\eta,S)+\int\phi d\nu_\eta*B_{p,1-p},$$
  which implies that $\nu_\eta*B_{p,1-p}$ is the equilibrium state for $\phi$.
\end{proof}

Next, we will prove the uniqueness of equilibrium state. In \cite{Pec15} and \cite{Prz15}, they prove intrinsic ergodicity of the squarefree flow and $\B$-free system. We mainly use their methods to prove the uniqueness of the equilibrium state for $\phi=a_{00}\mathbbm{1}_{[00]}+a_{01}\mathbbm{1}_{[01]}+a_{1}\mathbbm{1}_{[1]}$.

Let $I=(i_1,i_2,\dots)$ where $i_k\in\{1,2,\dots,b_k-1\}$ for each $k\ge 1$. Define
$$X_I=\{x\in X_\eta:\text{ for any }k\ge 1,\,|\supp{x}\text{ mod }b_k|=b_k-i_k\},$$
and for any $K\ge 1$,
$$d_{I,K}=\prod_{k=1}^K\left(1-\frac{i_k}{b_k}\right),\quad d_I=\prod_{k=1}^\infty\left(1-\frac{i_k}{b_k}\right).$$
For convenience, we set $X_1:=X_{(1,1,\dots)}$. Notice that $X_1$ is Borel and $SX_1=X_1$.

\begin{lemma}
	Suppose that $\B=\{b_1,b_2,\cdots\}$ satisfies (\ref{condition1}) and $b_1=2$. Let $I=(i_1,i_2,\dots)$ where $i_k\in\{1,2,\dots,b_k\}$ for each $k\ge 1$. For $\phi=a_{00}\mathbbm{1}_{[00]}+a_{01}\mathbbm{1}_{[01]}+a_{1}\mathbbm{1}_{[1]}$, we have
	$$P(\overline{X}_I,S)\le a_{00}(1-2d_I)+d_I\log(2^{2a_{00}}+2^{a_{01}+a_1}),$$
	where $\overline{X}_I$ is the closure of $X_I$.
\end{lemma}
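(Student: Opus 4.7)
The plan is to mimic the upper-bound half of the proof of Proposition \ref{PonX}, replacing the $\B$-admissibility constraint with the stronger forbidden-residue condition defining $X_I$: for each $b_k$, a fixed set of $i_k$ residue classes modulo $b_k$ must be absent from the support of $x$.

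First I fix $K\ge 1$ and let $N = N_{n,K} = nb_1\cdots b_K$. Mirroring $\mathcal{L}_{n,K}$ from the proof of Proposition \ref{PonX}, I introduce the auxiliary set
$$\mathcal{L}_{N,K,I}=\bigl\{W\in\{0,1\}^N : \exists F_k\subset\Z/b_k\Z,\ |F_k|=i_k,\ k\le K,\ \text{s.t.\ } W[j]=0 \text{ whenever } j\bmod b_k\in F_k\bigr\}.$$
Because the cylinder $[W]$ is clopen, any $W\in\mathcal{L}_N(\overline{X}_I)$ already appears as a window of some $x\in X_I$, whose set of missing residues mod $b_k$ supplies the required $F_k$, so $\mathcal{L}_N(\overline{X}_I)\subset\mathcal{L}_{N,K,I}$. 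The number of $(F_1,\dots,F_K)$ is the $n$-independent constant $\prod_{k=1}^K\binom{b_k}{i_k}$, and by the pairwise coprimality assumed in (\ref{condition1}) together with the Chinese remainder theorem, each fixed choice leaves exactly $Nd_{I,K}$ free positions in $[0,N-1]$, the remaining positions being forced to $0$.

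Since $b_1=2$ still forbids the pattern $11$ inside any $W\in\mathcal{L}_N(X_\eta)\supset\mathcal{L}_N(\overline{X}_I)$, the pointwise bound established inside the proof of Proposition \ref{PonX} applies verbatim:
$$\sup_{x\in W}\sum_{i=0}^{N-1}\phi(S^ix)\le (a_1+a_{01})\#_1W+a_{00}(N-2\#_1W)+\mathrm{Var}\phi([0]).$$
Summing $2^{(\cdot)}$ over $W\in\mathcal{L}_{N,K,I}$ reduces to a binomial sum over the $Nd_{I,K}$ free positions, which collapses to
$$Z_N(\overline{X}_I,\phi)\le C_K\cdot 2^{a_{00}N(1-2d_{I,K})}\bigl(2^{2a_{00}}+2^{a_{01}+a_1}\bigr)^{Nd_{I,K}}$$
with $C_K$ a constant independent of $n$. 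Dividing by $N$, taking logs and letting $n\to\infty$ yields the desired bound with $d_{I,K}$ in place of $d_I$; sending $K\to\infty$ and using $d_{I,K}\searrow d_I$ completes the proof.

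I do not anticipate a serious obstacle: the only genuinely new ingredient beyond Proposition \ref{PonX} is the inclusion $\mathcal{L}_N(\overline{X}_I)\subset\mathcal{L}_{N,K,I}$, which is immediate from the clopenness of finite cylinders. The technical gain is that the forbidden-residue density $1-d_{I,K}$ is realized for a single fixed choice of residues mod each $b_k$, automatically producing $d_{I,K}$ (rather than the weaker $d_K$ appearing in Proposition \ref{PonX}) as the exponent governing the number of free positions.
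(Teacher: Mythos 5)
Your proposal is correct and follows essentially the same route as the paper's proof: relax to the finitely many moduli $b_1,\dots,b_K$, enumerate words by the choice of $i_k$ forbidden residues mod each $b_k$ (giving the constant $\prod_{k\le K}\binom{b_k}{i_k}$ and exactly $N_{n,K}d_{I,K}$ free positions by the Chinese remainder theorem), apply the pointwise bound from Proposition \ref{PonX}, collapse the binomial sum, and let $n\to\infty$ then $K\to\infty$. The only cosmetic difference is that you phrase the auxiliary language via the existence of the forbidden residue sets $F_k$ rather than the equivalent condition $|\mathrm{supp}\,W \bmod b_k|\le b_k-i_k$.
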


\begin{proof}
	For $n\in\N$, notice that
	$$\mathcal{L}_n(\overline{X}_I)=\{W\in\{0,1\}^n: |\supp{W}\text{ mod }b_k|\le b_k-i_k\text{ for each }k\ge 1\}.$$
	For $K\in\N$, let
	$$\mathcal{L}_{n,I,K}=\{W\in\{0,1\}^n: |\supp{W}\text{ mod }b_k|\le b_k-i_k\text{ for each }k=1,2,\dots,K\}.$$
	So $\mathcal{L}_n(\overline{X}_I)\subset\mathcal{L}_{n,I,K}$ for any $K\ge 1$. Let $N_{n,K}:=nb_1b_2\cdots b_K$.
	
	We can obtain $W\in\mathcal{L}_{N_{n,K},I,K}$ by the following ways:\\
	(a)choose any $(Z_1,\dots, Z_K)$ with $Z_k\subset \Z/b_k\Z$ and $\#Z_k=i_k$ for each $k=1,\dots,K$. Then let
	$$Z(Z_1,\dots,Z_K)=\{0\le j<N_{n,K}:\text{for any }k=1,2,\dots,K,\,j\text{ mod }b_k\in Z_k\},$$
	and for $j\in Z(Z_1,\dots,Z_K)$, set $W[j]=0$;\\
	(b)for $j\in[0, N_{n,K}-1]\setminus Z(Z_1,\dots,Z_K)$, complete the word $W$ by choosing arbitrarily $W[j]\in\{0,1\}$.\\
	Since $\#Z(Z_1,\dots,Z_K)=N_{n,K}(1-d_{I,K})$, we have $\#_1W$ is ranged over $0$ to $N_{n,K}d_{I,K}$. And there are at most $\binom{b_1}{i_1}\cdots\binom{b_K}{i_K}$ choices of $(Z_1,\dots, Z_K)$ in (a).
	Similar with the claim in the proof of Proposition \ref{PonX}, we have
	$$\sup_{x\in W}\sum_{i=0}^{N_{n,K}-1}\phi(S^ix)\le a_{1}\#_1W+a_{01}\#_1W+a_{00}(N_{n,K}-2\#_1W)+\mathrm{Var}\phi([0])$$
	for each $W\in\mathcal{L}_{N_{n,K},I,K}$.
	So
	$$
	\begin{aligned}
	&Z_{N_{n,K}}(\overline{X}_I,\phi)\\
	\le &\sum_{W\in\mathcal{L}_{N_{n,K},I,K}}2^{\sup_{x\in W}\sum_{i=0}^{N_{n,K}-1}\phi(S^ix)}\\
	\le & \sum_{W\in\mathcal{L}_{N_{n,K},I,K}}2^{a_{1}\#_1W+a_{01}\#_1W+a_{00}(N_{n,K}-2\#_1W)+\mathrm{Var}\phi([0])}\\
	\le & \prod_{k=1}^K\binom{b_k}{i_k}\cdot\sum_{i=0}^{N_{n,K}d_{I,K}}\binom{N_{n,K}d_{I,K}}{i}2^{a_{1}i+a_{01}i+a_{00}(N_{n,K}-2i)+\mathrm{Var}\phi([0])}\\
	=&\prod_{k=1}^K\binom{b_k}{i_k}\cdot2^{a_{00}N_{n,K}+\mathrm{Var}\phi([0])-2a_{00}N_{n,K}d_{I,K}}(2^{a_{1}+a_{01}}+2^{2a_{00}})^{N_{n,K}d_{I,K}}\\
	=&\prod_{k=1}^K\binom{b_k}{i_k}\cdot2^{a_{00}N_{n,K}(1-2d_{I,K})+\mathrm{Var}\phi([0])}(2^{a_{1}+a_{01}}+2^{2a_{00}})^{N_{n,K}d_{I,K}}.
	\end{aligned}
	$$
	Thus
	$$P(\overline{X}_I,\phi)\le a_{00}(1-2d_{I,K})+d_{I,K}\log(2^{a_{1}+a_{01}}+2^{2a_{00}}),$$
	and let $K\To\infty$, which ends the proof.
\end{proof}
The next lemma shows that we can use the methods in the proof of intrinsic ergodicity in \cite{Pec15} and \cite{Prz15}.
\begin{lemma}\label{l:X1}
	Suppose that $\B=\{b_1,b_2,\cdots\}$ satisfies (\ref{condition1}) and $b_1=2$. Let $\mu\in\mathcal{M}^e(X_\eta,S)$ be an equilibrium state for $\phi=a_{00}\mathbbm{1}_{[00]}+a_{01}\mathbbm{1}_{[01]}+a_{1}\mathbbm{1}_{[1]}$. Then $\mu(X_1)=1$.
\end{lemma}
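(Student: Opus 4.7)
The plan is to exploit the ergodic decomposition of $X_\eta$ along the natural invariants $|\supp{x}\text{ mod }b_k|$, to concentrate $\mu$ on a single piece $\overline{X}_{I(\mu)}$, and then to compare the pressure bound on this piece against $P(X_\eta,\phi)$ from Proposition~\ref{PonX}; strict monotonicity in the density parameter will force $I(\mu)=(1,1,\dots)$.

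First I would note that for each $k\ge 1$ and each $j\in\{1,\dots,b_k\}$, the set
\[A_{k,j}=\{x\in X_\eta:|\supp{x}\text{ mod }b_k|=b_k-j\}\]
is Borel and $S$-invariant, since shifting $x$ only shifts its support and preserves the number of occupied residues mod $b_k$. By $\B$-admissibility the family $\{A_{k,j}\}_{j=1}^{b_k}$ partitions $X_\eta$, so ergodicity of $\mu$ selects a unique $j_k^\ast$ with $\mu(A_{k,j_k^\ast})=1$. Setting $I(\mu):=(j_1^\ast,j_2^\ast,\dots)$, a countable intersection of conull sets gives $\mu(X_{I(\mu)})=1$, hence $\mu$ is supported on $\overline{X}_{I(\mu)}$.

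Next I would combine the variational principle on $\overline{X}_{I(\mu)}$ with the preceding lemma to obtain
\begin{equation*}
P(X_\eta,\phi)=h_\mu(X_\eta,S)+\int\phi\,d\mu\le P(\overline{X}_{I(\mu)},\phi)\le a_{00}(1-2d_{I(\mu)})+d_{I(\mu)}\log(2^{2a_{00}}+2^{a_{01}+a_1}).
\end{equation*}
Proposition~\ref{PonX} gives exactly $P(X_\eta,\phi)=a_{00}(1-2d)+d\log(2^{2a_{00}}+2^{a_{01}+a_1})$, and since $2^{a_{01}+a_1}>0$ we have $\log(2^{2a_{00}}+2^{a_{01}+a_1})>2a_{00}$, so the map $x\mapsto a_{00}(1-2x)+x\log(2^{2a_{00}}+2^{a_{01}+a_1})$ is strictly increasing. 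This forces $d_{I(\mu)}\ge d$. But $d_{I(\mu)}=\prod_k(1-i_k^\ast/b_k)\le\prod_k(1-1/b_k)=d$, with strict inequality as soon as some $i_k^\ast\ge 2$ (using $(b_k-2)/(b_k-1)<1$ when $b_k\ge 3$, the vanishing factor when $b_k=2$ and $i_k^\ast=2$, and $d>0$ guaranteed by $\sum 1/b_i<\infty$). Hence $I(\mu)=(1,1,\dots)$ and $\mu(X_1)=1$.

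The main obstacle I expect is the first step: ensuring that ergodicity selects a well-defined value of $|\supp{x}\text{ mod }b_k|$ for each $k$ simultaneously, which requires checking the $S$-invariance, Borel measurability, and partition property of the $A_{k,j}$ and then passing to a countable intersection over $k$. Once this ergodic pigeonhole is in place, the remainder is a routine monotone comparison of the two explicit pressure expressions.
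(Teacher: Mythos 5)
Your proposal is correct and follows essentially the same route as the paper: concentrate $\mu$ on a single $X_{I}$ and then compare the pressure bound on $\overline{X}_{I}$ with the exact value of $P(X_\eta,\phi)$ from Proposition~\ref{PonX} to force $d_I\ge d$, hence $I=(1,1,\dots)$ since $d>0$. The only difference is cosmetic: the paper outsources the ergodic pigeonhole step (uniqueness of $I$ with $\mu(X_I)=1$) to the method of Peckner's Lemma~3.3, whereas you carry it out explicitly via the invariant Borel partition $\{A_{k,j}\}$, which makes the argument more self-contained.
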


\begin{proof}
	First, there is unique $I=(i_1,i_2,\dots)\in\prod_{k\ge 1}\{1,2,\dots,b_k-1\}$ such that $\mu(X_I)=1$, which can be proved by a similar method in \cite[Lemma 3.3]{Pec15} although it is the case of $\mathscr{B}=\{p^2:p\text{ is prime number}\}$.
	Thus
	$$
	\begin{aligned}
	P(X_\eta,\phi)=&h_\mu(X_\eta,S)+\int_{X_\eta}\phi d\mu\\
	=&P(\overline{X}_I,\phi)\le a_{00}(1-2d_I)+d_I\log(2^{a_{1}+a_{01}}+2^{2a_{00}}).
	\end{aligned}
	$$
	Since $P(X_\eta,\phi)=a_{00}(1-2d)+d\log(2^{a_{1}+a_{01}}+2^{2a_{00}})$, we have $d_I\ge d$. When $\B$ satisfies the condition (\ref{condition1}), it follows by \cite{Abd13} that $d>0$, which implies that $i_k=1$ for each $k\ge 1$ and $X_I=X_1$.
\end{proof}

By this lemma, we can use the methods in \cite{Abd13} and \cite{Prz15} to prove the uniqueness of equilibrium state. Given $k\ge 1$ and $z\in\Z/b_k\Z$, set
$$
\begin{aligned}
\Omega_{k,z}=&\{\omega\in\Omega:\omega(k)=z\},\\
E_{k,z}=&\{\omega\in\Omega:\text{for any }s\ge 1,\,\varphi(\omega)(-z+sb_k)=0\}
\end{aligned}
$$
and
$$\Omega'_0=\bigcap_{k\ge 1}\bigcap_{z\in\Z/b_k\Z}\left(E^c_{k,z}\cup\Omega_{k,z}\right),\,
\Omega_0=\bigcap_{k\in\Z}T^k\Omega'_0.$$

\begin{lemma}[Proposition 3.2 in \cite{Abd13}]
	We have $\mathbb{P}(\Omega_0)=1$ and $\varphi|_{\Omega_0}$ is 1-1.
\end{lemma}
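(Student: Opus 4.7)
The plan is to handle the two claims separately. For $\mathbb{P}(\Omega_0)=1$, I would first use $T$-invariance of $\mathbb{P}$ and countable subadditivity to reduce the statement to $\mathbb{P}(\Omega'_0)=1$, and then further to $\mathbb{P}(E_{k,z}\setminus\Omega_{k,z})=0$ for each fixed $k\ge 1$ and $z\in\mathbb{Z}/b_k\mathbb{Z}$. On this event one has $\omega(k)\neq z$, so the $k$-th coordinate can never produce the vanishing $\omega(k)+(-z+sb_k)\equiv 0\bmod b_k$; instead, every $s\ge 1$ must be ``covered'' by some $i\neq k$ in the sense that $\omega(i)\equiv z-sb_k\bmod b_i$. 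Conditioning on $\omega(k)$ then turns this into the question of whether the progression $\{-z+sb_k:s\ge 1\}$ is entirely contained in $\bigcup_{i\neq k}(b_i\mathbb{Z}-\omega(i))$, which is a statement purely about $\B':=\B\setminus\{b_k\}$.

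The hard part is the measure estimate here: the events $\{\varphi(\omega)(-z+sb_k)=0\}_{s\ge 1}$ are strongly correlated, because once $\omega(i)$ matches the required residue for one $s$ it does so for the entire arithmetic progression $s+b_i\mathbb{Z}$. I would resolve this by invoking Theorem~\ref{ddK} for the thin, pairwise coprime family $\B'$: the lower density of $\mathcal{F}_{\B'}$ is strictly positive, and since $b_k$ is coprime to every $b_i$ with $i\neq k$, the restriction of $\mathcal{F}_{\B'}$ to the arithmetic progression $-z+b_k\mathbb{Z}_{>0}$ retains positive relative density. This forces $\mathbb{P}$-almost every $\omega$ with $\omega(k)\neq z$ to leave some $s\ge 1$ uncovered, contradicting membership in $E_{k,z}$; combining with countable subadditivity and $T$-invariance yields $\mathbb{P}(\Omega_0)=1$.

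The injectivity claim, by contrast, is a short combinatorial argument. Given $\omega,\omega'\in\Omega_0$ with $\varphi(\omega)=\varphi(\omega')$ but $\omega(k)=z\neq z'=\omega'(k)$ for some $k$, I would observe that $\omega(k)+(-z+sb_k)=sb_k\equiv 0\bmod b_k$, so $\varphi(\omega)(-z+sb_k)=0$ for every $s\ge 1$, and hence the same vanishing holds for $\varphi(\omega')$. This places $\omega'\in E_{k,z}$, while $\omega'(k)=z'\neq z$ gives $\omega'\notin\Omega_{k,z}$, contradicting $\omega'\in\Omega_0\subset\Omega'_0\subset E^c_{k,z}\cup\Omega_{k,z}$. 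Note that this final step only uses $\omega'\in\Omega'_0$; the stronger requirement defining $\Omega_0$ as $\bigcap_{k\in\mathbb{Z}}T^k\Omega'_0$ is there to supply a genuinely $T$-invariant full-measure set, which matters for subsequent applications where $\varphi|_{\Omega_0}$ is pushed forward to identify the Mirsky measure $\nu_\eta$ with $\varphi_*\mathbb{P}$.
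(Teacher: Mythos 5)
The paper offers no proof of this lemma at all --- it is quoted verbatim as Proposition 3.2 of \cite{Abd13} --- so your attempt has to be judged on its own merits. The injectivity half is correct and complete: if $\varphi(\omega)=\varphi(\omega')$ and $\omega(k)=z\neq\omega'(k)$, then $\omega(k)+(-z+sb_k)\equiv 0\bmod b_k$ forces $\varphi(\omega)(-z+sb_k)=0$ for every $s\ge1$, hence $\omega'\in E_{k,z}\setminus\Omega_{k,z}$, contradicting $\omega'\in\Omega'_0$; your remark that only $\Omega'_0$ is needed here, the two-sided intersection defining $\Omega_0$ serving to make the set $T$-invariant, is also accurate. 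Likewise the reduction of $\mathbb{P}(\Omega_0)=1$ to $\mathbb{P}(E_{k,z}\setminus\Omega_{k,z})=0$ for each fixed $k,z$, via $T$-invariance and countable subadditivity, is fine.

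The gap sits exactly where you say the hard part is, and your proposed resolution does not close it. You must show that for $\mathbb{P}$-a.e.\ $\omega$ with $\omega(k)\neq z$ the progression $\{-z+sb_k:s\ge1\}$ is not contained in $\bigcup_{i\neq k}\bigl(b_i\Z-\omega(i)\bigr)$. Theorem \ref{ddK} and the positivity of $\underline{d}(\mathcal{F}_{\B\setminus\{b_k\}})$ concern the \emph{unshifted} set of multiples $\bigcup_{i\neq k}b_i\Z$; what you need to control is the complement of a union of \emph{randomly translated} progressions, and no density statement about a single fixed set can handle all translates simultaneously. Indeed the conclusion genuinely fails for exceptional $\omega$: enumerating $\Z=\{n_1,n_2,\dots\}$ and choosing $\omega(i)\equiv -n_i\bmod b_i$ makes $\bigcup_{i}(b_i\Z-\omega(i))=\Z$, so the statement is only an almost-everywhere one and must be proved probabilistically. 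The missing ingredient is a quantitative use of the independence and uniformity of the coordinates $\omega(i)$: for instance, truncate at level $K$, observe by the Chinese Remainder Theorem (using that $b_k$ is coprime to each $b_i$, $i\neq k$) that at least $N\prod_{i\le K,\,i\neq k}(1-1/b_i)-O_K(1)$ of the indices $s\in[1,N]$ survive the congruence conditions from $i\le K$, bound the expected number of $s\in[1,N]$ eliminated by some $i>K$ by $N\sum_{i>K}1/b_i$, which is less than half of $N\prod_i(1-1/b_i)$ for $K$ large because $\B$ is thin, and then upgrade the resulting positive-probability statement to an almost-sure one (e.g.\ by a second-moment bound, or by applying the pointwise ergodic theorem to $T^{b_k}$, whose invariant $\sigma$-algebra is generated by the coordinate $\omega(k)$ thanks to pairwise coprimality). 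Without some step of this kind, the assertion that almost every $\omega$ leaves some $s$ uncovered is stated rather than proved.
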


Define a Borel map $\theta:X_1\To\Omega$ (cf.\cite{Pec15}) satisfy that
$$-\theta(y)(i)\notin\supp{y}\text{ mod }b_i\text{ for all }i\ge 1.$$
Since $|\supp{y}\text{ mod }b_i|=b_i-1$ for each $y\in X_1$ and each $i\ge 1$, the map $\theta$ is well defined.

\begin{lemma}[Lemma 2.5 in \cite{Prz15}]\label{l:5.13}
	We have:
	\begin{itemize}
		\item[(i)] $T\circ\theta=\theta\circ S$;
		\item[(ii)] for each $y\in X_1$, $y\le \varphi(\theta(y))$;
		\item[(iii)] $\varphi(\Omega_0)\subset X_1$ (In particular, $\theta\circ\varphi|_{\Omega_0}=id_{\Omega_0}$).
	\end{itemize}
\end{lemma}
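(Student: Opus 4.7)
The plan is to verify each of the three parts by direct inspection of the definitions of $\theta$, $\varphi$, $\Omega'_0$, and $X_1$. The key observation throughout is that $y\in X_1$ means $\supp{y}\text{ mod }b_i$ omits exactly one residue class of $\Z/b_i\Z$, and by the defining property of $\theta$ that omitted class is $-\theta(y)(i)$.

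For (i), since $\supp{Sy}=\supp{y}-1$, the class of $\Z/b_i\Z$ omitted by $\supp{Sy}\text{ mod }b_i$ is shifted by $-1$ relative to the one omitted by $\supp{y}\text{ mod }b_i$; matching this with $-\theta(Sy)(i)$ yields $\theta(Sy)(i)=\theta(y)(i)+1=(T\theta(y))(i)$. For (ii), if $n\in\supp{y}$ then $n\not\equiv -\theta(y)(i)\pmod{b_i}$ for every $i$, i.e.\ $\theta(y)(i)+n\not\equiv 0\pmod{b_i}$, which is exactly the condition making $\varphi(\theta(y))(n)=1$; so $y\le\varphi(\theta(y))$ coordinatewise.

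For (iii), fix $\omega\in\Omega_0$ and $k\ge 1$. Every $n\equiv -\omega(k)\pmod{b_k}$ forces $\varphi(\omega)(n)=0$ (the $i=k$ clause in the definition of $\varphi$ fails), so the class $-\omega(k)$ is absent from $\supp{\varphi(\omega)}\text{ mod }b_k$. Conversely, for each $z\in\Z/b_k\Z$ with $z\ne\omega(k)$, the inclusion $\omega\in\Omega'_0$ rules out $\omega\in\Omega_{k,z}$ and therefore forces $\omega\in E_{k,z}^c$; this supplies some $s\ge 1$ with $\varphi(\omega)(-z+sb_k)=1$, realizing the class $-z$ in $\supp{\varphi(\omega)}\text{ mod }b_k$. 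Letting $z$ range over $\Z/b_k\Z\setminus\{\omega(k)\}$ exhibits all $b_k-1$ non-excluded classes, so $|\supp{\varphi(\omega)}\text{ mod }b_k|=b_k-1$ and hence $\varphi(\omega)\in X_1$. The identity $\theta\circ\varphi|_{\Omega_0}=\mathrm{id}_{\Omega_0}$ then follows from uniqueness: since $-\omega(i)$ is the unique class missing from $\supp{\varphi(\omega)}\text{ mod }b_i$, the defining property of $\theta$ forces $\theta(\varphi(\omega))(i)=\omega(i)$.

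The main obstacle is the lower bound $|\supp{\varphi(\omega)}\text{ mod }b_k|\ge b_k-1$ in part (iii): one must exhibit, for each non-excluded residue class, an actual element of $\supp{\varphi(\omega)}$ in that class, and this is precisely what the events $E_{k,z}^c$ in the definition of $\Omega'_0$ are engineered to supply. Parts (i) and (ii) are purely formal once the $X_1$ hypothesis is unwound, and the $T$-invariant intersection defining $\Omega_0$ is in fact not needed beyond $\omega\in\Omega'_0$ here; it is there to guarantee global features (injectivity, $T$-invariance) used elsewhere.
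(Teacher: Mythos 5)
Your proof is correct. The paper itself offers no argument here --- it imports the statement verbatim as Lemma 2.5 of \cite{Prz15} --- so there is nothing to compare against except the source; your self-contained verification matches the standard one, and you correctly identify the only nontrivial point, namely that for $\omega\in\Omega_0'$ the events $E_{k,z}^c$ (for $z\neq\omega(k)$) supply witnesses realizing each of the $b_k-1$ non-excluded residue classes in $\supp{\varphi(\omega)}\bmod b_k$. The only step left implicit is that $\varphi(\omega)\in X_\eta$ at all, but this is immediate: the bound $|\supp{\varphi(\omega)}\bmod b_k|=b_k-1<b_k$ is exactly $\B$-admissibility, and $X_\eta=X_\B$.
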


Fix a measure $\mu$ on $X_1$ is the equilibrium state for $\phi=a_{00}\mathbbm{1}_{[00]}+a_{01}\mathbbm{1}_{[01]}+a_{1}\mathbbm{1}_{[1]}$.

\begin{lemma}\label{l:5.14}
	We have $\theta_*(\mu)=\mathbb{P}$.
\end{lemma}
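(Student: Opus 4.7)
The plan is to push $\mu$ forward by $\theta$ to obtain a $T$-invariant Borel probability measure on $\Omega$, and then appeal to unique ergodicity of the Kronecker system $(\Omega,T,\mathbb{P})$. The first step is to observe that by Lemma \ref{l:X1} we have $\mu(X_1)=1$, and since $\theta$ is Borel on $X_1$, the measure $\theta_*(\mu)$ is a well-defined Borel probability measure on $\Omega$.

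Next I would use Lemma \ref{l:5.13}(i), which says $T\circ\theta=\theta\circ S$ on $X_1$. For any Borel set $A\subset\Omega$ this gives
$$\theta_*(\mu)(T^{-1}A)=\mu(\theta^{-1}T^{-1}A)=\mu(S^{-1}\theta^{-1}A)=\mu(\theta^{-1}A)=\theta_*(\mu)(A),$$
using $S$-invariance of $\mu$ in the third equality. Hence $\theta_*(\mu)$ is a $T$-invariant probability measure on $\Omega$.

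The final step is to invoke the fact that $(\Omega,T)$ is uniquely ergodic with unique invariant measure $\mathbb{P}$. This is standard for $\B$-free systems under the pairwise coprimality assumption: $(\Omega,T)$ is a rotation on a compact Abelian group by the element $\mathbf{1}=(1,1,\dots)$, and by the Chinese Remainder Theorem, the projection of the cyclic subgroup $\langle\mathbf{1}\rangle$ to any finite sub-product $\prod_{k=1}^K\Z/b_k\Z$ exhausts the whole group (since $b_1,\dots,b_K$ are pairwise coprime, $\prod_{k=1}^K\Z/b_k\Z\cong\Z/(b_1\cdots b_K)\Z$ with $\mathbf{1}\mapsto 1$). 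So $\langle\mathbf{1}\rangle$ is dense in $\Omega$, whence the rotation is uniquely ergodic and its unique invariant measure is the Haar measure $\mathbb{P}$. Combining with the previous step, $\theta_*(\mu)=\mathbb{P}$.

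The main conceptual obstacle is ensuring that the measurability and equivariance of $\theta$ (which is defined only on the Borel set $X_1$ via a choice of excluded residue in each coordinate) transfer cleanly to the pushforward, and that one cites unique ergodicity correctly; apart from that, the argument is essentially a one-line application of the equivariance relation in Lemma \ref{l:5.13}(i) together with Lemma \ref{l:X1}. No hard calculation is needed.
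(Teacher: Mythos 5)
Your argument is correct and is exactly the paper's proof: the paper likewise deduces $T$-invariance of $\theta_*(\mu)$ from the equivariance in Lemma \ref{l:5.13}(i) and then invokes unique ergodicity of $(\Omega,T)$. You merely spell out the details (well-definedness of the pushforward via Lemma \ref{l:X1} and the density/CRT justification of unique ergodicity) that the paper leaves implicit.
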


\begin{proof}
	By Lemma \ref{l:5.13} and the fact that $(\Omega, T)$ is uniquely ergodic, it ends the proof.
\end{proof}

Let $Y:=\theta^{-1}(\Omega_0)$. By Lemma \ref{l:5.13} and Lemma \ref{l:5.14}, we have $\mu(Y)=\theta_*(\mu)(\Omega_0)=\mathbb{P}(\Omega_0)=1$.

Here, we recall some observations in \cite[Section 2.2]{Prz15}. Let $Q=\{Q_0=[0]\cap Y,Q_1=[1]\cap Y\}$ be the generating partition of $Y$. Set
$$Q^-:=\bigvee_{j\ge 1}S^{-j}Q,\text{ and }\mathcal{A}:=\theta^{-1}(\mathcal{B}(\Omega)),$$
where $\mathcal{B}(\cdot)$ stands for the Borel $\sigma$-algebra. Since $Q$ is a generating partition, the $\sigma$-algebra $\cap_{m\ge 0}S^{-m}Q^-$ is the Pinsker $\sigma$-algebra of $(Y,\mathcal{B}(Y),\mu,S)$. By \cite[Lemma 2.11]{Prz15}, $\mathcal{A}\subset \cap_{m\ge 0}S^{-m}Q^-$ modulo $\mu$. It follows that almost every atom of the partition corresponding to the Pinsker $\sigma$-algebra of $(Y,\mathcal{B}(Y),\mu,S)$ is contained in an atom of the partition of $Y$ corresponding to $\mathcal{A}$. Also, we have $\mathcal{A}\subset S^{-m}Q^-$.

Fix $m\ge 0$. Let $\pi_m$ be the quotient map from $Y$ to the quotient space $Y/S^{-m}Q^-$. Let $\bar{\mu}_m:=(\pi_m)_*(\mu)$. So $S$ acts naturally on the quotient space $Y/S^{-m}Q^-$ as an endomorphism preserving $\bar{\mu}_m$ and $\pi_m\circ S=S\circ \pi_m$. Also, it can define the quotient map $\rho_m:Y/S^{-m}Q^-\To\Omega$ with $\rho_m\circ S=T\circ \rho_m$. Then $(\rho_m)_*(\bar{\mu}_m)=\mathbb{P}$. Thus it follows that $S^k\circ\varphi\circ\rho_m=\varphi\circ\rho_m\circ S^k$, that is,
\begin{equation}\label{e:commute}
\varphi\circ\rho_m(\bar y)(m+k)=\varphi\circ\rho_m(S^k\bar y)(m)\text{ for any }k\in\Z.
\end{equation}

We identify points in $Y/S^{-m}Q^-$ by the following ways: for $y\in Y$, let $\bar y$ be the atom of the partition associated to $S^{-m}Q^-$ which contains $y$, that is,
$$\bar y=\cdots i_{-1}i_0\Longleftrightarrow y\in S^{-m-1}Q_{i_0}\cap S^{-m-2}Q_{i_{-1}}\cap \cdots.$$

\begin{lemma}[Lemma 2.13 in \cite{Prz15}]\label{l:5.15}
	For each $m\ge 0$, $r=0,1,\dots,2m$ and $\bar{\mu}_m$-a.e. $\bar y\in Y/S^{-m}Q^-$, we have
	$$
	\begin{aligned}
	&\bar\mu_m(S^{m-r}Q_{i_{m-r}}|S^{m-r-1}Q_{i_{m-r-1}}\cap\cdots\cap S^{-m+1}Q_{i_{-m+1}}\cap S^{-m}Q_{i_{-m}}\cap S^{-m}Q^-)(\bar y)\\
	=&\bar\mu_m(S^{-m}Q_{i_{m-r}}|S^{-m}Q^-)(\bar yi_{-m}\dots i_{m-r-1})
	\end{aligned}
	$$
	for each choice of $i_k\in\{0,1\}$, $-m\le k\le m$.
\end{lemma}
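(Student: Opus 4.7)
The plan is to derive the identity from the shift-invariance of $\bar\mu_m$ together with the standard transformation rule for conditional expectations under a measure-preserving map. Because $\mu$ is $S$-invariant and $\pi_m\circ S=S\circ\pi_m$, the quotient measure $\bar\mu_m$ is $S$-invariant on $Y/S^{-m}Q^-$, so for every event $A$, every sub-$\sigma$-algebra $\mathcal{F}$ and every $k\in\Z$,
$$\bar\mu_m(A\mid\mathcal{F})(\bar y)=\bar\mu_m(S^{-k}A\mid S^{-k}\mathcal{F})(S^{-k}\bar y)$$
for $\bar\mu_m$-a.e.~$\bar y$. The right choice of shift is $k=2m-r$, which is exactly the number of additional generators $S^{-m}Q,\dots,S^{m-r-1}Q$ appearing in the LHS conditioning beyond $S^{-m}Q^-$.

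Three index identities then need to be verified. First, the event shifts correctly: $S^{-(2m-r)}\cdot S^{m-r}Q_{i_{m-r}}=S^{-m}Q_{i_{m-r}}$, which is exactly the event on the right-hand side of the lemma. Second, the enlarged $\sigma$-algebra $\bigvee_{j=-m}^{m-r-1}S^{j}Q\vee S^{-m}Q^-$, pulled back by $S^{-(2m-r)}$, becomes $\bigvee_{j=-3m+r}^{-m-1}S^{j}Q\vee S^{-3m+r}Q^-$; since $S^{-3m+r}Q^-=\bigvee_{j\le-3m+r-1}S^{j}Q$, the added generators fill in the gap and the union collapses to $\bigvee_{j\le-m-1}S^{j}Q=S^{-m}Q^-$, matching the conditioning on the right. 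Third, the shifted atom $S^{-(2m-r)}\bar y$ equals $\bar y i_{-m}\dots i_{m-r-1}$: if $y$ lies in the LHS conditioning atom, then $y_{r-m+\ell}=i_{m-r-\ell}$ for $\ell=1,\dots,2m-r$ (from the middle conditions) and $y_{m+1+\ell}$ equals the $(-\ell)$-th symbol of $\bar y$ for $\ell\ge 0$; reading $(S^{-(2m-r)}y)_{m+1+\ell}=y_{r-m+1+\ell}$ for $\ell=0,1,\dots$ therefore gives the string $i_{m-r-1},i_{m-r-2},\dots,i_{-m}$ followed by the entries of $\bar y$, which by the appending convention is precisely $\bar y i_{-m}\dots i_{m-r-1}$.

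The main obstacle is purely combinatorial: choosing a single shift parameter so that all three alignments happen simultaneously, and tracking how the added middle generators merge into the one-sided tail $S^{-m}Q^-$ after shifting. Once the index bookkeeping is in place, the lemma is an immediate consequence of the $S$-invariance transformation rule for conditional expectations.
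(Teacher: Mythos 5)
Your proof is correct. Note that the paper itself gives no argument for this statement --- it is imported verbatim as Lemma 2.13 of \cite{Prz15} --- so there is no in-paper proof to compare against; your derivation via the transformation rule for conditional expectations under a single shift by $k=2m-r$ is the standard one, and all three of your index verifications check out: $S^{-k}S^{m-r}Q_{i_{m-r}}=S^{-m}Q_{i_{m-r}}$, the full conditioning $\sigma$-algebra is $\bigvee_{j\le m-r-1}S^jQ$ which pulls back to $\bigvee_{j\le -m-1}S^jQ=S^{-m}Q^-$, and the atom tracking matches the paper's appending convention $\bar y=\cdots i_{-1}i_0\Leftrightarrow y_{m+1+\ell}=i_{-\ell}$. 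The only point I would tighten is where you invoke the rule directly on the quotient $Y/S^{-m}Q^-$: there $S$ acts as a non-invertible endomorphism (the one-sided shift deleting the last symbol), so ``$S^{-k}\bar y$'' is a set of $2^{k}$ atoms rather than a point. The clean way to say what you mean is to apply the identity $\mu(A\mid\mathcal F)(y)=\mu(S^{-k}A\mid S^{-k}\mathcal F)(S^{-k}y)$ upstairs in $(Y,\mathcal B(Y),\mu,S)$, where $S$ is invertible and the identity is immediate from the change of variables $\int_F g\circ S^{-k}\,d\mu=\int_{S^{-k}F}g\,d\mu$, and then observe that $\mu(S^{-m}Q_{i_{m-r}}\mid S^{-m}Q^-)$ is $S^{-m}Q^-$-measurable and hence descends to $Y/S^{-m}Q^-$, evaluated at $\pi_m(S^{-k}y)=\bar y i_{-m}\dots i_{m-r-1}$ for $y$ in the conditioning atom --- which is exactly the computation you already carried out. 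With that rephrasing the argument is complete.
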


Now, we can prove the uniqueness of equilibrium state for $\phi=a_{00}\mathbbm{1}_{[00]}+a_{01}\mathbbm{1}_{[01]}+a_{1}\mathbbm{1}_{[1]}$ in the $\B$-free system with $2\in\B$.

\begin{theorem}\label{t:unique}
	Suppose that $\B=\{b_1,b_2,\cdots\}$ satisfies (\ref{condition1}) and $b_1=2$. Let $\phi=a_{00}\mathbbm{1}_{[00]}+a_{01}\mathbbm{1}_{[01]}+a_{1}\mathbbm{1}_{[1]}$. Then $\phi$ has unique equilibrium state.
\end{theorem}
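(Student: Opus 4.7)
The plan is to show that every ergodic equilibrium state $\mu$ for $\phi$ must coincide with $\nu_\eta * B_{p,1-p}$, where $p$ is the value from Proposition \ref{p:EqState}, by disintegrating $\mu$ over the zero-entropy factor $(\Omega,T,\mathbb{P})$ and identifying each fiber measure as a product of identical Bernoullis on the free coordinates. Uniqueness then follows immediately.

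Fix an ergodic equilibrium state $\mu\in\mathcal{M}^e(X_\eta,S)$ for $\phi$. Lemma \ref{l:X1} together with Lemma \ref{l:5.13}(iii) gives $\mu(Y)=1$, and Lemma \ref{l:5.14} gives $\theta_*\mu=\mathbb{P}$. Since $(\Omega,T,\mathbb{P})$ has zero entropy, the Abramov--Rokhlin formula yields $h_\mu(X_\eta,S)=h_\mu(X_\eta,S\mid\mathcal{A})$ with $\mathcal{A}=\theta^{-1}(\mathcal{B}(\Omega))$. Disintegrate $\mu=\int_\Omega\mu_\omega\,d\mathbb{P}(\omega)$; by construction each $\mu_\omega$ is supported in $\{y:y\le\varphi(\omega)\}$, and the equivariance in Lemma \ref{l:5.13}(i) provides $S_*\mu_\omega=\mu_{T\omega}$ for $\mathbb{P}$-a.e.\ $\omega$.

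The central step is to show that for $\mathbb{P}$-a.e.\ $\omega$, the fiber measure $\mu_\omega$ is a product of identical Bernoullis on the free coordinates $\{n:\varphi(\omega)(n)=1\}$, with a common parameter $1-q$ independent of $\omega$. I would adapt the Pinsker-sigma-algebra argument from \cite[Section 2.2]{Prz15} and \cite{Pec15}: the inclusion $\mathcal{A}\subset\bigcap_{m\ge 0}S^{-m}Q^-$ (established in the excerpt) combined with Lemma \ref{l:5.15} forces the limiting conditional probabilities $\bar\mu_m(S^{-m}Q_1\mid S^{-m}Q^-)$ at free positions to be $\mathcal{A}$-measurable and to decouple across distinct free positions; $S$-invariance and ergodicity then pin down a single constant parameter $1-q$, so that $\mu=\nu_\eta*B_{q,1-q}$ for some $q\in(0,1)$.

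Finally, inserting this ansatz into the equilibrium equation $P(X_\eta,\phi)=h_\mu(X_\eta,S)+\int\phi\,d\mu$, using the entropy formula from Proposition 2.1.9 of \cite{Prz15} and the two-block marginals in (\ref{kappa}), reduces the identification of $q$ to maximizing
\[
F(q)=-q\log q-(1-q)\log(1-q)+2a_{00}q+(a_{01}+a_1)(1-q)
\]
over $q\in(0,1)$. The function $F$ is strictly concave with unique maximizer $q=p=2^{2a_{00}}/(2^{a_1+a_{01}}+2^{2a_{00}})$, as already observed in the proof of Proposition \ref{p:EqState}, so $\mu=\nu_\eta*B_{p,1-p}$ is forced. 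The main obstacle is the central step: transferring the Pinsker-based factorization argument of \cite{Prz15} (originally designed for the Mirsky-type measure of maximal entropy) to a general equilibrium state for $\phi$. Once the Bernoulli fiber structure is in place, the concluding scalar optimization is routine.
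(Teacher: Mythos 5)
Your overall architecture (restrict to $X_1$ via Lemma \ref{l:X1}, disintegrate over $(\Omega,T,\mathbb{P})$ via $\theta$, identify the fibers, conclude) matches the paper's, but there is a genuine gap in your central step, and it is exactly the step you flag as "the main obstacle." You claim that the inclusion $\mathcal{A}\subset\bigcap_{m\ge0}S^{-m}Q^-$, Lemma \ref{l:5.15}, $S$-invariance and ergodicity already force each fiber $\mu_\omega$ to be a product of identical Bernoullis on the free coordinates, so that $\mu=\nu_\eta*B_{q,1-q}$ for \emph{some} $q$, with the equilibrium condition entering only afterwards to select $q$ by maximizing $F$. That ordering cannot work: there are many ergodic invariant measures carried by $X_1$ with $\theta_*\mu=\mathbb{P}$ whose fibers are not product measures -- for instance $\nu_\eta*\rho$ for a non-Bernoulli $\rho$ of full support, or any measure in which the conditional law of a free coordinate given the past genuinely depends on that past. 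Lemma \ref{l:5.15} only converts a statement about the single conditional probability $\bar\mu_m(S^{-m}Q_{i}\mid S^{-m}Q^-)$ into a product formula; it does not by itself make that conditional probability constant (i.e., independent of $\bar y$ beyond the value of $\varphi(\rho_m(\bar y))$ at the relevant coordinate), and ergodicity of $\mu$ does not supply this either.

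The missing ingredient is that the equilibrium identity must be used to \emph{establish} the fiber structure, not merely to pick the parameter. In the paper's proof, one writes $P(X_\eta,\phi)=h_\mu+\int\phi\,d\mu$ as an integral over $Y/S^{-m}Q^-$ decomposed into the three sets $B^{00}_m$, $B^{01}_m$, $B^1_m$; the contributions from $B^{00}_m$ and $B^{01}_m$ are rigid (conditional probabilities $(1,0)$, zero conditional entropy), and on $B^1_m$ the integrand $H_\mu(Q\mid Q^-)+2a_{00}\bar\mu_m(Q_0\mid Q^-)+(a_{01}+a_1)\bar\mu_m(Q_1\mid Q^-)$ is bounded above by $\log\bigl(2^{2a_{00}}+2^{a_{01}+a_1}\bigr)$ via the elementary inequality $\sum_i p_i(b_i-\log p_i)\le\log\sum_i 2^{b_i}$, with equality if and only if $p_i=2^{b_i}/\sum_j 2^{b_j}$. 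Since $P(X_\eta,\phi)$ equals the resulting upper bound (Proposition \ref{PonX}), equality holds $\bar\mu_m$-a.e., which pins the conditional distribution at each free position, given the entire past, to $(\lambda^1_0,\lambda^1_1)$; only then does Lemma \ref{l:5.15} yield the product formula for all cylinders. So the concave optimization you defer to a final "routine" step is in fact the engine that produces the Bernoulli fiber structure, applied coordinatewise under the conditional expectation rather than once at the level of $\int\phi\,d\mu$. As written, your proposal does not close this step.
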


\begin{proof}
	Let $\mu$ be an ergodic equilibrium state for $\phi$.
	We will show that the conditional measures $\mu_\omega$ in the disintegration
	$$\mu=\int_\Omega\mu_\omega d\mathbb{P}$$
	of $\mu$ over $\mathbb{P}$ given by the mapping $\theta:X_1\To \Omega$ are unique $\mathbb{P}$-a.e. $\omega\in\Omega_0$.
	This will show the uniqueness of equilibrium state.
	We define another measure $\mu^*$ in the following way, which will be showed that $\mu=\mu^*$.
	For each $\omega\in\Omega_0$, we have $\varphi(\omega)\in Y$.
	By Lemma \ref{l:5.13}, $\varphi(\omega)$ is the largest element in $\theta^{-1}(\omega)$.
	Thus for each $u=u_{-k}\cdots u_k\in\{0,1\}^{2k+1}\le \varphi(\omega)[-k,k]$, we set $$\mu^*_\omega([u]):=\prod_{-k\le i\le k,\varphi(\omega)(i)=1}\lambda^1_{u_i},$$
	where $\lambda^1_0=\frac{2^{2a_{00}}}{2^{2a_{00}}+2^{a_{01}+a_1}}$ and $\lambda^1_1=1-\lambda^1_0$.
	Finally, we set
	$$\mu^*=\int_{\Omega_0}\mu^*_\omega d\mathbb{P}.$$
	
	We will show that $\mu_\omega=\mu^*_\omega$ for $\mathbb{P}$-a.e. $\omega\in\Omega_0$, which implies that $\mu=\mu^*$. We will prove that for any $m\ge 0$ and any  $A\in\bigvee_{j=-m}^mS^jQ$,
	\begin{equation}\label{e:unique}
		\mu_\omega(A)=\mu^*_\omega(A),\,\mathbb{P}\text{-a.e. }\omega\in\Omega.
	\end{equation}
	Recall that
	\begin{equation}\label{e:tool1}
		\mu_\omega(A)=\mathbb{E}^\mu(A|\Omega)(\omega).
	\end{equation}
	To get the equation (\ref{e:unique}), we will step by step make use of the equality
	\begin{equation}\label{e:tool2}
		\mathbb{E}^\mu(A|\Omega)(\omega)=\mathbb{E}^\mu(\mathbb{E}^\mu(A|Y/S^{-m}Q^-)(\bar y_m)|\Omega)(\omega)
	\end{equation}
	where $A\in\bigvee_{j=-m}^mS^iQ$, $m\ge 0$ and show that
	\begin{equation}\label{e:tool3}
		\mathbb{E}^\mu(A|Y/S^{-m}Q^-)(\bar y_m)=\mu^*_\omega(A)
	\end{equation}
	for all $\bar y_m$ having the same $\rho_m$-projection $\omega$.
	
	First, we need some denotations. For $m\ge 0$, let
	$$
	\begin{aligned}
	\hat C^{00}_m:=&\varphi^{-1}(S^{-m}(Q_0\cap S^{-1}Q_0))=\{\omega\in\Omega_0:\varphi(\omega)(m)=0,\varphi(\omega)(m+1)=0\},\\
	\hat C^{01}_m:=&\varphi^{-1}(S^{-m}(Q_0\cap S^{-1}Q_1))=\{\omega\in\Omega_0:\varphi(\omega)(m)=0,\varphi(\omega)(m+1)=1\},\\
	\hat C^1_m:=&\varphi^{-1}(S^{-m}Q_1)=\{\omega\in\Omega_0:\varphi(\omega)(m)=1\}.\\
	\end{aligned}
	$$
	Then $\Omega_0=\hat C^{00}_m\cup\hat C^{01}_m\cup\hat C^1_m$ and $Y=\theta^{-1}(\Omega_0)=\theta^{-1}(\hat C^{00}_m)\cup\theta^{-1}(\hat C^{01}_m)\cup\theta^{-1}(\hat C^1_m)$.
	Let $B_m^j:=\rho_m^{-1}(\hat C^j_m)$ for $j\in\{00,01,1\}$.
	Then we have
	$$Y/S^{-m}Q^-=B_m^{00}\cup B_m^{01}\cup B_m^1.$$
	By the definition of $\rho_m$,
	$$\bar\mu_m(B_m^{00})=\mu(\theta^{-1}(\hat C^{00}_m))=\mathbb{P}(\hat C^{00}_m)=\nu_\eta([00])=1-2d,$$
	and $\bar\mu_m(B_m^{01})=\bar\mu_m(B_m^1)=\nu_\eta([01])=\nu_\eta([1])=d$.
	
	Now, we prove the equality (\ref{e:unique}) in two cases: $m=0$ and $m\ge 0$, which the first case is not necessary but it can be seen as a toy model for the second case.
	\begin{itemize}
		\item[(1)] Toy model: the case of $m=0$.
	\end{itemize}
	We first show that $\theta^{-1}(\hat C^{00}_0)\subset Q_0\cap S^{-1}Q_0$.
	For any $y\in\theta^{-1}(\hat C^{00}_0)$, we have $\varphi(\theta(y))(0)=\varphi(\theta(y))(1)=0$.
	Since $y\le \varphi(\theta(y))$, we have $y\in Q_0\cap S^{-1}Q_0$.
	Thus for any $\bar y\in B^{00}_0$, $\pi^{-1}_0(\bar y)\subset \pi^{-1}_0(B^{00}_0)=\theta^{-1}(\hat C^{00}_0)\subset Q_0\cap S^{-1}Q_0$, which implies that for $\bar y\in B^{00}_0$,
	$$
	\begin{aligned}
	&\bar \mu_0(Q_0\cap S^{-1}Q_0|Q^-)(\bar y)=1,\\
	&\bar \mu_0(Q_0\cap S^{-1}Q_1|Q^-)(\bar y)=0,\\
	&\bar \mu_0(Q_1|Q^-)(\bar y)=0.
	\end{aligned}
	$$
	Therefore,
	\begin{equation}\label{e:B^{00}_0}
		\begin{aligned}
		&\int_{B^{00}_0}a_{00}\bar\mu_0(Q_0\cap S^{-1}Q_0|Q^-)(\bar y)+a_{01}\bar\mu_0(Q_0\cap S^{-1}Q_1|Q^-)(\bar y)\\
		&+a_1\bar \mu_0(Q_1|Q^-)(\bar y) d\bar\mu_0(\bar y)\\
		=&a_{00}\mu_0(B^{00}_0)=a_{00}(1-2d).
		\end{aligned}
	\end{equation}
	
	Also, we have $\theta^{-1}(\hat C^{00}_0\cup\hat C^{01}_0)\subset Q_0$.
	Thus for any $\bar y\in B^{00}_0\cup B^{01}_0$,
	\begin{equation}\label{e:main1}
	(\bar \mu_0(Q_0|Q^-)(\bar y),\bar \mu_0(Q_1|Q^-)(\bar y))=(1,0)=:(\lambda^0_0,\lambda^0_1),
	\end{equation}
	which implies that $H_\mu(Q|Q^-)(\bar y)=0$.
	In particular, for $\bar y\in B^{01}_0$, we have $$\bar\mu_0(Q_0\cap S^{-1}Q_j|Q^-)(\bar y)=\bar\mu_0(S^{-1}Q_j|Q^-)(\bar y)\text{ for }j=0,1.$$
	
	We claim that $SB^{01}_0=B^1_0$.
	Indeed, for any $\bar y\in B^{01}_0$, $\varphi(\rho_0(S\bar y))=S\varphi(\rho_0(\bar y))\in S\varphi(\hat C^{01}_0)\subset [1]$. Thus $S\bar y\in B^1_0$.
	Conversely, for any $\bar y\in B^1_0$, let $y$ with $\pi_0(y)=\bar y$ and $\bar y'=\pi_0(S^{-1}y)$.
	So $S\bar y'=\bar y$.
	Then $S\varphi(\rho_0(\bar y'))=\varphi(\theta(y))\in[1]$.
	Since $b_1=2$, we have $\varphi(\rho_0(\bar y'))(0)=0$, $\varphi(\rho_0(\bar y'))(1)=1$.
	Thus $\bar y'\in B^{01}_0$, which ends the proof of the claim.
	Therefore,
	\begin{equation}\label{e:B^{01}_0}
	\begin{aligned}
	&\int_{B^{01}_0}a_{00}\bar\mu_0(Q_0\cap S^{-1}Q_0|Q^-)(\bar y)+a_{01}\bar\mu_0(Q_0\cap S^{-1}Q_1|Q^-)(\bar y)\\
	&+a_1\bar \mu_0(Q_1|Q^-)(\bar y) d\bar\mu_0(\bar y)\\
	=&\int_{B^{01}_0}a_{00}\bar\mu_0(S^{-1}Q_0|Q^-)(\bar y)+a_{01}\bar\mu_0(S^{-1}Q_1|Q^-)(\bar y)d\bar\mu_0(\bar y)\\
	=&\int_{B^1_0}a_{00}\bar\mu_0(Q_0|Q^-)(\bar y)+a_{01}\bar\mu_0(Q_1|Q^-)(\bar y)d\bar\mu_0(\bar y).
	\end{aligned}
	\end{equation}
	
	Since $2\in\B$, one can prove that $\theta^{-1}(\hat C^1_0)\subset S^{-1}Q_0$.
	Indeed, if $y\in\theta^{-1}(\hat C^1_0)$, we have $\varphi(\theta(y))(0)=1$.
	Thus $y(1)\le \varphi(\theta(y))(1)=0$ since $2\in\B$ and $y\le \varphi(\theta(y))$.
	It follows that for $\bar y\in B^1_0$, we have $\pi_0^{-1}(\bar y)\subset S^{-1}Q_0$, that is, $\bar\mu(Q_0\cap S^{-1}Q_1|Q^-)(\bar y)=0$ and $\bar\mu_0(Q_0\cap S^{-1}Q_0|Q^-)(\bar y)=\bar\mu_0(Q_0|Q^-)(\bar y)$.
	Therefore,
	\begin{equation}\label{e:B^1_0}
	\begin{aligned}
	&\int_{B^1_0}a_{00}\bar\mu_0(Q_0\cap S^{-1}Q_0|Q^-)(\bar y)+a_{01}\bar\mu_0(Q_0\cap S^{-1}Q_1|Q^-)(\bar y)\\
	&+a_1\bar \mu_0(Q_1|Q^-)(\bar y) d\bar\mu_0(\bar y)\\
	=&\int_{B^1_0}a_{00}\bar\mu_0(Q_0|Q^-)(\bar y)+a_1\bar\mu_0(Q_1|Q^-)(\bar y)d\bar\mu_0(\bar y).
	\end{aligned}
	\end{equation}
	
	Sum up with (\ref{e:B^{00}_0}), (\ref{e:B^{01}_0}) and (\ref{e:B^1_0}), we have
	$$
	\begin{aligned}
	&P(X_\eta,\phi)
	=h_{\mu}(X_\eta,S)+\int\phi d\mu\\
	=&\int_{Y/Q^-}H_\mu(Q|Q^-)(\bar y)+a_{00}\bar\mu_0(Q_0\cap S^{-1}Q_0|Q^-)(\bar y)\\
	&+a_{01}\bar\mu_0(Q_0\cap S^{-1}Q_1|Q^-)(\bar y)+a_1\bar \mu_0(Q_1|Q^-)(\bar y) d\bar\mu_0(\bar y)\\
	=&a_{00}(1-2d)+\int_{B^1_0} H_\mu(Q|Q^-)(\bar y)+2a_{00}\bar\mu_0(Q_0|Q^-)(\bar y)\\
	&+(a_{01}+a_1)\bar\mu_0(Q_1|Q^-)(\bar y)d\bar\mu_0(\bar y)\\
	\le&a_{00}(1-2d)+\bar\mu_0(B^1_0)\log(2^{2a_{00}}+2^{a_{01}+a_1})\\
	=&a_{00}(1-2d)+d\log(2^{2a_{00}}+2^{a_{01}+a_1}).
	\end{aligned}
	$$
	The inequality comes from $\sum p_i(b_i-\log p_i)\le\log(\sum 2^{b_i})$ for any $\sum p_i=1$ and any $b_i$. So for $\bar\mu_0$-a.e. $\bar y\in B^1_0$, we have \begin{equation}\label{e:main2}
	(\bar\mu_0(Q_0|Q^-)(\bar y),\bar\mu_0(Q_1|Q^-)(\bar y))=(\lambda^1_0, \lambda^1_1).
	\end{equation}

	Notice that (\ref{e:main1}) and (\ref{e:main2}) do not depend on $\bar y$ itself but only on the values $\varphi(\rho_0(\bar y))(0)$ and $\varphi(\rho_0(\bar y))(1)$, which implies that (\ref{e:tool3}) holds. Sum up with (\ref{e:tool1}), (\ref{e:tool2}) and (\ref{e:tool3}), we conclude that in the disintegration of $\mu$ over $\mathbb{P}$ via $\theta$, for $\mathbb{P}$-a.e. $\omega\in\Omega$, $\mu_\omega(Q_j)=\mu_\omega^*(Q_j)$ for $j=0,1$.
	
	\begin{itemize}
		\item[(2)] General case: the case of $m\ge 0$.
	\end{itemize}
	Fix $m\ge 0$. As in the case of $m=0$, we obtain that $\theta^{-1}(\hat C^{00}_m)\subset S^{-m}(Q_0\cap S^{-1}Q_0)$, which implies that for $\bar y\in B^{00}_m$,
	$$
	\begin{aligned}
	&\bar \mu_m(S^{-m}(Q_0\cap S^{-1}Q_0)|S^{-m}Q^-)(\bar y)=1,\\
	&\bar \mu_m(S^{-m}(Q_0\cap S^{-1}Q_1)|S^{-m}Q^-)(\bar y)=0,\\
	&\bar \mu_m(S^{-m}Q_1|S^{-m}Q^-)(\bar y)=0.
	\end{aligned}
	$$
	Similar to the case of $m=0$, we have $\theta^{-1}(\hat C^{00}_m\cup\hat C^{01}_m)\subset S^{-m}Q_0$, which implies that for any $\bar y\in B^{00}_m\cup B^{01}_m$,
	\begin{equation}\label{e:main3}
	(\bar \mu_m(S^{-m}Q_0|S^{-m}Q^-)(\bar y),\bar \mu_m(S^{-m}Q_1|S^{-m}Q^-)(\bar y))=(1,0)=(\lambda^0_0,\lambda^0_1),
	\end{equation}
	and $H_\mu(S^{-m}Q|S^{-m}Q^-)(\bar y)=0$.
	In particular, for $\bar y\in B^{01}_m$, we have $$\bar\mu_m(S^{-m}(Q_0\cap S^{-1}Q_j)|S^{-m}Q^-)(\bar y)=\bar\mu_m(S^{-m}(S^{-1}Q_j)|S^{-m}Q^-)(\bar y)\text{ for }j=0,1.$$
	And we also obtain that $SB^{01}_m=B^1_m$ and $\theta^{-1}(\hat C^1_m)\subset S^{-m}(S^{-1}Q_0)$. Therefore, similar to the case of $m=0$, the computation of
	$$
	\begin{aligned}
	&h_{\mu}(X_\eta,S)+\int\phi\circ S^m d\mu\\
	=&\int_{Y/S^{-m}Q^-}H_\mu(S^{-m}Q|S^{-m}Q^-)+a_{00}\bar\mu_m(S^{-m}(Q_0\cap S^{-1}Q_0)|Q^-)(\bar y)\\
	&+a_{01}\bar\mu_m(S^{-m}(Q_0\cap S^{-1}Q_1)|S^{-m}Q^-)(\bar y)+a_1\bar \mu_m(S^{-m}Q_1|S^{-m}Q^-)(\bar y) d\bar\mu_m(\bar y)
	\end{aligned}
	$$
	leads to
	\begin{equation}\label{e:main4}
	(\bar\mu_m(S^{-m}Q_0|S^{-m}Q^-)(\bar y),\bar\mu_m(S^{-m}Q_1|S^{-m}Q^-)(\bar y))=(\lambda^1_0, \lambda^1_1).
	\end{equation}
	for $\bar\mu_m$-a.e. $\bar y\in B^1_m$.
	In order to prove that $\mu_\omega=\mu^*_\omega$ for $A\in\bigvee_{i=-m}^mS^iQ$, choose $(i_{-m},\dots,i_0,\dots,i_m)\in\{0,1\}^{2m+1}$.
	By the chain conditional probabilities and Lemma \ref{l:5.15}, we have
	$$
	\begin{aligned}
	&\bar\mu_m(\bigcap^{2m}_{r=0}S^{m-r}Q_{i_{m-r}}|S^{-m}Q^-)(\bar y)\\
	=&\prod^{2m}_{r=0}\bar\mu_m(S^{m-r}Q_{i_{m-r}}|S^{m-r-1}Q_{i_{m-r-1}}\cap\cdots\cap S^{-m}Q_{i_{-m}}\cap S^{-m}Q^-)(\bar y)\\
	=&\prod^{2m}_{r=0}\bar\mu_m(S^{-m}Q_{i_{m-r}}|S^{-m}Q^-)(\bar yi_{-m}\dots i_{m-r-1}).
	\end{aligned}
	$$
	By (\ref{e:main3}) and (\ref{e:main4}), for $\bar\mu$-a.e. $\bar y\in Y/S^{-m}Q^-$,
	$$\bar\mu_m(S^{-m}Q_{i_{m-r}}|S^{-m}Q^-)(\bar yi_{-m}\dots i_{m-r-1})=\lambda^{j_r}_{i_{m-r}},$$
	where $j_r=\varphi(\rho_m(\bar yi_{-m}\dots i_{m-r-1}))(m)$. And by equation (\ref{e:commute}), $j_r=\varphi(\rho_m(\bar y))(m+2m-r)$
	Sum up with (\ref{e:tool1}), (\ref{e:tool2}) and (\ref{e:tool3}), (\ref{e:unique}) is proved.
	
	It follows that $\mu_\omega=\mu^*_\omega$ for $\mathbb{P}$-a.e. $\omega\in\Omega$ and $\mu=\mu^*$, which ends the proof.
\end{proof}

\begin{remark}
	Similar to Remark \ref{r:5.7}, we do not know whether the uniqueness of equilibrium state holds for $\phi\in\mathscr{C}_n\setminus\mathscr{C}_2$ where $n\ge 3$.
\end{remark}


By using Theorem \ref{t:1.1}, we will show that $\nu_\eta*B_{p,1-p}$ is not Gibbs measure for some $\phi$.
We consider $\phi=a_{00}\mathbbm{1}_{[00]}+a_{01}\mathbbm{1}_{[01]}+a_{1}\mathbbm{1}_{[1]}$ with $a_1>\max\{a_{00},a_{01}\}$. It is necessary to sure that the condition
\begin{equation}\label{con1.1}
  P\le(\mathrm{Var}\phi([0])-\log (1-p))d+d^{\phi}-\mathrm{Var}\phi([0])
\end{equation}
can be satisfied, where $p=\frac{2^{2a_{00}}}{2^{a_1+a_{01}}+2^{2a_{00}}}$.
Firstly, we estimate the quantity $d^{\phi}$. It is showed that for $0<q<1$,
$$\int\phi d\nu_\eta*B_{q,1-q}=a_{00}(1-2d)+d(2a_{00}q+(a_{01}+a_{1})(1-q)).$$
So
\begin{equation}\label{dphi}
  d^{\phi}\ge \sup_{0<q<1}\int\phi d\nu_\eta*B_{q,1-q}=a_{00}(1-2d)+d\max\{2a_{00}, a_1+a_{01}\}.
\end{equation}

Since
$$P=a_{00}(1-2d)+d\log(2^{a_{1}+a_{01}}+2^{2a_{00}})=a_{00}(1-2d)+d(a_{1}+a_{01})-d\log(1-q),$$
we can replace the condition (\ref{con1.1}) by the condition
\begin{equation}\label{con1.2}
  d(a_1+a_{01})\le(d-1)\mathrm{Var}\phi([0])+d\max\{2a_{00}, a_1+a_{01}\}.
\end{equation}

Notice that $2\in\B$ implies that $0\le d<1/2$.
If $2a_{00}\le a_1+a_{01}$, then the condition (\ref{con1.2}) is satisfied when $\mathrm{Var}\phi([0])=0$, which means that $a_{00}=a_{01}$ and $2a_{00}\le a_1+a_{01}$ is natural.

If $2a_{00}> a_1+a_{01}$, then we have $a_1>a_{00}>a_{01}$. So the condition (\ref{con1.2}) becomes to be
$$(2d-1)(a_{00}-a_{01})+d(a_{00}-a_1)\ge 0,$$
which can not be satisfied when $a_1>a_{00}>a_{01}$.

So, by the above consideration, we have
\begin{proposition}
  Suppose that $\B=\{b_1,b_2,\cdots\}$ satisfies (\ref{condition1}) and $b_1=2$. For $\phi=a_0\mathbbm{1}_{[0]}+a_1\mathbbm{1}_{[1]}$, if $a_1\ge a_0$,
then the equilibrium state $\kappa=\nu_\eta*B_{p,1-p}$ is not Gibbs measure for $\phi$, where
  $$p=\frac{2^{a_0}}{2^{a_1}+2^{a_0}}.$$
\end{proposition}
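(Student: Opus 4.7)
The plan is to apply Theorem~\ref{t:1.1} directly to $X = X_\eta$, $\nu = \nu_\eta$, $q = p$, and $\ww\phi = \phi$, which is valid since $X_\eta$ is hereditary ($\ww X_\eta = X_\eta$). Because $\phi = a_0\mathbbm{1}_{[0]} + a_1\mathbbm{1}_{[1]}$ is locally constant on each of $[0]$ and $[1]$, we have $\mathrm{Var}\phi([0]) = \mathrm{Var}\phi([1]) = 0$, so the three hypotheses of Theorem~\ref{t:1.1} reduce to (i) $\ww P \le -d\log(1-p) + d^\phi$, (ii) $\sup\phi([1]) = a_1 \ge a_0 = \sup\phi([0])$, which is the standing assumption, and (iii) $0 \le -\log(1-p)$, which is immediate from $0 < p < 1$.

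Next I would verify the measure-theoretic hypotheses of Theorem~\ref{t:1.1}. The non-atomicity of $\nu_\eta$ and the ergodicity of both $\nu_\eta$ and $\kappa$ are recorded in the excerpt. The identity $D_{\nu_\eta} = D$ holds because $\eta$ is $\nu_\eta$-generic and itself already witnesses the maximal density $d = \prod_{i\in\N}(1 - 1/b_i)$. For $D^\phi_\kappa = D^\phi$, the inequality $a_1 \ge a_0$ makes the $\phi$-sum on any block monotone in $\#_1$, so the maximum is realised on $\#_1$-maximal blocks; Lemma~\ref{JMlem1.1} then shows that such a block, being charged by $\nu_\eta$, is also charged by $\kappa$.

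Finally I would verify inequality (i) by computing each side from prior results. Writing $\phi = a_0\mathbbm{1}_{[00]} + a_0\mathbbm{1}_{[01]} + a_1\mathbbm{1}_{[1]}$ and applying Proposition~\ref{PonX} with $a_{00} = a_{01} = a_0$, I get $\ww P = a_0(1-d) + d\log(2^{a_0} + 2^{a_1})$. A direct manipulation of $p = 2^{a_0}/(2^{a_0}+2^{a_1})$ gives $-\log(1-p) = \log(2^{a_0}+2^{a_1}) - a_1$. Finally, the explicit integral $\int\phi\,d(\nu_\eta * B_{q,1-q}) = a_0(1-2d) + d(2a_0 q + (a_0 + a_1)(1-q))$ derived in the excerpt, maximized by letting $q \to 0$ (using $a_1 \ge a_0$), yields $d^\phi \ge a_0(1-d) + d a_1$. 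These three pieces combine to give $-d\log(1-p) + d^\phi \ge \ww P$, in fact with equality.

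The main verification burden, as I see it, is the identity $D^\phi_\kappa = D^\phi$: one must produce $\#_1$-extremal words of $X_\eta$ that are charged by $\kappa$ and not merely by $\nu_\eta$. This is precisely the content unlocked by Lemma~\ref{JMlem1.1} applied to a $\nu_\eta$-positive $\#_1$-maximal word (whose existence is guaranteed by $D_{\nu_\eta} = D$); everything else is a direct substitution.
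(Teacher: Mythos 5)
Your proposal is correct and follows essentially the same route as the paper: reduce to Theorem~\ref{t:1.1} with $\mathrm{Var}\phi([0])=\mathrm{Var}\phi([1])=0$, compute $P(X_\eta,\phi)=a_0(1-d)+d\log(2^{a_0}+2^{a_1})$, bound $d^\phi\ge a_0(1-d)+a_1d$ via the convolution measures, and check $-\log(1-p)=\log(2^{a_0}+2^{a_1})-a_1$. The only cosmetic difference is that the paper justifies $D_{\nu_\eta}=D$ and $D^\phi_\kappa=D^\phi$ in one stroke from the full support of $\nu_\eta$ (hence of $\kappa$), whereas you argue via genericity of $\eta$ and Lemma~\ref{JMlem1.1}; the full-support argument is the cleaner of the two.
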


\begin{proof}
  With the assumptions of $a_0$ and $a_1$, we have
  $$\sup\phi([1])\ge\sup\phi([0]),\text{ and }\mathrm{Var}\phi([1])=0\le\mathrm{Var}\phi([0])-\log (1-p).$$
  Since $a_1\ge a_0$, by inequality (\ref{dphi}),
  $$d^{\phi}\ge a_0(1-d)+a_1d.$$
  So we have
  $$
  \begin{aligned}
    P(X_\eta,\phi)= &a_0(1-d)+d\log(2^{a_1}+2^{a_0})\\
    =&a_0(1-d)+a_1d-d\log(1-p)\\
    \le&d^\phi-d\log (1-p).
  \end{aligned}
  $$
  Since $\nu_\eta$ is full support on $X_\eta$, $\kappa=\nu_\eta*B_{p,1-p}$ is full support on $X_\eta$. Therefore, $D_{\nu_\eta}=D$ and $D^\phi_\kappa=D^\phi$. Then by Theorem \ref{t:1.1}, $\kappa=\nu_\eta*B_{p,1-p}$ is not Gibbs measure for $\phi$.
\end{proof}

Here, we give an example that the equilibrium state $\nu_\eta*B_{p,1-p}$ is not Gibbs measure for more general $\phi=a_{00}\mathbbm{1}_{[00]}+a_{01}\mathbbm{1}_{[01]}+a_{1}\mathbbm{1}_{[1]}$ on $(X_\eta, S)$ with $2\in\B$, but we can not use Theorem \ref{t:1.1} directly.

\begin{proposition}\label{p:notGibbs}
  Suppose that $\B=\{b_1,b_2,\cdots\}$ satisfies (\ref{condition1}) and $b_1=2$. For $\phi=a_{00}\mathbbm{1}_{[00]}+a_{01}\mathbbm{1}_{[01]}+a_1\mathbbm{1}_{[1]}$, if $a_1\ge \max\{a_{00},a_{01}\}$ and $2a_{00}\le a_1+a_{01}$,
then the equilibrium state $\kappa=\nu_\eta*B_{p,1-p}$ is not Gibbs measure for $\phi$, where
  $$p=\frac{2^{2a_{00}}}{2^{a_1+a_{01}}+2^{2a_{00}}}.$$
\end{proposition}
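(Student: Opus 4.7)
The plan is to adapt the argument of Theorem \ref{mainth} rather than invoke Theorem \ref{t:1.1} directly; indeed, the term $\mathrm{Var}\phi([0])=|a_{00}-a_{01}|$ may be strictly positive and its contribution is then not absorbed by the first hypothesis of Theorem \ref{t:1.1}. The idea is to replace the coarse coordinatewise variation estimate used in the proof of Theorem \ref{mainth} by a tight identity for $S_n\phi(y)$ that exploits the fact that $2\in\B$ forbids the block $[11]$ in every point of $X_\eta$.

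As in Theorem \ref{mainth}, I would use that $\nu_\eta$ is non-atomic and has full support on $X_\eta$, so $D_{\nu_\eta}=D=d$ and $D^\phi_\kappa=D^\phi$. For each $n$, I would choose $C_n\in\mathcal{L}_n(X_\eta)$ attaining $\#_1 C_n=\max_{W\in\mathcal{L}_n(X_\eta)}\#_1 W$; full support of $\nu_\eta$ gives $\nu_\eta(C_n)>0$, and since $C_n$ is maximal Lemma \ref{JMlem1.1} collapses to $\kappa(C_n)=\nu_\eta(C_n)(1-p)^{\#_1 C_n}$. Subadditivity of the sequence $n\mapsto\max_{\mathcal{L}_n(X_\eta)}\#_1 W$ also yields $\#_1 C_n\ge nd$ for every $n$.

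The key step is the identity
\[
S_n\phi(y)=a_{00}(n-1)+(a_1+a_{01}-2a_{00})\,\#_1 C_n+R(y),\qquad y\in C_n,
\]
with $|R(y)|$ bounded by a constant depending only on $a_{00},a_{01},a_1$. This follows from counting the pairs $y[i,i+1]$ for $i=0,\dots,n-1$: since $[11]$ never occurs, every $1$ in $C_n$ is preceded by $0$ (except possibly at position $0$) and followed by $0$ (except possibly at position $n-1$), so the numbers of $[01]$ and $[10]$ pairs in $C_n$ each equal $\#_1 C_n$ up to boundary corrections, and therefore the number of $[00]$ pairs equals $n-1-2\#_1 C_n$ up to boundary. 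Combined with the hypothesis $a_1+a_{01}-2a_{00}\ge 0$ and $\#_1 C_n\ge nd$, this gives $S_n\phi(y)\ge nD^\phi+O(1)$, where $D^\phi=a_{00}(1-2d)+d(a_1+a_{01})$, and a direct calculation using Proposition \ref{PonX} yields the crucial identity $P-D^\phi=-d\log(1-p)$.

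Assuming $\kappa$ is a Gibbs measure for $\phi$ with constant $c$, the bound $(1-p)^{\#_1 C_n}\le(1-p)^{nd}$ together with the estimate above gives
\[
c^{-1}\le\kappa(C_n)\cdot 2^{nP-S_n\phi(y)}\le\nu_\eta(C_n)\cdot 2^{nd\log(1-p)+n(P-D^\phi)+O(1)}=\nu_\eta(C_n)\cdot 2^{O(1)},
\]
so $\nu_\eta(C_n)$ is bounded below by a positive constant independent of $n$. Lemma \ref{JMlem2} then forces $\nu_\eta$ to have an atom, contradicting its non-atomicity. The main technical delicacy is insisting that all boundary corrections (from $C_n[0]$, $C_n[n-1]$, and the undetermined $y_n$) are absorbed into a uniform $O(1)$ and not merely an $o(n)$, since that uniformity is exactly what produces the positive constant lower bound on $\nu_\eta(C_n)$ that Lemma \ref{JMlem2} requires.
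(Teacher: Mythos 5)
Your proposal is correct and follows essentially the same route as the paper's proof: test the Gibbs inequality on the maximal-ones words $C_n$, use $\kappa(C_n)=\nu_\eta(C_n)(1-p)^{\#_1C_n}$ together with $\#_1C_n\ge nd$, the sign condition $a_1+a_{01}\ge 2a_{00}$, and $P=a_{00}(1-2d)+d(a_1+a_{01})-d\log(1-p)$ to force $\nu_\eta(C_n)\ge c^{-1}2^{-O(1)}$, then contradict non-atomicity via Lemma \ref{JMlem2}. The only (harmless) difference is that you bound $S_n\phi(y)$ for $y\in C_n$ from below directly by the pair-counting identity, whereas the paper compares $S_n\phi(y)$ to $S_n\phi(x^{(n)})$ for a sup-achieving $x^{(n)}$ and invokes $D^\phi=d^\phi$ together with the estimate $d^\phi\ge a_{00}(1-2d)+d(a_1+a_{01})$; both yield the same uniform $O(1)$ error.
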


\begin{proof}
  Since $\kappa$ is full support on $X_\eta$, $D^{\phi}_\kappa=D^{\phi}$. So for any $n\in\N$, there exists $x^{(n)}\in X_\eta$ such that
$$\sum_{i=0}^{n-1}\phi(S^ix^{(n)})
=\sup_{y\in X_\eta}\sum_{i=0}^{n-1}\phi(S^iy)
\ge nD^{\phi}.$$

Since $\nu_\eta$ is full support on $X_\eta$, $D_{\nu_\eta}=D$. So for any $n\in\N$, there exists $C_n\in \mathcal{L}_n(X_\eta)$ such that
$\#_1C_n=\max_{W\in \mathcal{L}_n(X_\eta)}\#_1W\ge nD=nd$.

Let $A_n=\#_1C_n-\#_1x^{(n)}[0,n-1]\ge 0$.

Let $|\phi|=\sup_{x\in X_\eta}|\phi(x)|$. For any $W\in\mathcal{L}_n(X_\eta)$ and $x\in W$, we have
$$\sum_{i=0}^{n-1}\phi(S^ix)\le a_{1}\#_1W+a_{01}\#_1W+a_{00}(n-2\#_1W)+2|\phi|,$$
and
$$\sum_{i=0}^{n-1}\phi(S^ix)\ge a_{1}\#_1W+a_{01}\#_1W+a_{00}(n-2\#_1W)-2|\phi|.$$

Now fix $y\in C_n$,
\begin{equation*}
\begin{aligned}
0\le& \sum_{i=0}^{n-1}\phi(S^ix^{(n)})-\sum_{i=0}^{n-1}\phi(S^iy)\\
\le& (a_1+a_{01})\#_1x^{(n)}[0,n-1]+a_{00}(n-2\#_1x^{(n)}[0,n-1])+2|\phi|\\
&-(a_1+a_{01})\#_1C_n-a_{00}(n-2\#_1C_n)+2|\phi|\\
=&-A_n(a_1+a_{01}-2a_{00})+4|\phi|\\
\le &4|\phi|.
\end{aligned}
\end{equation*}

Therefore, if $\kappa$ is Gibbs measure for $\phi$, then there exists $c>0$ such that
\begin{equation}\label{fin}
\begin{aligned}
c^{-1}\le &\kappa(C_n)\cdot2^{nP-\sum_{i=0}^{n-1}\phi(S^iy)}\\
\le &\kappa(C_n)\cdot2^{nP-\sum_{i=0}^{n-1}\phi(S^ix^{(n)})} \cdot 2^{4|\phi|}\\
\le &\nu_\eta(C_n)\cdot2^{\#_1C_n\log(1-p)}\cdot2^{nP-nd^\phi+4|\phi|}\\
\le &\nu_\eta(C_n)\cdot2^{nd\log(1-p)+nP-nd^\phi+4|\phi|},
\end{aligned}
\end{equation}
noticed that $\kappa(C_n)>0$.
We claim that $P\le -d\log(1-p)+d^\phi$. Since $a_1+a_{01}\ge 2a_{00}$, by inequality (\ref{dphi}),
$$d^\phi\ge a_{00}(1-2d)+d(a_1+a_{01}).$$
By Proposition \ref{PonX},
$$
\begin{aligned}
   P=&a_{00}(1-2d)+d\log(2^{a_1+a_{01}}+2^{2a_{00}})\\
    =&a_{00}(1-2d)+d(a_1+a_{01})-d\log(1-p)\\
    \le & d^\phi-d\log(1-p).
\end{aligned}
$$
Combined with the inequality (\ref{fin}), we have $\nu_\eta(C_n)\ge c^{-1}\cdot2^{-4|\phi|}$.
By Lemma \ref{JMlem2}, $\nu_\eta$ is atomic, which is a contradiction.
\end{proof}

\begin{proof}[Proof of Theorem \ref{t:1.2}]
	It immediately follows from Proposition \ref{p:EqState}, Theorem \ref{t:unique} and Proposition \ref{p:notGibbs}.
\end{proof}

\begin{remark}
For $\phi\in\mathscr{C}_n\setminus\mathscr{C}_2$ where $n\ge 3$, we do not know whether $\nu_\eta*B_{p,1-p}$ for some $p$ can be the equilibrium state for such $\phi$.
\end{remark}

\section{Acknowledgements}
The second author was supported by NNSF of China (11671208 and 11431012). We would
like to express our gratitude to Tianyuan Mathematical Center in Southwest China, Sichuan
University and Southwest Jiaotong University for their support and hospitality.

\end{document}